\documentclass{article}
\usepackage[utf8]{inputenc}
\usepackage
[T1]{fontenc}
\usepackage{microtype}                  
\usepackage{lmodern}                    
\usepackage{longtable}                  
\usepackage{caption}
\usepackage[font=small]{caption}
\usepackage{ltcaption}

\usepackage[inline]{enumitem}           

\usepackage[
  top=20mm, bottom=20mm,
  left=20mm, right=20mm
]{geometry}
\usepackage{setspace}

\usepackage{amsmath, amssymb, amsthm}
\usepackage{mathtools}
\usepackage{bm}                         

\usepackage{booktabs}
\usepackage{siunitx}
\usepackage{algpseudocode}
\usepackage{booktabs}  
\usepackage{siunitx}
\usepackage{multirow}
\usepackage{tabularx}
\usepackage{longtable}
\usepackage{array}
\usepackage{graphicx}
\usepackage{subcaption}
\usepackage{float}
\usepackage{tikz}
\usetikzlibrary{shapes.geometric, arrows.meta, positioning, calc}

\usepackage[table]{xcolor}
\definecolor{headerblue}{RGB}{13, 71, 161}
\definecolor{rowgray}{RGB}{242, 242, 242}

\usepackage[numbers, sort&compress]{natbib}
\usepackage[colorlinks=true,
            linkcolor=headerblue,
            citecolor=headerblue,
            urlcolor=headerblue]{hyperref}

\usepackage{enumitem}

\usepackage{lipsum}                     
\usepackage{comment}                    
\usepackage{algorithm}
\usepackage{algpseudocode}

\algrenewcommand\algorithmicrequire{\textbf{Input:}}
\algrenewcommand\algorithmicensure{\textbf{Output:}}
\theoremstyle{plain}
\newtheorem{theorem}{Theorem}[section]
\newtheorem{lemma}[theorem]{Lemma}
\newtheorem{corollary}[theorem]{Corollary}
\newtheorem{proposition}[theorem]{Proposition}
\newtheorem{conjecture}[theorem]{Conjecture}
\newtheorem{definition}[theorem]{Definition}
\newtheorem{example}[theorem]{Example}

\theoremstyle{remark}
\newtheorem{remark}[theorem]{Remark}

\title{S-LCG: Structured Linear Congruential Generator-Based Deterministic Algorithm for Search and Optimization}
\author{Ahmed Qasim Mohammed, Haider Banka\footnote{Ahmed Qasim Mohammed is with the Department of Computer Science and Engineering, IIT Dhanbad, India, email:22dr0278@iitism.ac.in. Haider Banka is with the Department of Computer Science and Engineering, Indian Institute of Technology Dhanbad, India, email: haider@iitism.ac.in. Anamika Singh is with the Department of Mathematics and Computing, Indian Institute of Technology Dhanbad, India, email: anamikabhu2103@gmail.com.
Corresponding author: Haider Banka.}, and Anamika Singh }
\date{ }
\begin{document}

\maketitle
\begin{abstract}
    This study presents a novel deterministic optimization algorithm based on a special variant of the Linear Congruential Generator (LCG). While conventional algorithms generally operate within the search space, the introduced technique follows a two-level architecture. In particular, an external loop that adaptively balances between exploration and exploitation, while the internal loop evaluates solutions. It is motivated by the intrinsic structure of the generator, the reason behind naming it the Structured Linear Congruential Generator (S-LCG). which enjoys a number of unique characteristics as follows:
1) a memoryless scheme, which ensures non-overlapping sequences based on distinct seeds, thus ensuring no evaluation redundancy;
2) bit splitting representation, which converts LCG states into multi-dimensional points to overcome the Marsaglia lattice effect;  
3) adaptive exploration-exploitation of the generator space, which leads to implicit optimization of the surrogate smooth objective function; and 
4) constant information gathering speed to avoid the problem of premature convergence.
    Extensive testing on 26 benchmark functions across dimensions $d = 2$ to $30$ demonstrates that S-LCG comes within 1\% of the global optimum in 83.3\% of 138 cases (100\% at $ d = 2$, 81.2\% at $ d = 30$) while the nearest competitor GA achieved 75.4\%. Statistical validation shows that S-LCG outperforms eight cutting-edge binary algorithms. Furthermore, its practical value is confirmed by validation on three constrained engineering design problems. In the end, S-LCG offers an optimization framework that is strictly reproducible and requires only one sensitive parameter to be tuned.
\end{abstract}
\textbf{Keywords:} deterministic optimization, linear congruential generator, meta-heuristic, disjoint orbits, zero redundancy, surrogate function, bit-splitting encoding, constrained optimization

\section{Introduction}\label{sec:intro}

    Multimodal continuous functions, constrained engineering design problems, and NP-hard combinatorial problems all require optimization due to the absence of methods that can achieve provable optimality within a reasonable timeframe. This gap enabled meta-heuristic algorithms to thrive as versatile algorithmic frameworks that provide satisfactory high-quality solutions to complex problems~\cite{SorensenGlover2013}.  Lourenço et al.~\cite{Lourenco2003} argue that the best meta-heuristic is one that can be used "without any problem-dependent knowledge". In the context of defining optimization frameworks, two criteria must be met: generalization across a broad spectrum of problems and a minimal number of parameters to configure. Even after decades of development, three fundamental problems remain common to all popular meta-heuristic methods, regardless of whether they are inspired by evolutionary computation, swarm intelligence, or other natural phenomena.
    
    First, \textbf{non-reproducibility} which stems from reliance on the seed that underpins the mechanism for randomness, as true randomness is prohibitively costly. This results in dependence on pseudo-random number generators (PRNG) to furnish the necessary stochastic elements, because the search path is highly sensitive to the initial conditions, such as mutation, crossover, and velocity. This necessitates running algorithms 30 times or more and reporting statistical aggregates ~\cite{Derrac2011,BartzBeielstein2010}, thereby elevating computational costs and complicating fair comparisons with other options. It is noteworthy that reproducing occurs using fixed seed ~\cite{Knuth1997,Luke2013}, although this may restrict the capacity to identify acceptable solutions, as this class of algorithms is designed with stochastic requirements. Second limitation that the algorithms are subject to \textbf{convergence-induced redundancy}, as they generate subsequent candidate solutions from existing ones, which may result in the repetition of previously evaluated solutions. This phenomenon occurs due to the clustering of members in similar regions of the search space, thereby constraining the exploration of new areas. The existing paradigm cannot ensure the absence of redundant samples, and even if it could, it would require massive computational overhead.
    The last is \textbf{parameter sensitivity}. The vast majority of competitive meta-heuristic algorithms use at least two or three parameters that need to be tuned to their optimum values according to the problem at hand. The need for parameter fine-tuning negates the very idea of problem-independent optimization.

    These limitations naturally lead to the question: \emph{can one design an optimizer that retains the adaptive exploration-exploitation balance that makes meta-heuristics effective while being deterministic, minimally parametrized, and structurally non-redundant by construction rather than by heuristic patch?} Deterministic alternatives currently in use only partially address this question. While fully reproducible, low-discrepancy quasi-random sequences~\cite{Sobol1967,Halton1960} employ a fixed evaluation order without an adaptive mechanism. Non-redundancy is achieved only by deterministic search techniques such as DIRECT and tabu search; it comes at a computational overhead. Ultimately, no existing framework successfully eliminates all three limitations.

    During the investigation of this question, intriguing findings emerged by repurposing the algebraic cycle structure of the Linear Congruential Generator (LCG), which is a significant pseudorandom number generator (PRNG). LCG was introduced by Lehmer et al.~\cite{Lehmer1951}, and later thoroughly examined by Knuth et al.~\cite{Knuth1997}. It is characterized by recurrence relation. The recurrence relation has multiple variations, which give it a role of backbone for many PRNGs, such as RANDU introduced by IBM~\cite{Entacher2001Collection}. Owing to its simplicity and fast generating speed, it has been extensively utilized in computational science as a source of randomness. The variations of LCG parameters were the inspiration for the Structured Linear Congruential Generator (S-LCG), a novel deterministic optimization algorithm designed to address the mentioned limitations, drawing from the relationships governing the arrangement of distribution points in d-dimensional space.

    S-LCG's path is completely determined by its parameter settings because it never uses a random draw, but it leverages the implicit random generation with a modular effect. There is no initial seed to initialize or record. Two executions with the same settings will always follow the same search sequence, independent of the underlying hardware or computational environment. This is a stronger guarantee than seed-fixing because the algorithm is not only reproducible but also seed-independent by design. A single run fully describes how it works. Together, these design choices provide a set of algorithmic guarantees that are rarely found in the optimization literature. The primary contribution presented in this paper is to present a fully deterministic, seed-independent, two-level optimization framework that uses adaptive step-size control in the generator space instead of the solution space. Transforming LCG states into high-dimensional points via bit splitting yields an architecture that addresses the structural limitations of the Marsaglia lattice. In addition, the architecture provides an algebraic zero-redundancy property without additional memory overhead. Guaranteeing zero redundancy keeps the information acquisition rate steady over visited generators. As support to the theoretical contributions a 26 classical benchmark functions~\cite{Yao1999,Yang2010TestProblems} in (eight dimensions) and another well-known constrained engineering design problems, robustly supported by Wilcoxon signed-rank and Friedman statistical analyses.

    The rest of the paper is arranged as follows. Section~\ref{sec:related} reviews related work on stochastic meta-heuristics and LCG-based methods.The mathematical foundation with findings introduced in Section~\ref{sec:math}. Section~\ref{sec:theory} analyses the surrogate function with an empirical example. Section~\ref{sec:algo} presents the algorithm. Section~\ref {sec:experiments} presents the experimental setup and results. Lastly, section~\ref{sec:conclusion} concludes the paper and discusses future directions.
    
\section{Related Work}\label{sec:related}

     In literature, meta-heuristic algorithms are primarily divided into two main subdivisions. One of them is the \textbf{trajectory-based methods}, which focus on perturbing a single solution at each iteration and then applying a selection mechanism that determines whether to adopt a newly discovered solution or continue manipulating the current one until certain criteria are met. This iterative process has led to the development of well-known algorithms like simulated annealing~\cite{Kirkpatrick1983, Cerny1985}, tabu search~\cite{Glover1986}, variable neighborhood search~\cite{MladenovicHansen1997}, and iterated local search~\cite{Lourenco2003}. In contrast, another group of algorithms is more sophisticated, as they exploit the implicit parallelism of the currently evaluated multiple solutions to determine the search direction. Which known as \textbf{population-based algorithms}, that inspired by natural phenomena or a natural relationships known under the umbellar of evolutionary strategies most famous genetic algorithms ~\cite{Holland1975, Goldberg1989, Mitchell1996}, differential evolution~\cite{StornPrice1997, PriceStornLampinen2005}, and evolutionary programming~\cite{Fogel1966,Fogel1995}, on the other hand the algorithms inspired by swarm intelligence techniques such as particle swarm optimization~\cite{KennedyEberhart1995,Poli2007} and ant colony optimization~\cite{Dorigo1996,DorigoGambardella1997,DorigoStutzle2004}. Additionally, it also encompasses methodologies that symbolize physical or chemical interactions, such as central force optimization~\cite{Formato2007, Formato2009OPSEARCH}, gravitational search~\cite{Rashedi2009}, and chemical reaction optimization~\cite{LamLi2010, Lam2012}. While in the last decade nature has become the new inspiration, resulting in many algorithms, including grey wolf optimizer~\cite{GWO}, whale optimization~\cite{WOA}, ant lion optimizer~\cite{ALO}, spotted hyena optimizer~\cite{SHO}, grasshopper optimization~\cite{GOA}, Harris hawks optimization~\cite{HHO}, slap swarm algorithm~\cite{SSA}, mountain gazelle optimizer~\cite{MGO}, and Arctic puffin optimization~\cite{APO24}. However, even though many algorithms reported in the literature have proven their ability to tackle complex problems, they do not offer a definitive solution to the three limitations discussed in the introduction. In addition to that, their assertion that the algorithm should be developed is reinforced by the \textbf{No Free Lunch theorems}~\cite{WolpertMacready1997}, which confirm no single meta-heuristic can excel across all function classes. This motivates more investigation into fundamentally distinct search mechanisms, rather than merely variations in the existing nature-inspired paradigm. 
 
    One of the primary concerns identified with the present algorithms is \textbf{convergence-induced redundancy}, which leads to reevaluations of the function due to clustering of existing solutions in previously sampled regions. Techniques such as \textbf{diversity preservation} and \textbf{adaptive parameter mechanisms} are used to mitigate the issue, but they cannot completely eliminate it, as neither addresses the fundamental reality that population-based search lacks a mechanism to ensure non-overlap of evaluated regions.

    LCG, which is one of the main components of the proposed algorithm, has been typically used as a stochastic basis for providing random samples for initialization, selection, mutation, and velocity updates in the existing algorithms. Maucher et al.~\cite{Maucher2011} investigated the sensitivity of various meta-heuristic algorithms with respect to different LCGs. He primarily focused on LCGs with long cycles, as they ensured better-quality randomness. He found that LCGs with a short period and poor spectral properties tend to degrade some algorithms.

    Conversely, meta-heuristic algorithms have also been used to identify optimal parameters for LCGs. Park and Miller~\cite{ParkMiller1988} introduced the "minimal standard" LCG as a benchmark for acceptable statistical quality. The objective in both cases, whether employing the LCG as a randomness source or refining it to improve randomness, is to enhance its effectiveness in generating randomness; its role remains that of a passive tool, fundamentally interchangeable with any other pseudo-random generator of comparable quality.

    One of the major setbacks one faces with respect to LCGs is when they are used in high dimensions. Marsaglia in his paper~\cite{Marsaglia1968} established that $d$-tuples generated from successive LCG outputs occupy at most $\lfloor(d!\cdot m)^{1/d}\rfloor$ parallel hyperplanes in $d$-dimensional space, with coverage gaps between hyperplanes being inaccessible irrespective of seed selection or the quantity of outputs produced. Knuth et al. conducted a comprehensive examination of this lattice structure, whereas the spectral test~\cite{LEcuyer1999} provided a conventional assessment of its severity. However, neither could overcome this limitation.

    Deterministic algorithms have been another direction of research due to their importance in ensuring reproducibility. In their work, Sobol~\cite{Sobol1967}, Halton~\cite{Halton1960}, and Hammersley proved that low-discrepancy sequences fill the unit hypercube more evenly than pseudo-random samples, achieving a discrepancy of order $O((\log N)^d / N)$ in $d$ dimensions~\cite{Niederreiter1992}. Low discrepancy sequences have primarily been utilized to initialize population-based methods~\cite{Maaranen2004} or as the foundation for direct search sampling, as they inherently provide reproducibility and uniform coverage, addressing two of the three criteria. However, they still lack an adaptive mechanism, as the evaluation sequence is predetermined and does not adjust based on information acquired during the search, thereby failing to leverage the exploration-exploitation trade-off fundamental to contemporary meta-heuristics.

    Consequently, due to the absence of adaptive mechanisms, an alternative approach was developed that emphasizes step-size control. Specifically, algorithms like evolution strategies use adaptive mechanisms, such as a 1/5th success rule; similarly, CMA-ES \cite{CMA-ES} uses covariance matrix adaptation. Another algorithm that deploys adaptive variable neighborhood search~\cite{MladenovicHansen1997}, which employs a similar combinatorial approach by systematically altering the neighborhood structure. However, the adaptive step dictates the solution space, generating the subsequent candidate and, in turn, revisiting the same region.

    Based on the optimization methodology developed so far, two main issues have been identified from the literature review. First, the literature considers all three principal conditions separately. No singular methodology simultaneously satisfies all three. Quasi-random algorithms are deterministic and uniformly encompass the state space but lack adaptability. Stochastic meta-heuristics are adaptable yet neither deterministic nor structurally non-redundant; conversely, deterministic adaptive algorithms like DIRECT and tabu search are both deterministic and adaptable, attaining non-redundancy through partitioning or memory rather than inherently. Secondly, although Linear Congruential Generators have been extensively analyzed for randomness and optimized to enhance statistical properties, their algebraic characteristics, specifically how seed variation partitions the state space into distinct cycles, have not been utilized as an independent optimization algorithm. The next section examines and applies the algebraic properties of the problem.

\section{Mathematical Foundations}\label{sec:math}
    This section establishes all the mathematical results regarding $S$-$LCG$, which are required for developing the proposed meta-heuristic algorithm. The section discusses the theorems and proofs that support the algorithm's claims for handling the limitations of existing metaheuristic algorithms.
    The standard notations used throughout this paper are defined as follows:
\begingroup
\small 
\begin{description}[
style=multiline, leftmargin=2cm, font=\normalfont,
itemsep=0pt,
parsep=0pt,
topsep=4pt
]
    \item[$S$] Binary string.
    \item[$n$] Total number of bits used in the representation of the encoded number.
    \item[$d$] Dimensionality of the problem, i.e., the number of decision variables.
    \item[$b$] Number of bits assigned to each variable under the bit-splitting scheme.
    \item[$m$] Modulus defining the size of the state space.
    \item[$a,\; c$] Multiplier and Incremental values in LCG recurrence
    \item[$\alpha_0$] Initial generator.
    \item[$\alpha_k$] Intermediate generator.
    \item[$\alpha_{\max}$] last generator in space of $2^n$ (see Conjecture~\ref{conj:alphamax}).
    \item[$\delta$] Step size used during exploitation; fixed to $2$ in this work.
    \item[$\Delta$] Step size controlling exploration, and the primary sensitive parameter in the method.
    \item[$S_{\max}$] Maximum allowed stagnation count before triggering exploration (typically set to 5000).
    \item[$E_{\max}$] Budget for exploitation steps per improving generator (default value: 60).
    \item[$G$] Set of generators, defined as $G = \{\alpha_0, \alpha_1, \ldots, \alpha_{\max}\}$.
    \item[$g(\alpha)$] Surrogate objective defined as $g(\alpha) = \min_{i} f(\mathbf{x}_i(\alpha))$.
    \item[$N_{\text{pop}}$] Population size in a population-based meta-heuristic algorithm.
    \item[$T_{\max}$] Number of maximum iterations in a population-based meta-heuristic algorithm.
\end{description}
\endgroup

 \subsection{LCG Background}\label{sec:lcg}
 
    \begin{definition}[Linear Congruential Generator]\label{def:lcg}
        A Linear Congruential Generator (LCG) is a recursive modular function defined over residue classes modulo $m$ ($\mathbb{Z}_m$) as follows:
    \begin{equation}\label{eq:lcg}
        x_{n+1} = (a \cdot x_n + c) \bmod m, \quad n \geq 0,
    \end{equation}
    where $m$ is a positive integer and is called the modulus, $a$ is the multiplier ($0 < a < m$), $c$ is the increment ($0 \leq c < m$), and $x_0$ is the initial seed.
    \end{definition}

    In this study, the LCG parameters are restricted to $m = 2^n+1$, where $a = 2$, and $c = 1$ are used for all the experiments and referred to as the Structured Linear Congruential Generator or S-LCG in short. These parameters have been carefully chosen in order for the LCG to satisfy certain properties that enable the design of the desired algorithm (Algorithm \ref{sec:algo}). The traditional LCG's parameters were so chosen that it optimized the period length, ideally reaching a full period in accordance with the Hull-Dobell conditions~\cite{HullDobell1962}, whereas S-LCG necessitates the contrary. It requires a configuration that violates the full-period conditions, leading to a state space that disintegrates into numerous brief, disconnected cycles. Every cycle turns into a separate search path that doesn't overlap with any other path. The algebraic guarantee of disjointness (Theorem~\ref{lem:disjoint}) turns what is usually seen as a flaw in the generator into the main part of the optimization algorithm. For the sake of completeness, S-LCG is formally defined as follows:

    \begin{definition}[Structured Linear Congruential Generator (S-LCG)]\label{def:slcg}
        For $n\in \mathbb{N}$, a \emph{Structured Linear Congruential Generator} is defined as a linear congruential generator with the following recurrence relation:
        \begin{equation}\label{eq:slcg}
            x_{k+1} = (2 \cdot x_k + 1) \pmod{2^n+1}, \quad k \geq 0,
        \end{equation}
    \end{definition}

    The LCG generates a series of integers from $\{0, 1, \ldots, m-1\}$ based on an initial seed $x_0$. Because $m$ is finite, the sequence will eventually repeat, creating a \emph{cycle} with a length known as the \emph{period}.
    
    \begin{definition}[Period of an LCG]\label{def:period}
        The \emph{period} of the sequence $\{x_n\}$ generated by Equation~\eqref{eq:lcg} is the smallest positive integer $p$ such that $x_{n+p} \equiv x_n \pmod{m}$ for all sufficiently large $n$.
    \end{definition}
 
    The Hull-Dobell theorem~\cite{HullDobell1962} establishes that an LCG achieves full period (period $= m$) if and only if: (i)~$\gcd(c, m) = 1$; (ii)~$a \equiv 1 \pmod{p}$ for every prime factor $p$ of $m$; and (iii)~if $4 \mid m$, then $a \equiv 1 \pmod{4}$. One can easily verify that the S-LCG violates each of the three Hull-Dobell full-period conditions. In fact, the next theorem (Theorem \ref{thm:cycle_length}) shows that the period of S-LCG is at most $2n$.

    \begin{theorem}\label{thm:cycle_length}
        The period of the sequence generated by any generator is at most $2n$.
    \end{theorem}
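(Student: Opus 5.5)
The plan is to conjugate the affine map into a pure multiplication, so that the period becomes a multiplicative order modulo $m=2^n+1$, and then bound that order by $2n$. Write $m=2^n+1$ and $T(x)=(2x+1)\bmod m$. The fixed point of $T$ satisfies $2x^*+1\equiv x^*$, i.e.\ $x^*\equiv -1\equiv 2^n \pmod m$. Set $y_k=x_k+1$. Then
\[
y_{k+1}=x_{k+1}+1\equiv 2x_k+2=2y_k \pmod m,
\]
so by induction $y_k\equiv 2^k y_0 \pmod m$. Consequently $x_{k+p}\equiv x_k$ for all $k$ if and only if $(2^p-1)\,y_0\equiv 0\pmod m$.

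The key step is the observation $2^n\equiv -1 \pmod m$. Squaring gives $2^{2n}\equiv 1\pmod m$, so $m \mid 2^{2n}-1$. Hence for every seed $x_0$ we have $(2^{2n}-1)y_0\equiv 0$, and therefore $x_{k+2n}=x_k$ for all $k\ge 0$. The sequence is in fact purely periodic from $k=0$, since $T$ is a bijection ($2$ is invertible mod the odd number $m$). The smallest such $p$, which is the period of Definition~\ref{def:period}, therefore satisfies $p\le 2n$. More precisely, $p$ divides $2n$, because the set of $p$ with $(2^p-1)y_0\equiv 0$ is the set of multiples of the order of $2$ in $(\mathbb{Z}_{m/\gcd(m,y_0)})^\times$.

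For completeness, I will note the degenerate case $y_0\equiv 0$, i.e.\ $x_0=2^n$, which is the fixed point and has period $1\le 2n$. I may also remark that for $y_0$ coprime to $m$ the period equals exactly the order of $2$ modulo $m$. That order is exactly $2n$ for $n\ge 2$, since $2^j<2^n+1$ for $j\le n$, and for $n<j<2n$ we have $2^j\equiv -2^{j-n}\not\equiv 1$. This shows the bound is attained, but it is not needed for the statement.

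The only real obstacle is interpretive rather than mathematical. "Any generator" must be read as "any seed $x_0\in\{0,\dots,2^n\}$ of the S-LCG of Definition~\ref{def:slcg}". I will state that reading explicitly at the start, after which the argument above is short and routine.
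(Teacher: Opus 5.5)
Your proposal is correct and follows the paper's own argument. The substitution $y_k = x_k+1$, giving $y_k \equiv 2^k y_0$, is equivalent to the paper's closed form $x_k \equiv 2^k x_0 + (2^k-1) \pmod{2^n+1}$. It leads to the same period condition $(2^p-1)(x_0+1)\equiv 0 \pmod{2^n+1}$ and the same key fact $2^{2n}\equiv 1$.

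Where you differ, it is in your favour. You close by noting directly that $p=2n$ satisfies the condition, so the minimal period is at most $2n$ and in fact divides it. This is cleaner than the paper's wording: the paper says $p$ is a multiple of the order and concludes $p \mid 2n$, which only holds because the minimal $p$ equals the order. You also treat the fixed point $x_0=2^n$, of period $1$, explicitly.
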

    
    \begin{proof}
    The recurrence relation corresponding to the S-LCG general equation (\ref{eq:slcg}) and the closed-form solution for the \( k \)-th term is:
    \begin{equation} \label{eq1}
        x_k = 2^k x_0 + (2^k - 1) \mod (2^n + 1)
    \end{equation}
    Equation \ref{eq1} can be easily verified by induction. The base case \( k=0 \) is trivial. Assuming it holds for \( k \), then:
    \begin{align*}
    x_{k+1} &= 2x_k + 1 \\
            &= 2\left[2^k x_0 + (2^k - 1)\right] + 1 \\
            &= 2^{k+1}x_0 + 2^{k+1} - 2 + 1 \\
            &= 2^{k+1}x_0 + (2^{k+1} - 1),
    \end{align*}
    which confirms the inductive step.
    Let \( p \) be the period for a given seed \( x_0 \), so \( x_p = x_0 \). Applying \ref{eq1}, one gets:
    \[
        2^p x_0 + (2^p - 1) \equiv x_0 \pmod{2^n + 1}.
    \]
    Rearranging terms gives the fundamental period condition:   
    \begin{equation} \label{eq1b}
         (2^p - 1)(x_0 + 1) \equiv 0 \pmod{2^n + 1}
    \end{equation}
    Let \( d = \gcd(x_0 + 1, 2^n + 1) \). Equation \ref{eq1b}, it follows that \( \frac{2^n + 1}{d} \mid (2^p - 1) \), which is equivalent to:    
    \begin{equation} \label{eq2}
        2^p \equiv 1 \pmod{\frac{2^n + 1}{d}}
    \end{equation}
    Let \( \text{ord}_N(2) \) denote the multiplicative order\footnote{The \emph{multiplicative order} $\operatorname{ord}_N(a)$ is the smallest positive integer $k$ such that $a^k \equiv 1 \pmod{N}$, where $\gcd(a, N) = 1$.} of 2 modulo \( N \). Equation \ref{eq2} implies that \( p \) is a multiple of \( \text{ord}_{\frac{2^n+1}{d}}(2) \).
    It can be easily be seen that \( \text{ord}_{2^n+1}(2) = 2n \), since \( 2^{2n} = (2^n)^2 \equiv (-1)^2 \equiv 1 \pmod{2^n+1} \), and no smaller positive exponent equals 1. The order of 2 modulo any divisor of \( 2^n+1 \) must divide \( 2n \). Therefore, \( \text{ord}_{\frac{2^n+1}{d}}(2) \mid 2n \), and hence the period \( p \mid 2n \). This proves that the period for \emph{any} seed is \emph{at most} \( 2n \).
    \end{proof}
    
    \begin{remark}
        The upper bound \( 2n \) is achievable. From the above, the period equals \( 2n \) if \( \text{ord}_{\frac{2^n+1}{d}}(2) = 2n \). This occurs when \( d = 1 \), i.e., when \( \gcd(x_0 + 1, 2^n + 1) = 1 \). In this case, (2) implies \( 2^p \equiv 1 \pmod{2^n + 1} \), forcing \( p = 2n \).
    \end{remark}

    \noindent\textbf{Note:}
        A simple seed satisfying this condition is \( x_0 = 0 \), since \( \gcd(1, 2^n + 1) = 1 \). Therefore, the S-LCG with seed \( x_0 = 0 \) achieves the maximum period \( 2n \).

\subsection{Properties of \texorpdfstring{S-LCG}{S-LCG}}\label{sec:properties slcg}
    This section discusses some other significant characteristics of S-LCG. The ability of S-LCG to cover discrete points in the search space has been demonstrated. This is demonstrated in two stages: first, it is established that S-LCG partitions $\mathbb{Z}_m$ into a set of disjoint cycles. (Lemma~\ref{lem:disjoint}); second, it is illustrated that designating the minimal element of each cycle as its \textit{generator} produces a unique representative system (Theorem~\ref{thm:unique_rep}).
    

    \begin{lemma}\label{lem:disjoint}
        The map $f$ associated with S-LCG partitions the state space $\mathbb{Z}_m$ into a collection of disjoint cycles.
    \end{lemma}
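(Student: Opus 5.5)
The plan is to show that the map $f:\mathbb{Z}_m\to\mathbb{Z}_m$, $f(x)=(2x+1)\bmod (2^n+1)$ with $m=2^n+1$, is a bijection. A bijection of a finite set automatically decomposes that set into disjoint cycles. So the argument has two steps: invertibility of $f$, then the standard orbit decomposition of a permutation.

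\textbf{Step 1 (bijectivity).} Since $m=2^n+1$ is odd, $\gcd(2,m)=1$, so $2$ has a multiplicative inverse modulo $m$. Explicitly, $2^{-1}\equiv 2^{n-1}+1 \pmod m$, because $2(2^{n-1}+1)=2^n+2\equiv 1$. Hence
\[
f^{-1}(y)=2^{-1}(y-1)\bmod m
\]
is a two-sided inverse of $f$. Therefore $f$ is a permutation of $\mathbb{Z}_m$. Equivalently, $f$ is injective on a finite set, since $2x+1\equiv 2x'+1$ implies $x\equiv x'$, and an injective self-map of a finite set is bijective.

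\textbf{Step 2 (orbits are cycles).} Define $x\sim y$ iff $y=f^k(x)$ for some $k\ge 0$. I will check that this is an equivalence relation on $\mathbb{Z}_m$.
\begin{itemize}
\item Reflexivity holds trivially, with $k=0$.
\item Transitivity follows by composing powers of $f$.
\item Symmetry needs bijectivity. The group generated by $f$ in $\mathrm{Sym}(\mathbb{Z}_m)$ is finite, so $f^{p}=\mathrm{id}$ for some $p\ge1$. Concretely, Theorem~\ref{thm:cycle_length} and the closed form \eqref{eq1} give $f^{2n}=\mathrm{id}$, since $2^{2n}\equiv1$ makes $x_{2n}=x_0$ for every seed. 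Thus $y=f^k(x)$ implies $x=f^{2n-k\bmod 2n}(y)$.
\end{itemize}
The equivalence classes partition $\mathbb{Z}_m$.

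Next I will show that each class is a genuine cycle rather than a tail leading into a cycle (a "rho" shape). Take the class of $x_0$ and let $p$ be the least positive integer with $f^p(x_0)=x_0$; such a $p$ exists because $f^{2n}(x_0)=x_0$. The elements $x_0,f(x_0),\dots,f^{p-1}(x_0)$ are then pairwise distinct. Indeed, $f^i(x_0)=f^j(x_0)$ with $0\le i<j<p$ would, after applying $f^{-i}$, give $f^{j-i}(x_0)=x_0$, contradicting the minimality of $p$. The map $f$ permutes these $p$ elements cyclically, and this finite set is exactly the class of $x_0$. Distinct classes are disjoint because they are equivalence classes. This yields the claimed partition, with each cycle's length dividing $2n$.

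The only step with real content is Step 1, the invertibility of $2$ modulo $2^n+1$. It matters because a non-injective LCG (for example, with $\gcd(a,m)>1$) produces trees feeding into cycles, and then the state space is not partitioned into disjoint cycles. Once bijectivity is established, the rest is the standard cycle decomposition of a permutation. Reusing $f^{2n}=\mathrm{id}$ from Theorem~\ref{thm:cycle_length} makes the symmetry step immediate.
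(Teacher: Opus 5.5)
Your proof is correct and follows the same route as the paper. Invertibility of $2$ modulo the odd modulus $2^n+1$ makes $f$ a permutation of $\mathbb{Z}_m$, and the orbits of a permutation partition the finite set into disjoint cycles. Your version adds details the paper leaves implicit: the explicit inverse $2^{n-1}+1$, symmetry of the orbit relation via $f^{2n}=\mathrm{id}$, and the exclusion of tails, so that each class is a genuine cycle whose length divides $2n$.
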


    \begin{proof}
        Let $n \geq 1$, $m = 2^n + 1$, and let $f : \mathbb{Z}_m \to \mathbb{Z}_m$ be the linear congruential map defined by
    \[
        f(x) \;=\; 2x+1 \pmod{m}.
    \]
    Then, $f$ is a bijection. Since $m$ is odd, $\gcd(2, m) = 1$, so $2$ is invertible modulo $m$, and the map $f^{-1} : \mathbb{Z}_m \to \mathbb{Z}_m$ 
    given by
    \[
        f^{-1}(y) \;=\; 2^{-1}(y - 1) \pmod{m}
    \]
    is well-defined and satisfies $f^{-1} \circ f = \mathrm{id}_{\mathbb{Z}_m}$. Hence $f$ is a bijection and therefore induces a permutation on the finite 
    set $\mathbb{Z}_m$. This permutation structure inherently defines an equivalence relation where elements are equivalent if they belong to the same orbit (or cycle) under iteration by \( f \). This, in turn, partitions the search space \( \mathbb{Z}_m \) into a set of disjoint cycles.    
    \end{proof}

    Every element in the cycle produced by $S$-$LCG$ can generate the entire cycle. In order to provide unique coverage, the following theorem establishes a representative system for each cycle that allows the selection of only one element per cycle. This representative is referred to as the generator of the cycle, which prevents repetitions.
    
    \begin{theorem}[Unique Cycle Coverage]\label{thm:unique_rep}
    Let $\{C_i\}$ denote the collection of disjoint cycles generated by $S$-$LCG$, that partition $\mathbb{Z}_m$. Define the 
    \emph{generator} of each cycle $C_i$ as follows:
    \[
        \alpha_i \;=\; \min\, C_i.
    \]
    Then the set of generators $G = \{\alpha_i\}$ constitutes a system of distinct representatives such that an element of $\mathbb{Z}_m$ is associated with precisely one cycle and is accessible from exactly one generator of $S$-$LCG$.
    Moreover, the sequential procedure of scanning $\mathbb{Z}_m$ in increasing order from $0$ and admitting each element not yet visited as a new generator, 
    then marking its entire cycle as visited, is equivalent to enumerating $G$ and guarantees that no cycle is sampled more than once.
    \end{theorem}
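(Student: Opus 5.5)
The plan is to derive both parts of Theorem~\ref{thm:unique_rep} directly from Lemma~\ref{lem:disjoint}. That lemma says $f(x)=2x+1 \bmod m$ is a permutation of $\mathbb{Z}_m$, so the relation $x\sim y \iff y=f^k(x)$ for some $k\ge 0$ is an equivalence relation. Its classes are exactly the cycles $C_i$, and they partition $\mathbb{Z}_m$.

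\textbf{First claim: $G$ is a system of distinct representatives.} I will proceed in three short steps.
\begin{itemize}
\item Each $C_i$ is a nonempty finite subset of $\{0,\dots,m-1\}$, so $\alpha_i=\min C_i$ exists and lies in $C_i$.
\item Since the $C_i$ are pairwise disjoint, $i\neq j$ forces $\alpha_i\neq\alpha_j$. Hence the map $C_i\mapsto\alpha_i$ is injective, and $G$ contains exactly one element from each cycle.
\item Take any $x\in\mathbb{Z}_m$. It lies in a unique cycle $C_i$. Because $C_i$ is a single orbit, and Theorem~\ref{thm:cycle_length} gives finite period $p\mid 2n$, there is a $k<p$ with $f^k(\alpha_i)=x$. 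So $x$ is reachable from $\alpha_i$.
\end{itemize}
Conversely, suppose $x$ is reachable from some $\alpha_j$. Then $x\in C_j$, so $C_j=C_i$ and $j=i$. This gives the ``precisely one cycle, exactly one generator'' statement.

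\textbf{Second claim: the scanning procedure enumerates $G$.} I will prove the following invariant by induction on the scan index $t=0,1,\dots,m-1$:
\begin{quote}
Just before $t$ is examined, the visited set equals the union of all cycles $C_i$ with $\alpha_i<t$.
\end{quote}
The base case $t=0$ is immediate, since the visited set is empty and no generator is negative.

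For the inductive step, assume the invariant holds at $t$. There are two cases.
\begin{itemize}
\item If $t$ is unvisited, then $t$ lies in no cycle whose minimum is below $t$. Its own cycle $C$ therefore has $\min C\ge t$, and since $t\in C$ we get $\min C=t$. So $t\in G$, and the procedure admits it and marks $C$. The invariant then holds at $t+1$.
\item If $t$ is visited, then $t\in C_i$ for some $\alpha_i<t$, so $t\neq\min C_i$ and $t\notin G$. Nothing new is added, and the invariant again holds at $t+1$.
\end{itemize}

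It follows that the admitted elements are exactly the elements of $G$, admitted in increasing order. Each cycle is marked only at the moment its minimum is scanned. Its elements are visited afterwards, so no cycle can be admitted a second time, and no cycle is sampled more than once.

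The only delicate point is the inductive step's use of ``unvisited $\Rightarrow$ minimum of its cycle''. This depends on marking the \emph{entire} cycle when a generator is admitted. I will make explicit that iterating $f$ from $t$ until returning to $t$ does produce all of $C$, which holds because $f$ restricted to the orbit is a single cycle by Lemma~\ref{lem:disjoint}. Everything else is bookkeeping.
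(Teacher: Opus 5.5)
Your proposal is correct and follows essentially the same route as the paper. Both use the partition from Lemma~\ref{lem:disjoint}, take the minimum as each cycle's representative, and observe that the first element of a cycle met in an increasing scan is its minimum; your inductive invariant (visited set $=$ union of the cycles with $\alpha_i<t$) makes the paper's informal version of that last step rigorous, and the appeal to the period bound is unnecessary because $f$ being a permutation of the finite set $\mathbb{Z}_m$ already gives reachability within an orbit.
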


    \begin{proof}

    Since $\{C_i\}$ is a partition of $\mathbb{Z}_m$, every element $x \in \mathbb{Z}_m$ belongs to exactly one cycle $C_i$. By the definition of a cycle under $f$, every element of $C_i$ is reachable from $\alpha_i$ by finitely many iterations of $f$, and 
    from no other generator.

    \textit{Equivalence of the sequential procedure:} Let the scanning procedure be executed as described. An element $x \in \mathbb{Z}_m$ is admitted as a generator if and only if no element of its cycle $C_i$ has been previously admitted. Among all elements of $C_i$, the first to be encountered in increasing order is precisely $\alpha_i = \min\, C_i$. Upon admission, all elements of $C_i$ are marked visited, so no subsequent element of $C_i$ can be admitted, and no element of any other cycle is affected. Consequently, exactly one generator is admitted per cycle, and the admitted set equals $G$. This ensures that each cycle contributes exactly one seed to the enumeration, and no cycle is repeated.
    \end{proof} 
    The generator-selection procedure established in Theorem~\ref{thm:unique_rep} is used to devise the Algorithm~\ref{alg:generator_check} that helps to determine whether an even number in $\mathbb{Z}_m$ is a generator or not.

    \begin{figure}[H]
    \centering
    
    \begin{minipage}[t]{0.4\textwidth}
        \vspace{0pt} 
        \footnotesize 
        
        \hrule height 0.8pt
        \vspace{1pt} 
        \captionof{algorithm}{Check if $\alpha_k$ is a generator}
        \label{alg:generator_check}
        \vspace{-10pt} 
        \hrule height 0.4pt
        
        \begin{algorithmic}[1]
        \Require $\alpha_k$, $n$
        \Ensure $1$ if generator, $-1$ otherwise

        \State $MASK \gets (1 \ll n) - 1$
        \State $complement \gets MASK - \alpha_k$
        \State $state \gets \alpha_k$
        \State $LookUpTable \gets [1,0]$

        \For{$i \gets 0$ to $n$}
            \If{$\alpha_k \le state \le complement$}
                \Return $-1$
            \EndIf
            \State $state \gets ((state \ll 1) \mathbin{\&} MASK) \mid (LookUpTable[(state \gg n-1) \mathbin{\&} 1])$
        \EndFor

        \Return $1$
        \end{algorithmic}
        
        \hrule height 0.8pt
        
    \end{minipage}\hfill
    \begin{minipage}[t]{0.56\textwidth}
        \vspace{0pt} 
        \centering
        \footnotesize 
        \setlength{\tabcolsep}{1.5pt} 
        \renewcommand{\arraystretch}{0.9} 
        
        \begin{tabular}{c|ccccccccccccc}
        \toprule
        \textbf{Gen.} & \textbf{S1} & \textbf{S2} & \textbf{S3} & \textbf{S4} & \textbf{S5} & \textbf{S6} & \textbf{S7} & \textbf{S8} & \textbf{S9} & \textbf{S10} & \textbf{S11} & \textbf{S12} & \textbf{S13} \\
        \midrule
        0  & 1  & 3  & 7   & 15  & 31 & 63 & 127 & 126 & 124 & 120 & 112 & 96  & 64 \\ 
        2  & 5  & 11 & 23  & 47  & 95 & 62 & 125 & 122 & 116 & 104 & 80  & 32  & 65 \\ 
        4  & 9  & 19 & 39  & 79  & 30 & 61 & 123 & 118 & 108 & 88  & 48  & 97  & 66 \\ 
        6  & 13 & 27 & 55  & 111 & 94 & 60 & 121 & 114 & 100 & 72  & 16  & 33  & 67 \\ 
        8  & 17 & 35 & 71  & 14  & 29 & 59 & 119 & 110 & 92  & 56  & 113 & 98  & 68 \\ 
        10 & 21 & 43 & 87  & 46  & 93 & 58 & 117 & 106 & 84  & 40  & 81  & 34  & 69 \\ 
        12 & 25 & 51 & 103 & 78  & 28 & 57 & 115 & 102 & 76  & 24  & 49  & 99  & 70 \\ 
        18 & 37 & 75 & 22  & 45  & 91 & 54 & 109 & 90  & 52  & 105 & 82  & 36  & 73 \\ 
        20 & 41 & 83 & 38  & 77  & 26 & 53 & 107 & 86  & 44  & 89  & 50  & 101 & 74 \\ 
        42 & 85 &    &     &     &    &    &     &     &     &     &     &     &    \\ 
        \bottomrule
        \end{tabular}
        
        \captionof{table}{State transitions for different generators for $n = 7$.}
        \label{tab:generators_states}
    \end{minipage}
\end{figure}

    Algorithm $1$ check is $\alpha_k$ is a generator, mathematically proven (Theorem \ref{thm:unique_rep}), memory-free mechanism that guarantees every valid seed produces a sequence completely disjoint from every other valid seed's sequence. No storage, O(n) cost.

    \begin{corollary}
    \label{cor: zero_redundancy}
        After testing K valid seeds, the total number of unique d-dimensional points evaluated is exactly $2n \cdot K$.
    \end{corollary}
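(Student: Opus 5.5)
The plan is to get the count in three steps: fix the length of each orbit, show the orbits of different valid seeds never share a state, and then show the passage from states to $d$-dimensional points loses nothing.

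\emph{Step 1: orbit length.} I take a \emph{valid seed} to mean a generator $\alpha\in G$ (as certified by Algorithm~\ref{alg:generator_check}) whose cycle has the maximal period. By the Remark following Theorem~\ref{thm:cycle_length}, this holds whenever $\gcd(\alpha+1,2^n+1)=1$. Under that condition the orbit $\alpha, f(\alpha), \dots, f^{2n-1}(\alpha)$ has exactly $2n$ pairwise distinct states. This is because the period is exactly $2n$, so no state repeats before step $2n$.

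\emph{Step 2: no overlap between seeds.} By Lemma~\ref{lem:disjoint}, $f$ is a permutation of $\mathbb{Z}_m$, so its cycles partition $\mathbb{Z}_m$. By Theorem~\ref{thm:unique_rep}, distinct generators are the minima of distinct cycles. Hence the $K$ orbits are pairwise disjoint, and their union contains exactly $2n\cdot K$ distinct states in $\mathbb{Z}_m$.

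\emph{Step 3: from states to points.} Every state in these orbits must lie in $\{0,\dots,2^n-1\}$, i.e.\ be an $n$-bit string. The only residue outside this range is $2^n\equiv -1$. It is a fixed point of $f$, since $2(-1)+1=-1$, so it forms its own cycle of length $1$ and never occurs in a full-period orbit. The bit-splitting encoding with $n=d\cdot b$ cuts an $n$-bit string into $d$ consecutive $b$-bit fields and maps each field to one coordinate. Provided the field-to-coordinate map is strictly monotone (as with the usual affine decoding onto $[l_j,u_j]$), the encoding is injective, because the original string is recovered by concatenating the fields. Therefore distinct states give distinct points, and the $2n\cdot K$ distinct states yield exactly $2n\cdot K$ distinct $d$-dimensional points.

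\emph{Main obstacle.} The delicate step is Step 1: the count is false for generators with short cycles. Table~\ref{tab:generators_states} shows this with generator $42$ for $n=7$. There $\gcd(43,129)=43$, and the cycle is $\{42,85\}$, of length $2$ rather than $2n=14$. So the proof must either build the full-period condition $\gcd(\alpha+1,2^n+1)=1$ into the meaning of ``valid'', or the statement must be weakened to ``at most $2n\cdot K$''. I would first try to argue that the algorithm's seed filter excludes these short-cycle generators. If that fails, I would state the corollary for full-period generators and give the general bound $\sum_i |C_i|\le 2nK$ instead.
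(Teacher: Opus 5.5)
Your three ingredients are the ones the paper relies on. The paper gives the corollary no separate proof: it is read off from the disjointness of cycles (Lemma~\ref{lem:disjoint}, Theorem~\ref{thm:unique_rep}), a cycle length of $2n$, and the ``strict bijection'' of bit-splitting that is later invoked in the proof of Proposition~\ref{prop:inforate}. The difference is that you check the step the paper skips. Theorem~\ref{thm:cycle_length} only shows that the period divides $2n$, so ``exactly $2n\cdot K$'' does not follow. Your counterexample is correct ($n=7$, $\gcd(43,129)=43$, cycle $\{42,85\}$), so the statement is false as written. Your handling of the residue $2^n\equiv-1$ also repairs the paper's bijection claim. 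That residue is a fixed point, so it never lies on an orbit of length $2n$. This matters because $\mathbb{Z}_m$ has $2^n+1$ elements, while only $2^n$ of them are $n$-bit strings.

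Your first plan, arguing that the seed filter excludes short cycles, will not work. Any test certifying $\alpha=\min C(\alpha)$, which is what ``generator'' means in Theorem~\ref{thm:unique_rep}, accepts the minimum of a short cycle. The search window $[0,\alpha_{\max}]$ of Conjecture~\ref{conj:alphamax} does exclude your example. More generally, for every odd $n\ge 3$ there is a $2$-cycle $\{(2^n-2)/3,\,(2^{n+1}-1)/3\}$, because $3\mid 2^n+1$. Its minimum $(2^n-2)/3=2\alpha_{\max}^{(n)}+2$ lies outside the window.

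The window does not exclude every short cycle, however. Take $n=9$, so $m=513=27\cdot 19$. The seed $56$ has $\gcd(57,513)=57$, and its cycle $\{56,113,227,455,398,284\}$ has length $\operatorname{ord}_9(2)=6<18$. Yet $56\le\alpha_{\max}^{(9)}=84$.

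So the provable statement is your fallback, which can be made exact. The $K$ tested generators yield exactly $\sum_{i=1}^{K}\operatorname{ord}_{N_i}(2)\le 2nK$ distinct points, where $N_i=(2^n+1)/\gcd(\alpha_i+1,2^n+1)$. Equality holds precisely when every tested cycle has full period.

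One last refinement: build $|C(\alpha)|=2n$ itself into ``valid'' rather than $\gcd(\alpha+1,2^n+1)=1$. The gcd condition is sufficient but not necessary. For $n=9$, the generator $26$ has $\gcd(27,513)=27$, yet its period is $\operatorname{ord}_{19}(2)=18=2n$.
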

    
    Having established a unique generator selection, a structural property arising directly from S-LCG is now identified.
    Proposition~\ref{prop:even_generators} shows that every generator produced by S-LCG is necessarily even, a consequence not of explicit construction, but of the interplay between the multiplier, increment, and odd modulus.

    \begin{proposition}\label{prop:even_generators}
        Every generator of S-LCG is even.
    \end{proposition}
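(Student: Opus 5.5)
Since each generator is by definition $\alpha_i = \min C_i$ (Theorem~\ref{thm:unique_rep}), it suffices to show that every cycle $C_i$ of the bijection $f(x) = 2x+1 \bmod m$, $m = 2^n+1$, has an even minimum. The plan is to prove the stronger local statement that every odd element $y \in \mathbb{Z}_m$ has a strictly smaller predecessor under $f$, and that this predecessor lies in the same cycle. Then an odd element can never be the minimum of its cycle, so the minimum, which exists because $C_i$ is a nonempty finite set, must be even.

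The first step is to compute the preimage of an odd $y$ explicitly. Lemma~\ref{lem:disjoint} shows $f$ is a bijection, so $y$ has a unique preimage $f^{-1}(y)$. For odd $y$ with $1 \le y \le 2^n - 1$, take $x = (y-1)/2$. This is an integer with $0 \le x \le 2^{n-1}-1$. Moreover $2x + 1 = y < m$, so no reduction modulo $m$ occurs, and $f(x) = y$. By uniqueness, $f^{-1}(y) = x$.

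The second step is the comparison. Since $x = (y-1)/2 < y$, the predecessor is strictly smaller. Because $f$ is a permutation, $x$ and $y$ lie in the same orbit, since $y = f(x)$. Hence $y$ is not $\min C_i$. The odd values run over $1, 3, \dots, 2^n - 1$, and the largest residue $2^n$ is even, so every odd residue in $\mathbb{Z}_m = \{0, \dots, 2^n\}$ is covered by this argument.

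The main point needing care is that the preimage is taken without modular wraparound. The argument depends on $2x+1$ staying below $m$ for odd $y$, which holds because $y \le 2^n - 1 < m$. By contrast, for even $y$ the preimage is $(y-1+m)/2$, which can be larger than $y$; that is exactly why even elements are able to be minima. I would close by checking the result against Table~\ref{tab:generators_states}, where all listed generators are indeed even.
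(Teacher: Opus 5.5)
Your proof is correct and follows essentially the same route as the paper: an odd element $y$ has preimage $(y-1)/2 < y$ in the same cycle, so it cannot be the cycle minimum. Your explicit check that $2x+1 = y < m$, so that no modular wraparound occurs, justifies the step the paper states only as $2^{-1}(\alpha-1) \equiv \frac{\alpha-1}{2} \pmod{2^n+1}$.
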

    \begin{proof}
        (By contradiction) assume that $C$ is a cycle generated by the generator $\alpha$ and $\alpha$ is odd. As the S-LCG is a bijective map, there exists a pre-image of $\alpha$ given by:
        \[
            p = 2^{-1}(\alpha -1) \pmod{2^n + 1}
        \]
        Since $\alpha$ is odd therefore $\alpha - 1$ is even and $\frac{\alpha - 1}{2} \in \{0, 1, \ldots, 2^n -1\}$. 
        Observe that $p = 2^{-1}(\alpha -1) \equiv \frac{\alpha - 1}{2} \pmod{2^n + 1}$. But, $\frac{\alpha - 1}{2} < \alpha$ leading to the contradiction that a generator is the minimal element in the cycle. Hence, a generator is always even.
    \end{proof}

    With the even-generator property established, the internal structure of each cycle is now examined. In the next theorem(Theorem~\ref{thm:cycle_length}), it is shown that the length of every cycle is governed by $n$ and is at most $2n$.

    \subsection{The Last Generator \texorpdfstring{($\alpha_{max}^{(n)}$)}{\alpha_{max}^{(n)}} }\label{sec: alpha_max}

    Section \ref{sec:properties slcg} establish that the set of generators $G = \{\alpha_1, \alpha_2, ..., \alpha_k\}$ is well-defined and finite. The largest element of G, denoted $\alpha_{max}^{(n)}$, determines the termination condition of the algorithm. Through extensive numerical experimentation across [range of parameters tested], it is observed that $\alpha_{\max}^{(n)}$ satisfies the following relationship:

    \begin{conjecture}[Maximum Generator]\label{conj:alphamax}
        For any odd $n \in \mathbb{N}$, the maximum valid generator $\alpha_{\max}^{(n)}$ for S-LCG admits both a recursive and a closed-form description.\\\
        \textbf{Closed form:}\\
        \begin{equation}
        \alpha_{\max}^{(n)} = \frac{4}{3}\Big(4^{(\frac{n-3}{2})} - 1\Big).
        \end{equation}
        \textbf{Recursive form:}\\
        \begin{equation}
        \alpha_{max}^{(2k+1)} = 4 \times \alpha_{max}^{(2k-1)} + 4, \quad\quad 2 < k < 30 , \text{ and } \alpha_{max}^{(3)} = 0.  
        \end{equation}
    \end{conjecture}

    \noindent\textit{Numerical evidence.} Conjecture~\ref{conj:alphamax} has been verified computationally for all odd $n \leq 53$ listed in Table~\ref{alpha_max_odd}. No counterexample has been found across the above tested cases. A formal proof remains an open problem.

\begin{table}[H]
    \centering
    \footnotesize 
    \setlength{\tabcolsep}{4pt} 
    \renewcommand{\arraystretch}{0.9} 
    
    \begin{tabular}{
        S[table-format=2.0]
        S[table-format=8.0]
        S[table-format=9.0]
        S[table-format=2.3]
        @{\hspace{3em}} 
        S[table-format=2.0]
        S[table-format=16.0]
        S[table-format=16.0]
        S[table-format=2.3]
    }
    
    \toprule
    {$n$} & {$\alpha_{\max}^{n}$} & {$2^n$} & {Ratio (\%)} & 
    {$n$} & {$\alpha_{\max}^{n}$} & {$2^n$} & {Ratio (\%)} \\
    \midrule
    3  & 0        & 8         & 0.000  & 29 & 89478484         & 536870912        & 16.667 \\
    5  & 4        & 32        & 12.500 & 31 & 357913940        & 2147483648       & 16.667 \\
    7  & 20       & 128       & 15.625 & 33 & 1431655764       & 8589934592       & 16.667 \\
    9  & 84       & 512       & 16.406 & 35 & 5726623060       & 34359738368      & 16.667 \\
    11 & 340      & 2048      & 16.602 & 37 & 22906492244      & 137438953472     & 16.667 \\
    13 & 1364     & 8192      & 16.650 & 39 & 91625968980      & 549755813888     & 16.667 \\
    15 & 5460     & 32768     & 16.663 & 41 & 366503875924     & 2199023255552    & 16.667 \\
    17 & 21844    & 131072    & 16.666 & 43 & 1466015503700    & 8796093022208    & 16.667 \\
    19 & 87380    & 524288    & 16.666 & 45 & 5864062014804    & 35184372088832   & 16.667 \\
    21 & 349524   & 2097152   & 16.667 & 47 & 23456248059220   & 140737488355328  & 16.667 \\
    23 & 1398100  & 8388608   & 16.667 & 49 & 93824992236884   & 562949953421312  & 16.667 \\
    25 & 5592404  & 33554432  & 16.667 & 51 & 375299968947540  & 2251799813685248 & 16.667 \\
    27 & 22369620 & 134217728 & 16.667 & 53 & 1501199875790164 & 9007199254740992 & 16.667 \\
    \bottomrule
    \end{tabular}
    \caption{Exhaustively verified values for $\alpha_{\max}$, including ratio of $\alpha_{max}$ to $2^n$.}
    \label{alpha_max_odd}
\end{table}

\section{Theoretical Analysis}
\label{sec:theory}
This section discusses the foundational theories of the proposed algorithm. First, the technique to mitigate the curse of dimensionality associated with using LCG in high dimensions is described. Next, the concept of reducing the d-dimensional search space to a 1-dimensional surrogate landscape is introduced. Finally, the algorithm maintains a constant information acquisition rate due to the algebraic properties of $S$-$LCG$, is established.

\subsection{Bit-Splitting Encoding}
\label{sec:bit-split}

    Classically, when an LCG is used to generate points in multi-dimensional spaces, it suffers from a structural flaw identified by Marsaglia~\cite{Marsaglia1968}. Marsaglia's theorem states that $d$-dimensional tuples formed by successive LCG outputs fall on a relatively small number of parallel hyperplanes. Specifically, the number of these hyperplanes is bounded above by $\lfloor(d! \cdot m)^{1/d}\rfloor$. This phenomenon creates a systematic coverage gap, making the standard LCG a poor choice for discretely approximating continuous search spaces. In order to bypass Marsaglia's issue, a bit-splitting encoding technique is proposed, which is inspired from Genetic Algorithms. Bit-splitting encoding is formally defined as follows:

    \begin{definition}[Bit-Splitting Encoding]
        Let $[L_j, U_j]$ be the search bound for the $j$-th variable, and let $b_j$ be its allocated bit width, where $\sum_j b_j = n$. Partition the $n$-bit state $S$ of the S-LCG into contiguous segments $S^{(1)}, S^{(2)}, \ldots$ where each S$^{(j)} \in \{0, 1, \ldots, 2^{b_j}-1\}$ is the integer value of the $j$-th segment. Then, the $j$-th coordinate corresponding to a state $S$ of the S-LCG is given by:
    \label{def:bit-split}
        \begin{equation}\label{eq:bitsplit}
            v_{j} \;=\; L_{j} + \frac{S_j}{2^{b_j} - 1} \cdot \bigl(U_{j} - L_{j}\bigr).
        \end{equation}
    \end{definition}
    The following example demonstrates how one state of S-LCG gives rise to a point in 3D:
    \begin{example}
        $S=001|101|101 \Rightarrow (5,5,1),\;
    v_j=-1+\frac{S_j}{7}\cdot 2,\;
    \mathbf{v}=(0.4285,0.4285,-0.7142)$. Because all $d$ coordinates are extracted from a \emph{single} state rather than consecutive states, no linear dependency exists among the dimensions. This establishes a true bijection between the state space and the $d$-dimensional discretized grid. 
    \end{example}
        \begin{figure}[H]
    \centering
    \includegraphics[height=0.20\textheight, keepaspectratio]
    {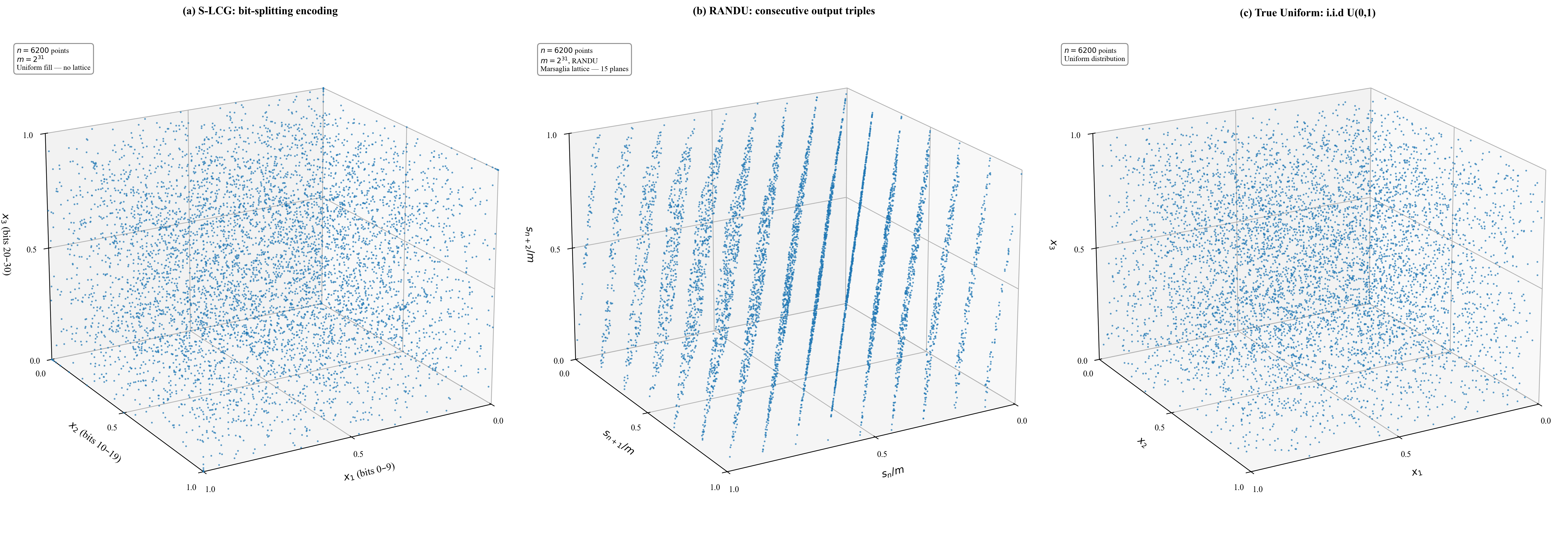}
    \caption{Three-dimensional representations of the spatial distributions produced by S-LCG, RANDU, and Uniform distribution.}
    \label{Marsaglia_Comparison}
    \end{figure}
    
    Table~\ref{tab:uniformity} illustrates the distribution of points produced by S-LCG, Randu, and uniform distributions for the study of uniformity. The outcomes of S-LCG in this study are satisfactory for algorithm establishment, utilizing 6,200 points in the three-dimensional space of $2^{31}$. The small correlation found is an expected behavior of the S-LCG due to the use of step size ($\Delta$) between sampled generators. Figure~\ref{Marsaglia_Comparison} gives a visual representation of the distribution of points for the above-considered case. The figure shows that the points generated by RANDU exhibit parallel hyperplane clustering (Marsaglia's effect), in contrast to those generated by the uniform distribution and S-LCG.

    \begin{table}[H]
    \small
    \centering
    \footnotesize 
    \setlength{\tabcolsep}{4pt} 
    \renewcommand{\arraystretch}{0.9} 
    \begin{tabular}{@{}llccc@{}}
    \toprule 
    Test & Metric & S-LCG & RANDU & True Uniform \\
    \midrule 
    Chi-square ($5^3$ bins) & $p$-value & \textbf{0.298} & 0.117 & 0.109 \\
    KS (worst dimension) & $D$-statistic & \textbf{0.0101} & 0.0110 & 0.0143 \\
    Nearest-neighbor & Mean/Expected & \textbf{0.984} & 0.706 & 1.018 \\ 
    Nearest-neighbor & CV & \textbf{0.412} & 0.543 & 0.372 \\
    Max $|$Correlation$|$ & $|r|$ & 0.029 & 0.008 & 0.028 \\ 
    \bottomrule 
    \end{tabular} 
    \caption{Results of uniformity.}
    \label{tab:uniformity} 
    \end{table}

    \subsection{Surrogate Function Transformation}\label{sec:surrogate}
    
    The core strategy that the proposed algorithm uses in order to solve a continuous, multi-dimensional optimization problem is by converting the multi-dimension objective function into a 1-dimensional, discrete surrogate function. This is achieved via a two-level nested loop architecture. The outer loop iterates over the generator space $G$, and for each generator, an inner loop finds the minimum function value over the cycle produced by the generator. This process implicitly constructs a surrogate function $g(\alpha)$ over the discrete generator space which is defined as follows:
    \begin{definition}[Surrogate Function]
        Let $f:  D \subseteq \mathbb{R}^d  \rightarrow \mathbb{R}$ be an objective function that needs to be minimized, and let $G$ be set of generators. For each generator $\alpha \in G$, let $C(\alpha) = \{x_1, x_2, \dots, x_{2n}\}$ denote the set of candidate solutions produced by cycle of $\alpha$ and normalized to the search domain, then, the surrogate function $g: G \rightarrow \mathbb{R}$ is defined as follows:
        \[
             g(\alpha) = \min_{x \in C(\alpha)} f(x).
        \]
        
    \end{definition}
    Thus, by mapping the exhaustive cycle evaluation into the single operator $g(\alpha)$, the original problem, $\arg\min_{x \in D} f(x)$, is transformed into finding $\arg\min_{\alpha \in G} g(\alpha)$. Hence, by the help of surrogate function, S-LCG is able to handle the curse of dimensionality which the traditional population-based meta-heuristic methods finds difficult to deal with due to exponential increase in the volume of the search space with the increase in dimension. Furthermore, the generator space is drastically bounded to a computationally tractable domain (Section~\ref{sec: alpha_max}). The empirical $\alpha_{max}$ conjecture bounds the generator space to $\alpha \in G \cap [0, \alpha_{max}]$, effectively reducing the global search space to approximately 16\% of the total integer domain.

    \subsubsection{Navigating the surrogate landscape}
    Considering that all the generators in $G$ are even (Proposition~\ref{prop:even_generators}), an increment of $2$ in the outer loop, starting from the generator $0$ until $\alpha_{max}$, takes us through all the generators in $G$. However, the total number of generators in the space is around $\frac{2^n}{2n}$, which is too large for exhaustive traversal for large $n$. This necessitates an intelligent step control strategy within the generator space. But, for the surrogate function $g(\alpha)$ to be optimizable via adaptive step-control, it must exhibit localized topographical structure rather than pure random noise.
    
    For small value of $\delta$, the two generators, $\alpha$ and $\alpha + \delta$, are next to each other in $\mathbb{Z}_m$, hence they produce cycles $C(\alpha)$ and $C(\alpha + \delta)$ that are adjacent subspaces in $\mathbb{R}^d$. This makes point sets in $d$-dimensional space partly correlated because their starting conditions are so similar. This means that $g(\alpha)$ and $g(\alpha + \delta)$ are compared in nearby areas of the search space, which causes local correlation within $g$.
    
    The adaptive step control strategy used by S-LCG alternate between exploitation ($\delta$-step) and exploration ($\Delta$-step). $\delta$-step is applied when $g(\alpha)$ yields an improvement. Improvement in $g(\alpha)$ indicates a descent direction in the surrogate landscape. A small step ($\delta$) allows the algorithm to meticulously exploit this correlated sub-space but when local exploitation stagnates, a large leap ($\Delta$) shatters the bit-level correlation, projecting the search into a distinct, algebraically uncorrelated region of $G$ to escape local optima. The adaptive step control strategy will be elaborated more in Algorithm Section~\ref{sec:algo}.

    To quantify the effect of adaptive step control strategy in surrogate landscape, the F8 function(Schwefel) is considered as it is known for being notoriously deceptive. The assessment is conducted in the space of $\mathbb{Z}_{2^{21}}$ for two dimension. It reveals a highly structured landscape rather than erratic noise as illustrated in Figure~\ref{fig:Surrogate_function_Schwefel}.
    
    \begin{figure}[H]
    \centering
    \includegraphics[width=0.8\linewidth , height=0.25\textheight]{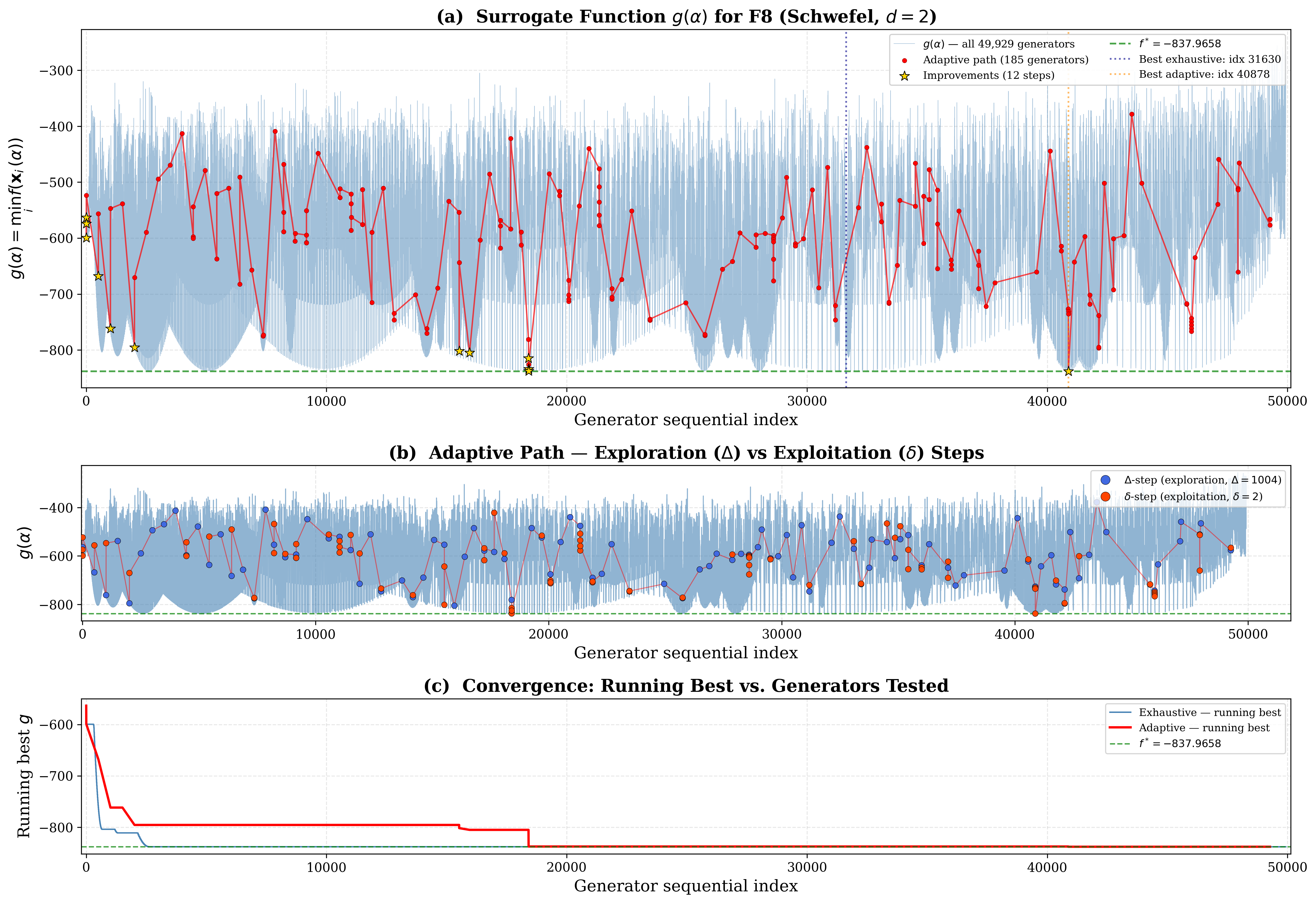}
    \caption{Exhaustive versus adaptive evaluation of the surrogate landscape $g(\alpha)$ on the deceptive Schwefel function.}
    \label{fig:Surrogate_function_Schwefel}
\end{figure}

    The practical efficiency of this smoothing is profound. As detailed in Table~\ref{tab:surrogate}, an adaptive traversal using the $\delta = 2 \text{ and } \Delta = 1004 $ required evaluating only 185 generators (0.37\% of the space) to achieve a solution within 0.016\% of the known global optimum. 
\begin{table}[H]
    \centering
    \begin{tabular}{lrrr}
        \toprule
        \textbf{Strategy} & \textbf{Generators Tested} & \textbf{Best $g(\alpha)$} & \textbf{Gap from $f^{*}$} \\
        \midrule
        Exhaustive & 49{,}929 & $-837.9615$ & 0.0005\% \\
        Adaptive   &      185 & $-837.8338$ & 0.0160\%  \\
        \bottomrule
    \end{tabular}
    \caption{Exhaustive vs.\ adaptive sweep efficiency on F8 (Schwefel, $d=2$).}
    \label{tab:surrogate}
\end{table}
The exploration step $\Delta$ is a large sampling interval, similar to the sampling theorem of Nyquist-Shannon~\cite{Nyquist1928Telegraph, Shannon1949Communication}. The smoothness of $g(\alpha)$ provide wide sampling sufficient to find a productive valleys, which reduce computational cost of exhaustive enumeration.

To confirm existing of correlation between consecutive generator  lag-$k$ autocorrelation used, defined as the Pearson correlation between $g(\alpha_j)$ and $g(\alpha_{j+k})$ on the whole generators space, which confirms strong local smoothness: lag-1 yields $\rho_1 = 0.6494$, decaying gradually to $\rho_{20} = 0.0452$ Figure.~\ref{fig:rho_Schwefel}. This localized correlation provides the strict mathematical justification for the $\delta$-step exploitation mechanism.

\begin{figure}[H]
    \centering
    \includegraphics[width=0.7\linewidth]{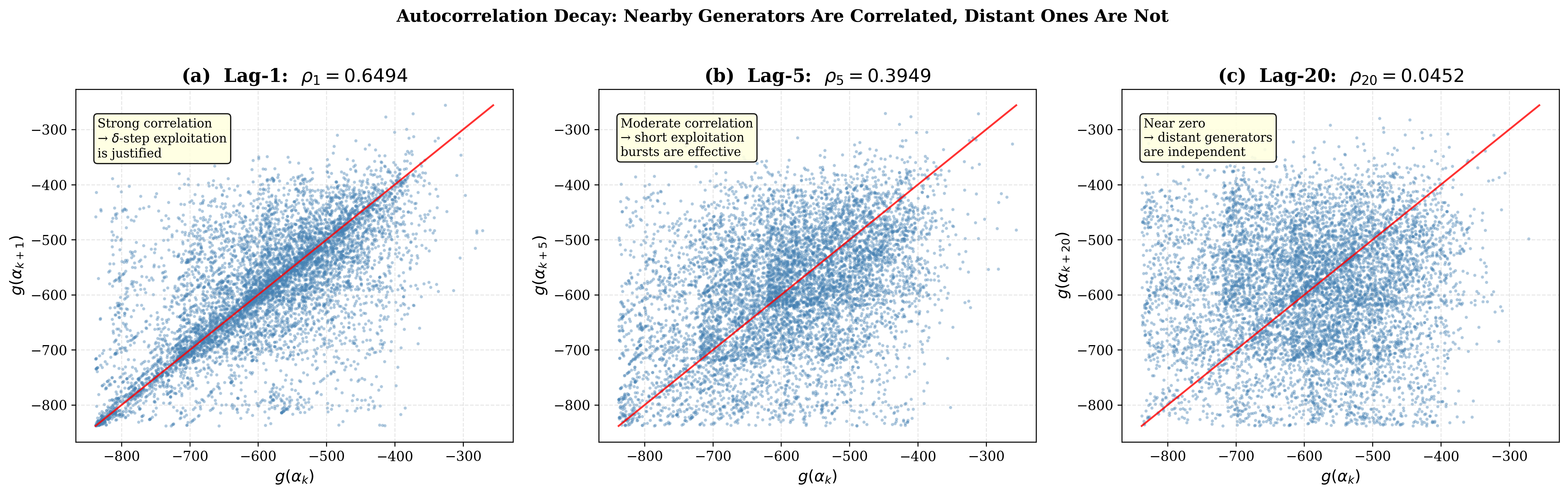}
    \caption{Lag-$k$ autocorrelation in the surrogate landscape, demonstrating localized topological.}
    \label{fig:rho_Schwefel}
\end{figure}

\subsection{Information Acquisition Rate}
\label{sec:Acquisition_Rate}
One of the limitations of existing stochastic optimization methods is that they revisit the same points again and again, which results in a waste of resources, especially in population-based methods, in which they tend to cluster in regions that are already explored. This results in a decay of the effective information rate $I_{\text{eff}}(t)$ to zero. In contrast S-LCG guarantee no decay in information rate by it's structural design. The following proposition proves that S-LCG has constant information acquisition rate.

\begin{proposition}[Constant Information Rate]\label{prop:inforate} Let $I(k)$ be the number of new, unique $d$-dimensional points introduced by the $k$-th generator. Then for all $k\ge 1$, it have $I(k)=2n$. \end{proposition}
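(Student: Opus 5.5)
The plan is to reduce $I(k)$ to the size of a single cycle and then to pin down that size. Let $\alpha_1<\alpha_2<\cdots$ be the generators visited by the outer loop, each obtained through the scanning procedure of Theorem~\ref{thm:unique_rep}. Let $C(\alpha_k)$ be the cycle of $\alpha_k$ under $f(x)=2x+1 \bmod (2^n+1)$. The $k$-th generator contributes the points $\mathbf{v}(S)$ for $S\in C(\alpha_k)$, where $\mathbf{v}$ is the bit-splitting map of Definition~\ref{def:bit-split}. So
\[
I(k)=\Bigl|\,\mathbf{v}(C(\alpha_k))\setminus \textstyle\bigcup_{j<k}\mathbf{v}(C(\alpha_j))\,\Bigr|.
\]

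\textbf{Step 1 (newness).} By Lemma~\ref{lem:disjoint} the cycles partition $\mathbb{Z}_m$. By Theorem~\ref{thm:unique_rep} distinct generators lie in distinct cycles. Hence $C(\alpha_k)\cap C(\alpha_j)=\emptyset$ for $j<k$.

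\textbf{Step 2 (injectivity of the encoding).} Next I transport disjointness to $\mathbb{R}^d$. Each segment value $S_j$ is an integer in $\{0,\dots,2^{b_j}-1\}$. The affine map $S_j\mapsto L_j+\frac{S_j}{2^{b_j}-1}(U_j-L_j)$ is injective whenever $U_j>L_j$. Concatenation of segments is injective on $n$-bit strings. Together these make $\mathbf{v}$ injective on $\{0,\dots,2^n-1\}$. Therefore distinct states give distinct points, and
\[
I(k)=|C(\alpha_k)|.
\]
One boundary issue must be handled here: the residue $2^n\in\mathbb{Z}_{2^n+1}$ is not an $n$-bit string. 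I would observe that $f(2^n)=2^{n+1}+1\equiv 2^n$, so $2^n$ is a fixed point. Its cycle is $\{2^n\}$, which lies above every generator, so it is never scanned. Alternatively, $2^n$ can simply be excluded from the encoded state space.

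\textbf{Step 3 (cycle length), the main obstacle.} It remains to show $|C(\alpha_k)|=2n$. Theorem~\ref{thm:cycle_length} only gives that the period $p$ divides $2n$. The Remark after it gives $p=2n$ exactly when $\gcd(\alpha_k+1,\,2^n+1)=1$. So the claim holds verbatim for every generator satisfying this coprimality condition. The argument is: from \eqref{eq1b}, $p$ is the order of $2$ modulo $(2^n+1)/\gcd(\alpha_k+1,2^n+1)$, which is $2n$ when the gcd is $1$.

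When $2^n+1$ has a proper divisor shared with $\alpha_k+1$, short cycles genuinely occur. For example, with $n=3$ and $m=9$, the generator $2$ has cycle $\{2,5\}$. The two-line check $2\cdot 2+1=5$ and $2\cdot 5+1=11\equiv 2$ confirms this. I would handle this case in one of two ways. The first is to restrict the outer loop to generators with full period. The test in Algorithm~\ref{alg:generator_check}, which runs $n+1$ shifted states, would be examined to see whether it already rejects short-cycle seeds. The second is to count $I(k)$ with multiplicity over the $2n$ inner-loop evaluations, in which case only $2n/p$ repetitions of the same new points occur.

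I expect this reconciliation to be the real content of the proof. Concretely, I would try to show that short cycles correspond exactly to $\alpha_k+1$ sharing a factor with $2^n+1$. I would then argue, or impose as a hypothesis, that the admitted generators avoid these. With that in place, Steps 1 and 2 give $I(k)=2n$ for all $k\ge 1$, and Corollary~\ref{cor: zero_redundancy} follows by summing over $k$.
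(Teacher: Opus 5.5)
Your Steps 1 and 2 are the paper's entire argument. Its proof cites the zero-redundancy corollary (each generator contributes $2n$ states disjoint from earlier cycles) and the bijectivity of bit-splitting. You are right that the real content is elsewhere. That corollary is stated without proof and already presupposes $|C(\alpha_k)|=2n$. This is false for generators defined as cycle minima in Theorem~\ref{thm:unique_rep}. Besides your $n=3$ example, the paper's own Table~\ref{tab:generators_states} lists the row $42\to 85$ for $n=7$.

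So the proposition cannot be proved verbatim. It needs $G$ to consist of minima of cycles of length exactly $2n$. The paper's numerics fit this reading: for $n=21$ exactly $135$ states have period below $42$, and $(2^{21}+1-135)/42=49{,}929$ is precisely the exhaustive count in Table~\ref{tab:surrogate}. With that hypothesis written into the definition of $G$, your Steps 1--2 finish the proof. As submitted, however, your proposal leaves Step 3 open, and the route you sketch for it fails.

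The failing step is your characterization of short cycles. The Remark gives only the sufficient condition $\gcd(\alpha+1,2^n+1)=1\Rightarrow p=2n$, not ``exactly when''. The equivalence you plan to prove (short cycle iff $\alpha+1$ shares a factor with $2^n+1$) is false. For $n=5$, $\alpha=2$ has $\gcd(3,33)=3$, yet its cycle $2,5,11,23,14,29,26,20,8,17$ has length $10=2n$. Likewise, in Table~\ref{tab:generators_states} the generators $2,8,20$ satisfy $3\mid \alpha+1$ and still have full rows. Using coprimality as the admission rule would therefore throw away genuine full cycles and would not reproduce the paper's $G$ or $\alpha_{\max}$.

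The correct criterion comes from your own formula $p=\operatorname{ord}_N(2)$ with $N=(2^n+1)/\gcd(\alpha+1,2^n+1)$. Since $2^n\equiv -1 \pmod N$, for $N>1$ one gets $p=2n/e$ with $e$ an odd divisor of $n$. Using $\gcd(2^n+1,2^{2n/q}-1)=2^{n/q}+1$ for odd $q$, it follows that $p<2n$ iff $\alpha+1$ is a multiple of $(2^n+1)/(2^{n/q}+1)$ for some odd prime $q\mid n$.

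This has three consequences. When $n$ is a power of $2$, there is no short cycle other than the fixed point $\{2^n\}$. For odd prime $n$ the only other short cycle is $\{(2^n-2)/3,(2^{n+1}-1)/3\}$, which lies above the conjectured $\alpha_{\max}$. For composite odd $n$, short cycles fall inside the scanned range: for $n=9$, $56$ is a cycle minimum of period $6$ and $56<\alpha_{\max}^{(9)}=84$. A minimality test, including any corrected version of Algorithm~\ref{alg:generator_check}, admits such seeds. So the full-period restriction has to be imposed on $G$; it cannot be derived from the generator check.

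Your multiplicity alternative does not rescue the statement either. It counts $2n$ evaluations but only $p$ new unique points.
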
 

\begin{proof}  By Corollary \ref{cor: zero_redundancy}, the $k$-th generator generates a sequence of $2n$ state is entirely disjoint from all previously studied cycles. These $2n$ states correspond to $2n$ different spatial coordinates that still need to be evaluated, since the bit-splitting encoding (see Eq.~\eqref{eq:bitsplit}) is a strict bijection. So \(I(k) = 2n\) is valid globally over the whole search space.
\end{proof}
Thus, after traversing $K$ generators, the algorithm has tested exactly $2n \times K$ different points. The $K$-th generator is as informative as the first one. The constant rate of information is important for high-dimensional scalability, since it ensures that the algorithm’s ability to explore does not artificially decrease with an exponential increase in the spatial volume due to localised convergence.

\section{The S-LCG Algorithm}
\label{sec:algo}




In this section, we describe all the major components of the proposed algorithm. We first describe the fixed and tunable parameters of the algorithm, and then we move on to elaborate on the two-level nested loop architecture introduced in Section~\ref{sec:surrogate}, which implicitly constructs and optimises the surrogate landscape. Next, we elaborate on the adaptive step control strategy that enables us to navigate the generator space effectively and attain an optimal value without having to examine each generator in G.

\subsection{Initialization}
\label{subsec:init}

    The procedure needs two things: the objective function $f(\mathbf{x})$ and the search domain boundaries $[L_j, U_j]$ for each variable $j = 1, \ldots, d$.
 
    \medskip
    \noindent\textbf{Fixed parameters:} The modulus $m$, increment $c$, and initial seed $x_0$ are established using the $S$-$LCG$ framework (Section~\ref{sec:lcg}).
    The exploitation parameter is set at $\delta = 2$ (the smallest even increase), and the sequence length for each generator is $2n$.

    \noindent\textbf{Tunable parameters:} Three parameters control how an algorithm works, ordered by sensitivity:
 
    \begin{enumerate}
    \item \textbf{Exploration step $\Delta$} \textit{(primary):} Regulates search resolution and exerts the most significant impact on performance.
          The derivation involves concatenating the binary representations of the variables, with the first variable occupying the least significant bits (LSB) and the last variable occupying the most significant bits (MSB):
          \begin{equation}
              \Delta \;=\; \operatorname{int}\!\left(
                  \operatorname{bin}(x_n,\, b_n) \;\|\; \cdots \;\|\;
                  \operatorname{bin}(x_1,\, b_1)
              \right).
              \label{eq:Delta}
          \end{equation}
          For example, if you have three variables with bit lengths of 4, 4, and 3, and you set \( x_1 = 0 \) and give the other bits to \texttt{00010001}, the 11-bit string you get is \texttt{00010001000}, which means \( \Delta = 136 \).
 
    \item \textbf{Stagnation threshold $S_{\max}$} \textit{(secondary):}
Count of consecutive valid generators before a mandated diversification leap without global enhancement.
 
    \item \textbf{Exploitation budget $E_{\max}$} \textit{(secondary):} The maximum $\delta$-steps that can be taken in a row during a local exploitation surge.
\end{enumerate}
 
The parameters $S_{\max}$ and $E_{\max}$ need only minor adjustments for different issue instances; $\Delta$ is the only parameter that usually needs to be calibrated for each situation.
\subsection{Two-level nested loop architecture and adaptive step control}

The central idea of the two-level nested loop architecture is to dictate what needs to be evaluated (the inner loop) and where to search next (the outer loop).

\paragraph{Inner loop (deterministic sequence evaluation):}The inner loop executes the recursive $S$-$LCG$ function for a given generator $\alpha$ for a fixed $2n$ times. The loop performs bit-split encoding on each generated integer point~\ref{sec:bit-split} and then evaluates the objective function $f$ at it. In this way, for each generated cycle, the inner loop finds the local minimum within it. Hence, the inner loop is a complete, fixed cost evaluation of a single, disjoint orbit and does not involve any heuristics. The deterministic rigidity of the loop implies that the zero-redundancy property (Theorem~\ref{thm:unique_rep}) is always achieved.

\paragraph{Outer loop (Adaptive generator navigation):}The algorithm's optimisation intelligence comes from the outer loop, as it determines which generator to evaluate next in the generator space. The generator is chosen based on the step size, which in turn is determined by two mechanisms: a fine exploitation step $\delta$ and a coarse exploration step $\Delta$. The decision logic for the step size is based on local surrogate descent and stagnation detection. It traverses the generator space by evaluating a three-tier priority hierarchy described as follows:

\paragraph{Priority 1-Stagnation detection:}Each time the successive generator fails to yield an improvement over the global best objective value found so far, the stagnation counter is incremented. This metric helps to track exactly how long the algorithm has searched a specific region without success. If the stagnation counter exceeds $S_{max}$, a $\Delta$ step exploratory jump is mandated, and the counter is reset (Algorithm 1, line 24). Stagnation detection strictly overrides exploitation; ongoing global stagnation forces diversification regardless of local descent. This prevents the algorithm from endlessly refining a sub-optimal basin.

\paragraph{Priority 2-Local exploitation:}Exploitation corresponds to moving to an adjacent generator that will yield a structurally correlated set justified by surrogate smoothness (Section 4.2). Therefore, if the current generator yields an improved local optimum compared to the previous generator ($f^*_{\text{local}} < f_{\text{prev}}$) and the exploitation counter is below $E_{\max}$, a $\delta$-step is executed (line 26). The role of $E_{\max}$ is to bound the depth of the exploitation surge and prevent infinite local trapping.

\begin{remark}

The exploitation trigger deliberately compares $f^*_{\text{local}}$ against $f_{\text{prev}}$ (the previous generator's local optimum), rather than $f^*_{\text{global}}$. This determines whether the surrogate $g(\alpha)$ is \emph{locally descending} in the current region of seed space, enabling the exploitation of productive neighborhoods even if the global optimum currently resides elsewhere.

\end{remark}

\paragraph{Priority 3 -- Default exploration:} 

If neither the stagnation nor the local improvement thresholds are met, a $\Delta$-step advance occurs, and the exploitation counter resets (line 29). This default diversification ensures continuous progression through the seed space.

\begin{figure}[H]
\small
\centering
\begin{tikzpicture}[font=\small]
 
    \draw[->, thick] (0,0) -- (14,0) node[right] {Generator seed $\alpha$};
 
    \foreach \x/\lbl in {0/$\alpha_0$, 2/$\alpha_1$, 5.5/$\alpha_k$,
                          7.5/$\alpha_
{k+1}$, 9.5/$\alpha_{k+2}$, 12.5/$\alpha_{k+j}$}
        \draw (\x, 0.08) -- (\x,-0.08) node[below] {\lbl};
 
    \draw[->, thick, blue!70!black, bend left=40]
        (0,0.2) to node[above, midway] {$\Delta$-step (explore)} (5.5,0.2);
 
    \foreach \from/\to in {5.5/7.5, 7.5/9.5}
        \draw[->, thick, red!70!black]
            (\from,0.2) to node[above, midway] {$\delta$} (\to,0.2);
 
    \draw[->, thick, blue!70!black, bend left=35]
        (9.5,0.2) to node[above, midway] {$\Delta$-step (explore)} (12.5,0.2);
 
    \node[align=center, text=red!70!black, font=\footnotesize] at (7.5,-0.8)
        {Exploitation burst\\($\delta$-steps, $g(\alpha)$ descending)};
 
    \node[align=center, text=blue!70!black, font=\footnotesize] at (2.75,-0.8)
        {Diversification\\($\Delta$-step)};
 
    \node[align=center, text=blue!70!black, font=\footnotesize] at (11.0,-0.8)
        {Diversification\\($\Delta$-step)};
 
    \foreach \x in {0, 5.5, 7.5, 9.5, 12.5}
        \filldraw[black] (\x,0) circle (2.5pt);
 
    \foreach \x in {2, 3.5}
        \draw[gray, densely dashed] (\x,0.08) -- (\x,-0.08);
    \node[gray, font=\footnotesize] at (3,-0.2) {skipped};
 
\end{tikzpicture}
\caption{%
How to get around in the generator space in $S$-$LCG$.
    Extensive $\Delta$-steps (blue) improve the search by moving to generators that are far away and not structurally related.
    Minor $\delta$-steps (red) take advantage of a good area when the surrogate $g(\alpha)$ shows local descent.
    Gray dashes show seeds that \textsc{IsGenerator} found to be faulty and don't need function evaluations.
    Every seed that is evaluated (filled circle) starts a full $2n$-evaluation inner loop.
}
\label{fig:gennav}
\end{figure}

\subsection{Complete Algorithm}
\label{sec:description}
 
The complete S-LCG procedure is given in Algorithm~\ref{alg:slcg}
    \begin{algorithm}[H]
    \caption{S-LCG Optimization Algorithm}
    \label{alg:slcg}
    \begin{algorithmic}[1]\label{algorithm}
    \footnotesize
    \renewcommand{\baselinestretch}{1}\selectfont
    \Require Objective function $f(\mathbf{x})$, search domain $\Omega$
    \Ensure Best solution $\mathbf{x}^*$ and fitness $f^* = f(\mathbf{x}^*)$

    \textit{\textcolor{gray}{\% Phase 1: Initialization}}
    \State Set constants: modulus $m$, increment $c$, seed $x_0$
    \State Set parameters: $\alpha_0$, $\alpha_{\max}$, $\Delta_{\text{step}}$, $\delta_{\text{step}}$, $S_{\max}$, $E_{\max}$
    \State $f^*_{\text{global}} \gets +\infty$; \; $\mathbf{x}^*_{\text{global}} \gets \texttt{null}$
    \State $f_{\text{prev}} \gets +\infty$; \; $\alpha_k \gets \alpha_0$
    \State $\texttt{stag\_ctr} \gets 0$; \; $\texttt{exploit\_ctr} \gets 0$


\textit{\textcolor{gray}{\% Phase 2: Main Search Loop}}
\While{$\alpha_k \leq \alpha_{\max}$}
    \If{\Call{IsGenerator}{$\alpha_k$}} \Comment{Validate multiplier}\ref{alg:generator_check}

        \textit{\textcolor{gray}{\quad\quad\% Phase 2a: Sequence Evaluation}}
        \State $f^*_{\text{local}} \gets +\infty$; \; $\mathbf{x}^*_{\text{local}} \gets \texttt{null}$
        \State $\mathbf{x} \gets x_0$ \Comment{Initialize LCG state}
        \For{$i = 0$ \textbf{to} $2n - 1$}
            \If{$f(\mathbf{x}) < f^*_{\text{local}}$}
                \State $f^*_{\text{local}} \gets f(\mathbf{x})$; \; $\mathbf{x}^*_{\text{local}} \gets \mathbf{x}$
            \EndIf
            \State $\mathbf{x} \gets (\alpha_k \cdot \mathbf{x} + c) \bmod m$ \Comment{S-LCG transition}
        \EndFor

        \textit{\textcolor{gray}{\quad\quad\% Phase 2b: Global Memory Update}}
        \If{$f^*_{\text{local}} < f^*_{\text{global}}$}
            \State $f^*_{\text{global}} \gets f^*_{\text{local}}$; \; $\mathbf{x}^*_{\text{global}} \gets \mathbf{x}^*_{\text{local}}$
            \State $\texttt{stag\_ctr} \gets 0$
        \Else
            \State $\texttt{stag\_ctr} \gets \texttt{stag\_ctr} + 1$
        \EndIf

        \textit{\textcolor{gray}{\quad\quad\% Phase 2c: Adaptive Step Control}}
        \If{$\texttt{stag\_ctr} > S_{\max}$} \Comment{Stagnation $\Rightarrow$ force exploration}
            \State $\alpha_k \gets \alpha_k + \Delta_{\text{step}}$
            \State $\texttt{stag\_ctr} \gets 0$
        \ElsIf{$\texttt{exploit\_ctr} < E_{\max}$ \textbf{and} $f^*_{\text{local}} < f_{\text{prev}}$}
            \State $\alpha_k \gets \alpha_k + \delta_{\text{step}}$ \Comment{Exploit nearby multiplier}
            \State $\texttt{exploit\_ctr} \gets \texttt{exploit\_ctr} + 1$
        \Else \Comment{No improvement $\Rightarrow$ explore}
            \State $\alpha_k \gets \alpha_k + \Delta_{\text{step}}$
            \State $\texttt{exploit\_ctr} \gets 0$
        \EndIf

        \State $f_{\text{prev}} \gets f^*_{\text{local}}$

    \Else \Comment{Invalid multiplier $\Rightarrow$ skip}
        \State $\alpha_k \gets \alpha_k + \Delta_{\text{step}}$
    \EndIf
\EndWhile


\State \Return $\mathbf{x}^*_{\text{global}}$, $f^*_{\text{global}}$

\end{algorithmic}
\end{algorithm}

\subsection{Computational Complexity and Memory Efficiency}
\label{subsec:complexity}

\paragraph{Time complexity:} The \textsc{IsGenerator} check requires $O(n)$ operations per outer loop iteration. Valid generators subsequently trigger the inner loop, demanding exactly $2n$ function evaluations and corresponding $O(n)$ bit-splitting operations. Therefore, evaluating a single valid generator costs $O(n \cdot f_{\text{eval}})$. Invalid generators incur only the $O(n)$ check before being discarded. For $K$ total evaluated generators, the overarching time complexity is tightly bounded at $O(K \cdot n \cdot f_{\text{eval}})$.

\paragraph{Space complexity:}
A profound advantage of S-LCG is its minimal memory footprint. The algorithm stores only the optimal solution vector $\mathbf{x}^*_{\text{global}} \in \mathbb{R}^d$, the scalar fitness $f^*_{\text{global}}$, and three tracking counters ($\texttt{stag\_ctr}$, $\texttt{exploit\_ctr}$, $f_{\text{prev}}$). The LCG state $\mathbf{x}$ is a single integer updated in-place. Because S-LCG maintains no population matrices, velocity arrays, or covariance matrices, its space complexity is strictly $O(d)$. This dimension-independent memory profile renders S-LCG exceptionally scalable compared to standard population-based methods, which require $O(P \cdot d)$ or $O(d^2)$ storage (Table~\ref{tab:complexity}).
\begin{table}[H]
\small
\centering
\footnotesize 
\setlength{\tabcolsep}{4pt} 
\renewcommand{\arraystretch}{0.9} 

\begin{tabular}{lcc}
\toprule
\textbf{Algorithm} & \textbf{Time (per iteration)} & \textbf{Space} \\
\midrule
S-LCG (this work) & $O(n \cdot f_{\text{eval}})$   & $O(d)$        \\
GA / DE / PSO      & $O(P \cdot f_{\text{eval}})$   & $O(P \cdot d)$\\
CMA-ES             & $O(d^2 + f_{\text{eval}})$     & $O(d^2)$      \\
Simulated Annealing& $O(f_{\text{eval}})$           & $O(d)$        \\
\bottomrule
\end{tabular}
\caption{Complexity comparison of S-LCG against representative meta-heuristics.}
\label{tab:complexity}
\end{table}

\section{Experimental Evaluation}\label{sec:experiments}
To validate S-LCG's capability to compete with well-established algorithms, this section  evaluates 26 standard benchmark functions scaled from 2 to 30 dimensions, along with three well-known constrained engineering design problems. Against eight binary optimization algorithms, and are supported by statistical analysis of the results, as discussed in the following subsections.

\subsection{Experimental Setup}\label{sec:setup}
    To evaluate $S$-$LCG$, a well-known standardized benchmark suite in the meta-heuristic literature is employed, consisting of 26 classical test functions widely used over the past 25 years ~\cite{Yao1999,Yang2010TestProblems,GWO,Ahmadianfar2021RUN}. These functions can be categorized into three categories: The first category consists of seven unimodal functions ($f_1$-$f_7$), the next are nine multimodal functions ($f_8$-$f_{16}$), and the rest are the ten fixed-dimension multimodal functions ($f_{17}$-$f_{26}$).
    This suite of benchmark functions was carefully chosen for its wide range of structures, not just because it's the norm. Each of these functions has its own level of difficulty. Even among the unimodal functions, each tests different abilities.
    The performance of S-LCG is compared with eight state-of-the-art binary metaheuristics from the literature: The Genetic Algorithm (GA), Binary Ant Colony Optimiser (BACO), Binary Differential Evolution (BDE), Binary Grey Wolf Optimiser (BGWO), two variants of the Binary Particle Swarm Optimiser, one using the sigmoid transfer function (BPSO) and the other using the TANH (T-BPSO), Binary Simulated Annealing (BSA), and Binary Tabu Search (BTS). The implementation of each competing algorithm is based on the parameter recommendations of the original source. For the stochastic variability, every algorithm is independently executed 30 times per problem, while the deterministic S-LCG is run only once. The parameters of S-LCG are fixed for all problems in the trials: $\delta = 2$ for the exploitation step, $E_{\max} = 60$ for the evaluation budget, and $S_{\max} = 5000$ for the stagnation threshold. The exploration step $\Delta$ was dynamically determined from the problem's dimensionality by bit concatenation as explained in the previous section ~\ref{sec:algo}.

    \textit{Because of the page limit, the full performance tables and charts (Minimum, Maximum, Mean, Standard Deviation) for all nine algorithms for all twenty-six functions and eight dimensions are contained in the Supplementary Material. Then review the statistical analysis discussed in the following sections.}

\subsection{Benchmark Performance and Scalability}\label{sec:benchmark_results}

The performance of S-LCG shows an evident and highly beneficial trend, in which its relative advantage increases with the increase of dimension, as it has been claimed in section~\ref{sec:Acquisition_Rate}, where the acquisition rate is discussed. Therefore, the following discusses the most significant result.

\textbf{Unimodal Exploitation (F1–F7):} Unimodal landscapes well known for it's exploitation ability of an algorithm. S-LCG achieves accuracy close to the machine precision for the Sphere (F1) and Schwefel 2.22 (F2) functions. Remarkably, this accuracy holds perfectly as the dimensionality increases to $d=30$. The iconic Rosenbrock valley (F5) causes a drastic drop in the performance of population-based methods in higher dimensions, due to the presence of a difficult valley (e.g., GA mean degradation to $2.12 \times 10^2$ for $d=30$). In contrast, S-LCG has an excellent $5.99 \times 10^{-2}$, indicating that it can reliably traverse narrow, curved valleys without getting trapped due to its deterministic geometric sampling for the space uniformly as discussed in section ~\ref{sec:theory}.

\textbf{Multi-modal Exploration (F8–F16):} Global search procedures are evaluated on multimodal functions. On the deceptive Schwefel function (F8), the global optimum is far away from the local attractors and binary algorithms used to suffer from severe premature convergence in higher dimensions. S-LCG ahcived $-1.26 \times 10^4$, which implies that the $\Delta$-step exploration of S-LCG always samples isolated optimal basins. The same robustness is found on the highly dense Rastrigin (F9) and Ackley (F10) functions. The only Function S-LCG performed poorly is F14, which needs careful tuning to find a better solution. For functions with shrinking optimal basins (F15, F16), stochastic methods completely fail at higher dimensions, whereas S-LCG correctly locks on the global minimum.
 
The results for the scalable functions for $d=30$ are shown in Table ~\ref{tab:results_mean_d30}. Statistical significance is evaluated using the one-sample Wilcoxon signed-rank test between the value obtained with S-LCG and the median of 30 runs of the opposing algorithms. The algorithms column is comparatively ranked using Friedman’s test. To track convergence and show the exploitation analysis, it is shown in figure ~\ref{fig:visual_unimodal}, where using an array to save the exploitation makes the algorithm capable of detecting the existence of multiple global minima.

\newcolumntype{C}{>{\centering\arraybackslash\scriptsize}p{1.5cm}}
\small
\setlength{\tabcolsep}{3pt}
\renewcommand{\arraystretch}{0.9}
\begin{longtable}{lCCCCCCCCC}

\toprule
\textbf{Function} & \multicolumn{1}{c}{\textbf{S-LCG}} & \multicolumn{1}{c}{\textbf{GA}} & \multicolumn{1}{c}{\textbf{BACO}} & \multicolumn{1}{c}{\textbf{BSA}} & \multicolumn{1}{c}{\textbf{BTS}} & \multicolumn{1}{c}{\textbf{T-BPSO}} & \multicolumn{1}{c}{\textbf{BGWO}} & \multicolumn{1}{c}{\textbf{BPSO}} & \multicolumn{1}{c}{\textbf{BDE}} \\
\midrule
\endfirsthead
\toprule
\textbf{Function} & \multicolumn{1}{c}{\textbf{S-LCG}} & \multicolumn{1}{c}{\textbf{GA}} & \multicolumn{1}{c}{\textbf{BACO}} & \multicolumn{1}{c}{\textbf{BSA}} & \multicolumn{1}{c}{\textbf{BTS}} & \multicolumn{1}{c}{\textbf{T-BPSO}} & \multicolumn{1}{c}{\textbf{BGWO}} & \multicolumn{1}{c}{\textbf{BPSO}} & \multicolumn{1}{c}{\textbf{BDE}} \\
\midrule
\endfirsthead

\bottomrule
\multicolumn{10}{r}{\textit{Continued on next page}} \\
\endfoot

\bottomrule
\noalign{\vspace{3pt}} 
\caption{Mean Results on Benchmark Functions ($d=30$)}\label{tab:results_mean_d30} \\
\endlastfoot

F1 & \phantom{-}2.66e-07\phantom{$^{+}$} & \phantom{-}2.71e-07$^{=}$ & \phantom{-}1.49e-06$^{=}$ & \phantom{-}2.66e-07$^{=}$ & \phantom{-}5.87e-07$^{=}$ & \phantom{-}1.76e+04$^{+}$ & \phantom{-}2.16e+04$^{+}$ & \phantom{-}2.65e+04$^{+}$ & \phantom{-}3.52e+04$^{+}$ \\
F2 & \phantom{-}2.81e-04\phantom{$^{+}$} & \phantom{-}2.81e-04$^{=}$ & \phantom{-}3.84e-04$^{+}$ & \phantom{-}2.81e-04$^{=}$ & \phantom{-}3.64e-04$^{+}$ & \phantom{-}4.99e+01$^{+}$ & \phantom{-}5.08e+01$^{+}$ & \phantom{-}7.43e+01$^{+}$ & \phantom{-}8.11e+01$^{+}$ \\
F3 & \phantom{-}1.59e-04\phantom{$^{+}$} & \phantom{-}6.98e+03$^{+}$ & \phantom{-}2.34e+03$^{+}$ & \phantom{-}3.47e+04$^{+}$ & \phantom{-}3.67e+04$^{+}$ & \phantom{-}3.48e+04$^{+}$ & \phantom{-}2.89e+04$^{+}$ & \phantom{-}3.27e+04$^{+}$ & \phantom{-}4.01e+04$^{+}$ \\
F4 & \phantom{-}3.53e-03\phantom{$^{+}$} & \phantom{-}5.75e-02$^{+}$ & \phantom{-}5.82e-01$^{+}$ & \phantom{-}8.52e-04$^{-}$ & \phantom{-}1.11e-01$^{+}$ & \phantom{-}6.05e+01$^{+}$ & \phantom{-}7.13e+01$^{+}$ & \phantom{-}6.08e+01$^{+}$ & \phantom{-}6.71e+01$^{+}$ \\
F5 & \phantom{-}5.99e-02\phantom{$^{+}$} & \phantom{-}2.12e+02$^{+}$ & \phantom{-}1.27e+02$^{+}$ & \phantom{-}9.94e+03$^{+}$ & \phantom{-}2.85e+03$^{+}$ & \phantom{-}2.56e+07$^{+}$ & \phantom{-}5.11e+07$^{+}$ & \phantom{-}4.62e+07$^{+}$ & \phantom{-}7.75e+07$^{+}$ \\
F6 & \phantom{-}2.17e-01\phantom{$^{+}$} & \phantom{-}3.40e+00$^{+}$ & \phantom{-}3.68e+00$^{+}$ & \phantom{-}3.86e+00$^{+}$ & \phantom{-}3.87e+00$^{+}$ & \phantom{-}1.66e+04$^{+}$ & \phantom{-}2.14e+04$^{+}$ & \phantom{-}2.76e+04$^{+}$ & \phantom{-}3.55e+04$^{+}$ \\
F7 & \phantom{-}7.04e-05\phantom{$^{+}$} & \phantom{-}3.09e-03$^{+}$ & \phantom{-}5.75e-03$^{+}$ & \phantom{-}1.74e-02$^{+}$ & \phantom{-}3.01e-02$^{+}$ & \phantom{-}1.23e+01$^{+}$ & \phantom{-}2.31e+01$^{+}$ & \phantom{-}2.15e+01$^{+}$ & \phantom{-}3.51e+01$^{+}$ \\
F8 & -1.26e+04\phantom{$^{+}$} & -1.18e+04$^{+}$ & -1.24e+04$^{+}$ & -1.15e+04$^{+}$ & -9.63e+03$^{+}$ & -7.22e+03$^{+}$ & -6.76e+03$^{+}$ & -6.27e+03$^{+}$ & -6.66e+03$^{+}$ \\
F9 & \phantom{-}1.38e-07\phantom{$^{+}$} & \phantom{-}2.65e+01$^{+}$ & \phantom{-}1.72e+01$^{+}$ & \phantom{-}4.16e+01$^{+}$ & \phantom{-}7.26e+01$^{+}$ & \phantom{-}2.48e+02$^{+}$ & \phantom{-}2.07e+02$^{+}$ & \phantom{-}2.84e+02$^{+}$ & \phantom{-}3.11e+02$^{+}$ \\
F10 & \phantom{-}7.54e-05\phantom{$^{+}$} & \phantom{-}7.62e-05$^{+}$ & \phantom{-}1.62e-04$^{+}$ & \phantom{-}7.54e-05$^{=}$ & \phantom{-}6.20e-01$^{+}$ & \phantom{-}1.39e+01$^{+}$ & \phantom{-}1.40e+01$^{+}$ & \phantom{-}1.56e+01$^{+}$ & \phantom{-}1.65e+01$^{+}$ \\
F11 & \phantom{-}6.52e-07\phantom{$^{+}$} & \phantom{-}4.05e-02$^{+}$ & \phantom{-}3.53e-02$^{+}$ & \phantom{-}1.35e-01$^{+}$ & \phantom{-}1.42e-01$^{+}$ & \phantom{-}1.47e+02$^{+}$ & \phantom{-}1.98e+02$^{+}$ & \phantom{-}2.43e+02$^{+}$ & \phantom{-}3.20e+02$^{+}$ \\
F12 & \phantom{-}1.06e-04\phantom{$^{+}$} & \phantom{-}5.33e-01$^{+}$ & \phantom{-}5.00e-01$^{+}$ & \phantom{-}5.15e-01$^{+}$ & \phantom{-}6.24e-01$^{+}$ & \phantom{-}3.34e+07$^{+}$ & \phantom{-}8.90e+07$^{+}$ & \phantom{-}5.30e+07$^{+}$ & \phantom{-}1.15e+08$^{+}$ \\
F13 & \phantom{-}6.42e-03\phantom{$^{+}$} & \phantom{-}1.68e+00$^{+}$ & \phantom{-}1.69e+00$^{+}$ & \phantom{-}1.59e+00$^{+}$ & \phantom{-}1.87e+00$^{+}$ & \phantom{-}1.00e+08$^{+}$ & \phantom{-}1.85e+08$^{+}$ & \phantom{-}1.61e+08$^{+}$ & \phantom{-}3.01e+08$^{+}$ \\
F14 & -1.33e+01\phantom{$^{+}$} & -2.74e+01$^{-}$ & -2.78e+01$^{-}$ & -2.70e+01$^{-}$ & -2.35e+01$^{-}$ & -1.31e+01$^{=}$ & -1.39e+01$^{-}$ & -1.16e+01$^{+}$ & -1.10e+01$^{+}$ \\
F15 & -1.00e+00\phantom{$^{+}$} & \phantom{-}5.13e-254$^{+}$ & \phantom{-}1.95e-251$^{+}$ & \phantom{-}1.02e-75$^{+}$ & \phantom{-}4.86e-209$^{+}$ & \phantom{-}2.79e-115$^{+}$ & \phantom{-}1.92e-255$^{+}$ & \phantom{-}2.50e-99$^{+}$ & \phantom{-}3.77e-124$^{+}$ \\
F16 & -9.99e-01\phantom{$^{+}$} & \phantom{-}5.95e-13$^{+}$ & \phantom{-}5.21e-13$^{+}$ & \phantom{-}5.22e-11$^{+}$ & \phantom{-}7.19e-13$^{+}$ & \phantom{-}9.62e-11$^{+}$ & \phantom{-}3.15e-10$^{+}$ & \phantom{-}3.33e-10$^{+}$ & \phantom{-}1.87e-09$^{+}$ \\
\midrule
\textbf{+/=/-} & \multicolumn{1}{c}{-} & \multicolumn{1}{c}{13/2/1} & \multicolumn{1}{c}{14/1/1} & \multicolumn{1}{c}{11/3/2} & \multicolumn{1}{c}{14/1/1} & \multicolumn{1}{c}{15/1/0} & \multicolumn{1}{c}{15/0/1} & \multicolumn{1}{c}{16/0/0} & \multicolumn{1}{c}{16/0/0} \\
\end{longtable}

\bgroup
\renewcommand\LTcaptype{figure} 

\begin{longtable}{cccc}

\textbf{Exploitation History} & \textbf{Convergence} & \textbf{Exploitation History} & \textbf{Convergence} \\[2ex]

\includegraphics[width=0.23\textwidth]{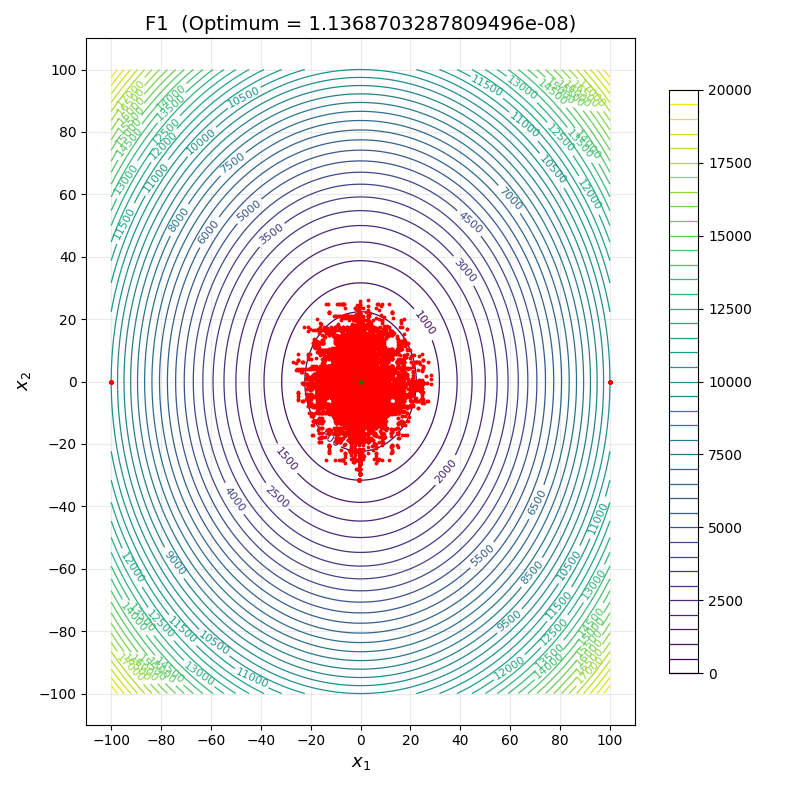} &
\includegraphics[width=0.25\textwidth]{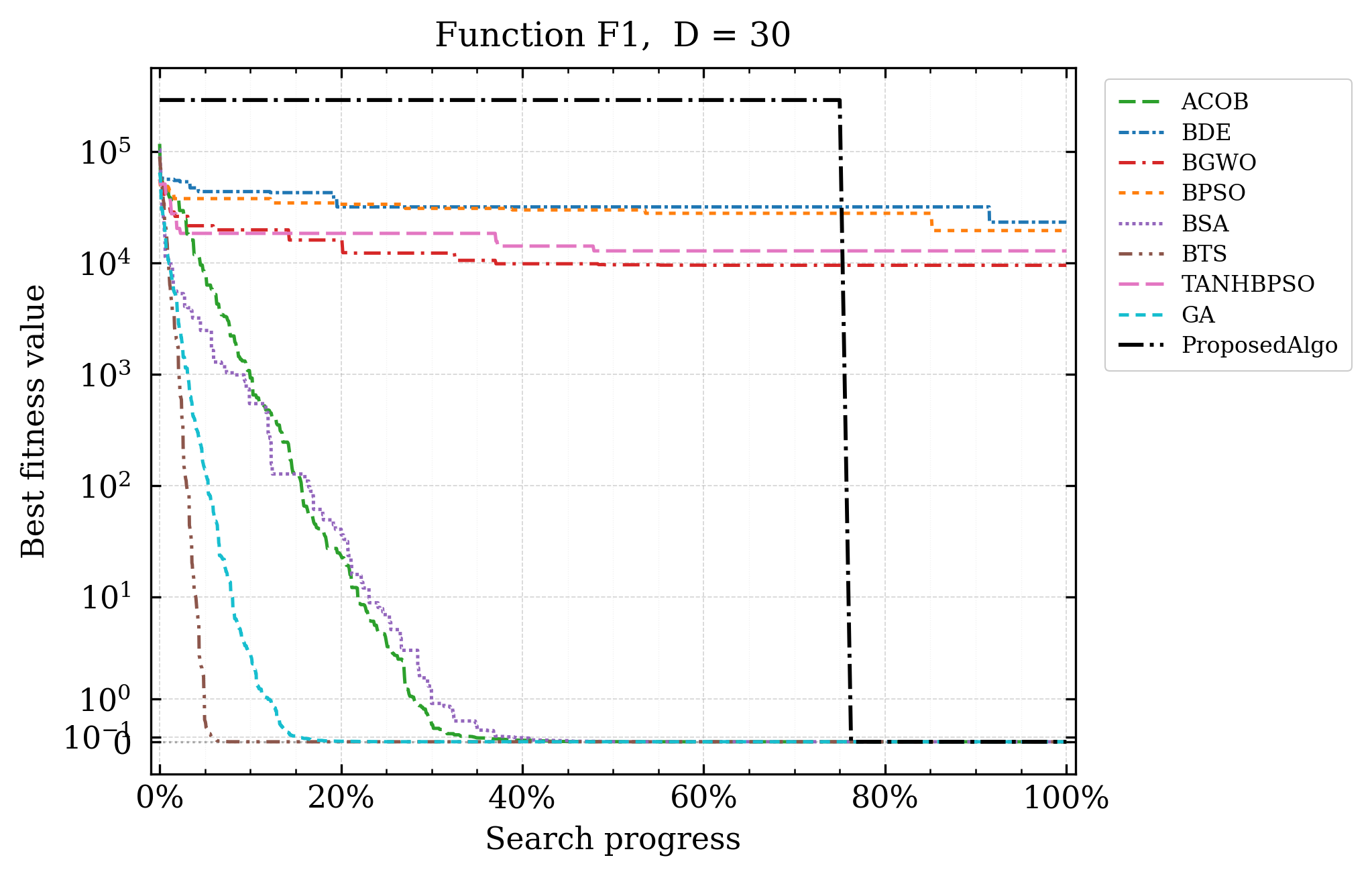} & 
\includegraphics[width=0.23\textwidth]{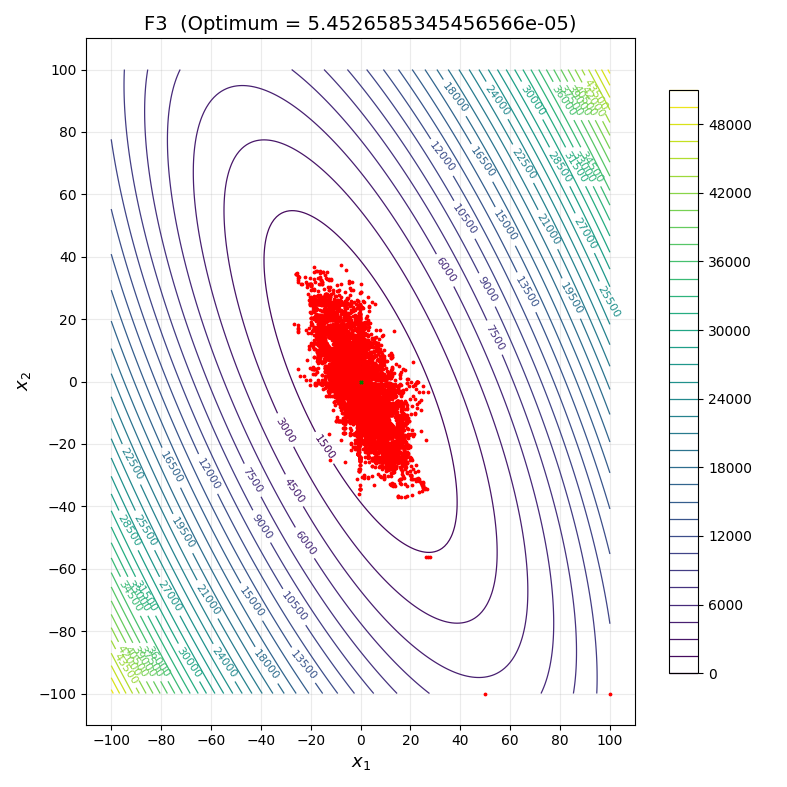} &
\includegraphics[width=0.25\textwidth]{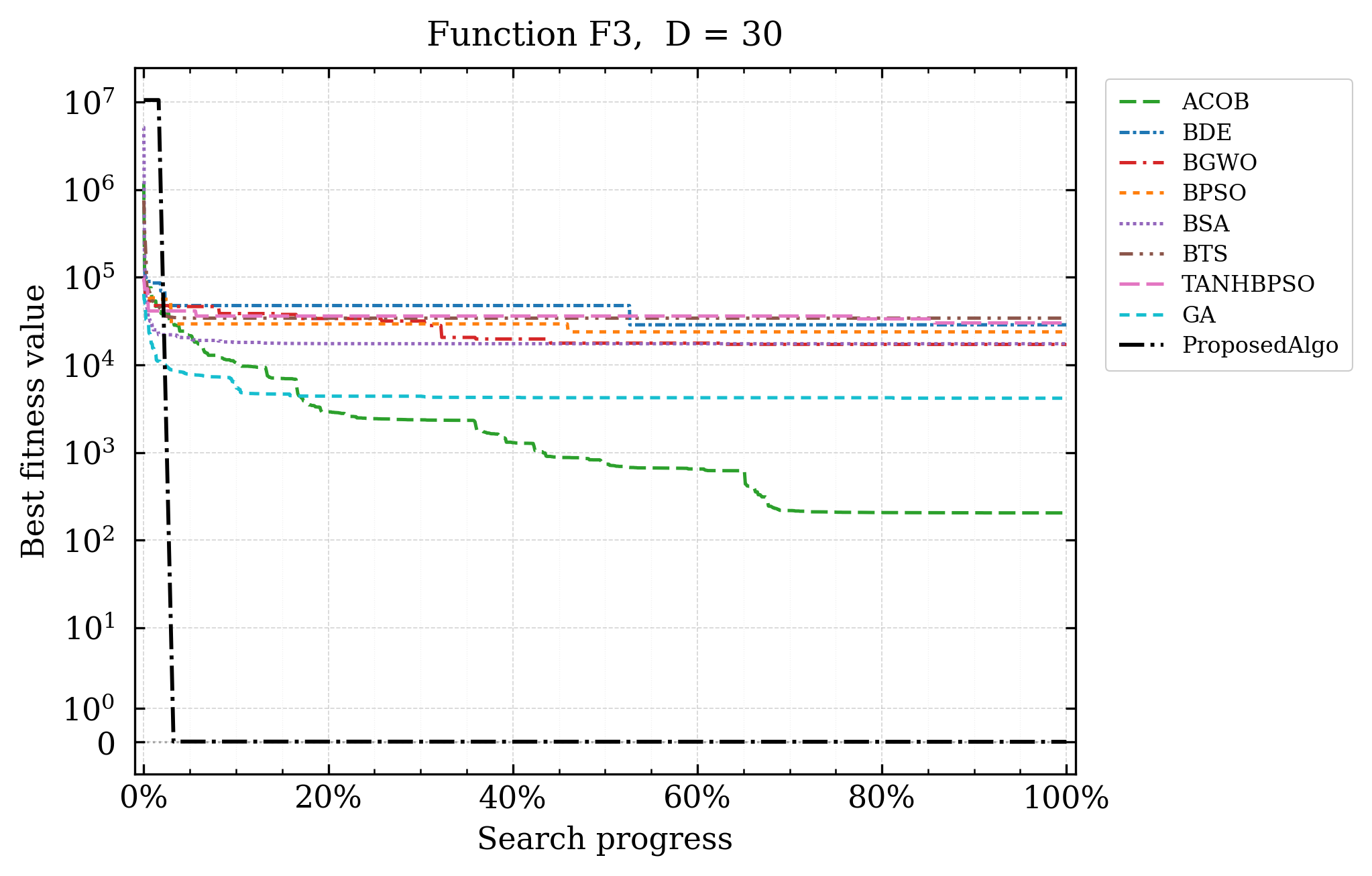} \\[2ex]

\includegraphics[width=0.23\textwidth]{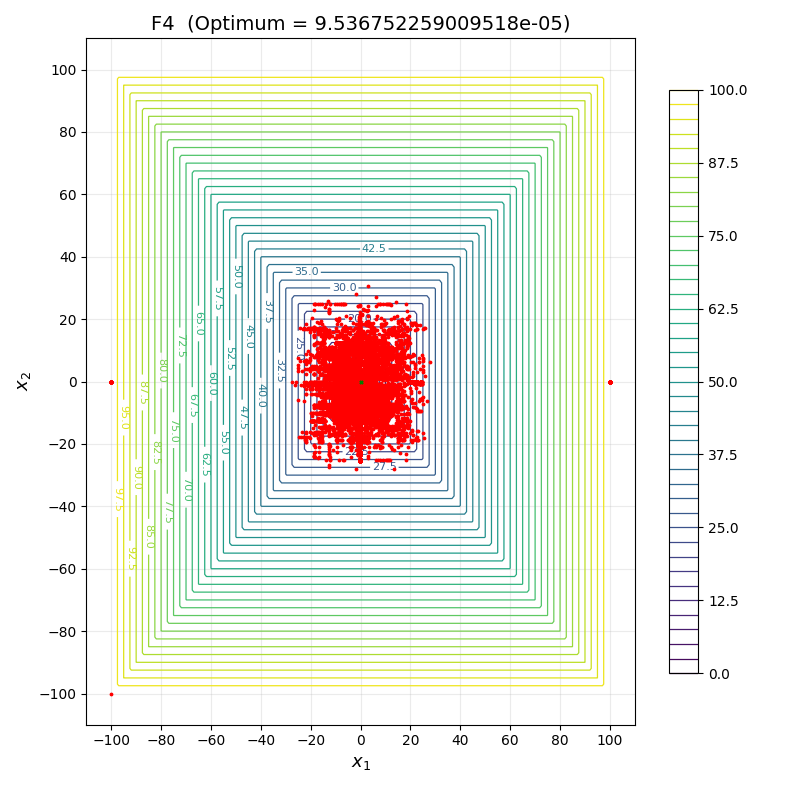} &
\includegraphics[width=0.25\textwidth]{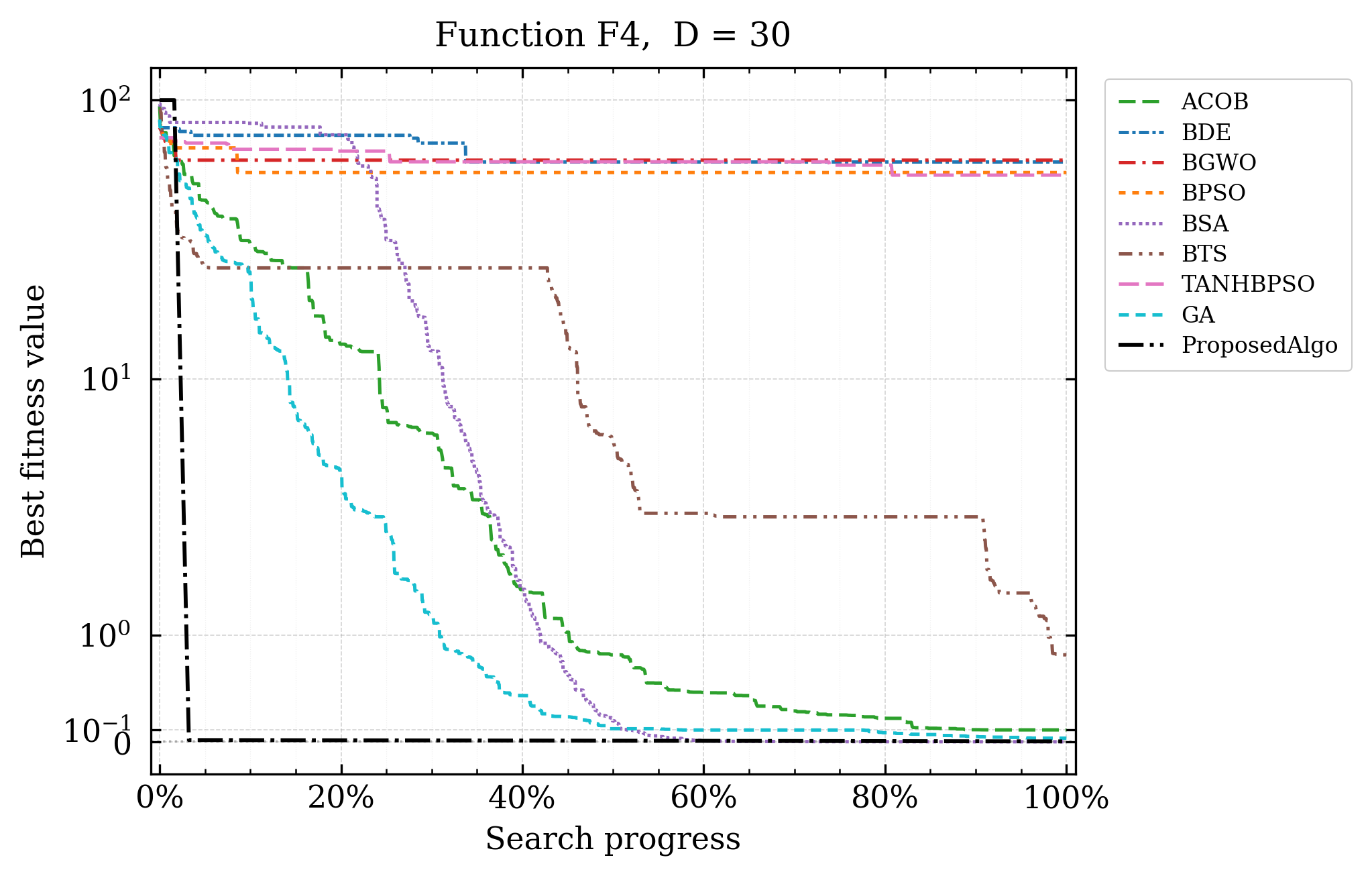} & 
\includegraphics[width=0.23\textwidth]{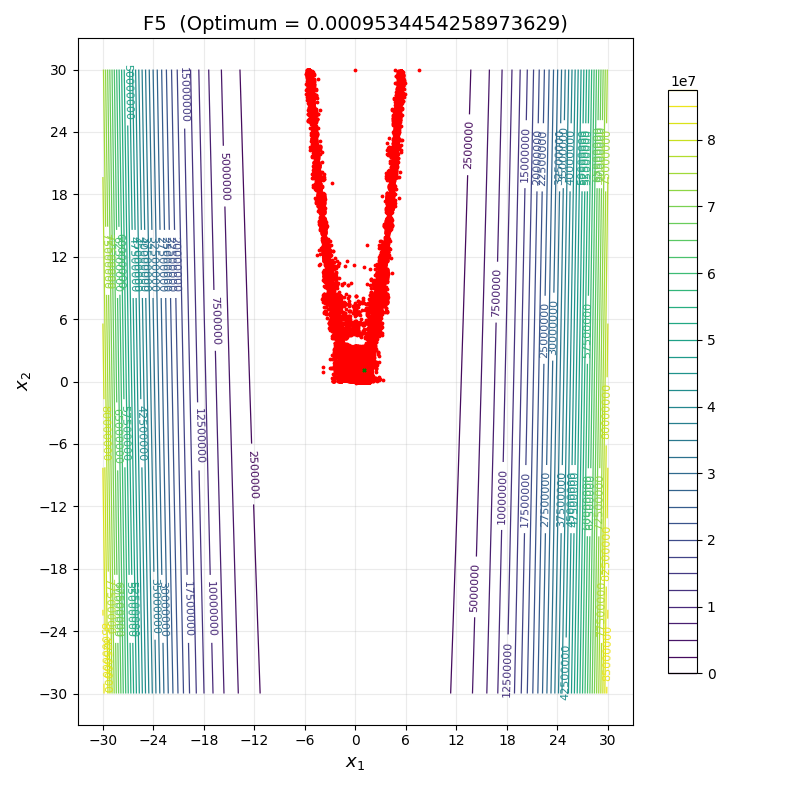} &
\includegraphics[width=0.25\textwidth]{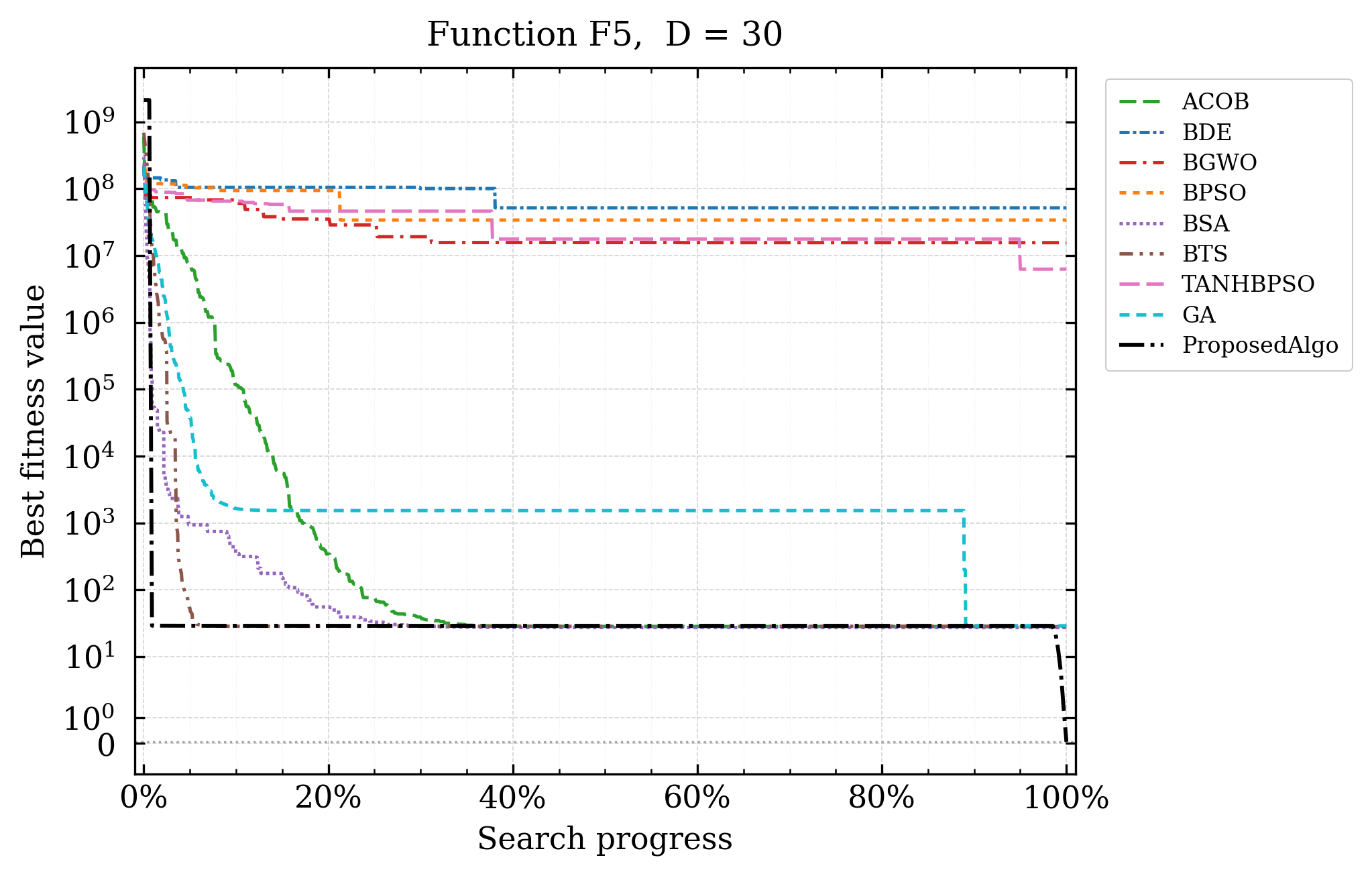} \\[2ex]

\includegraphics[width=0.23\textwidth]{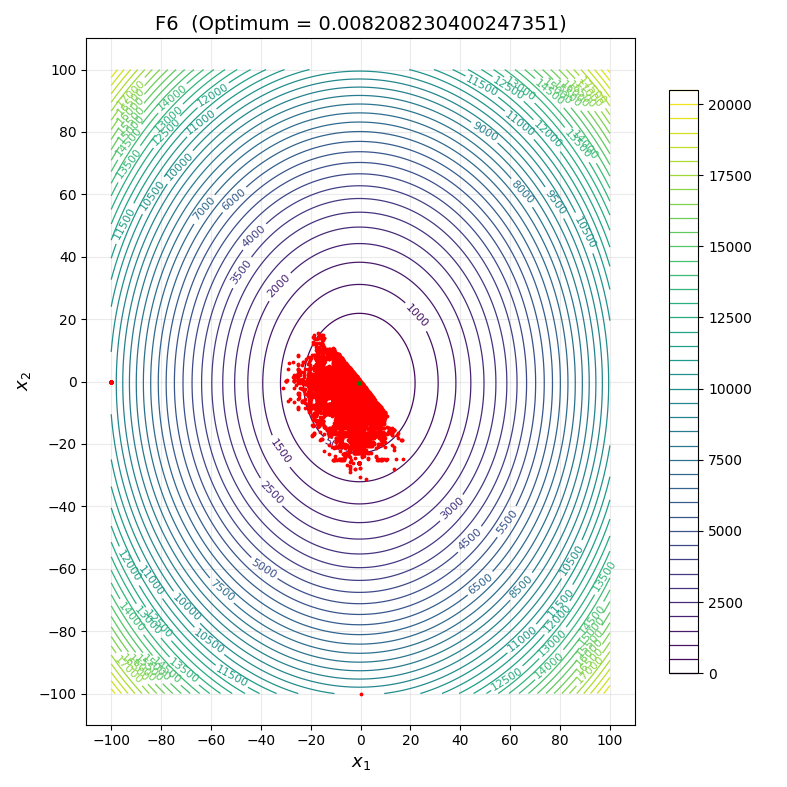} &
\includegraphics[width=0.25\textwidth]{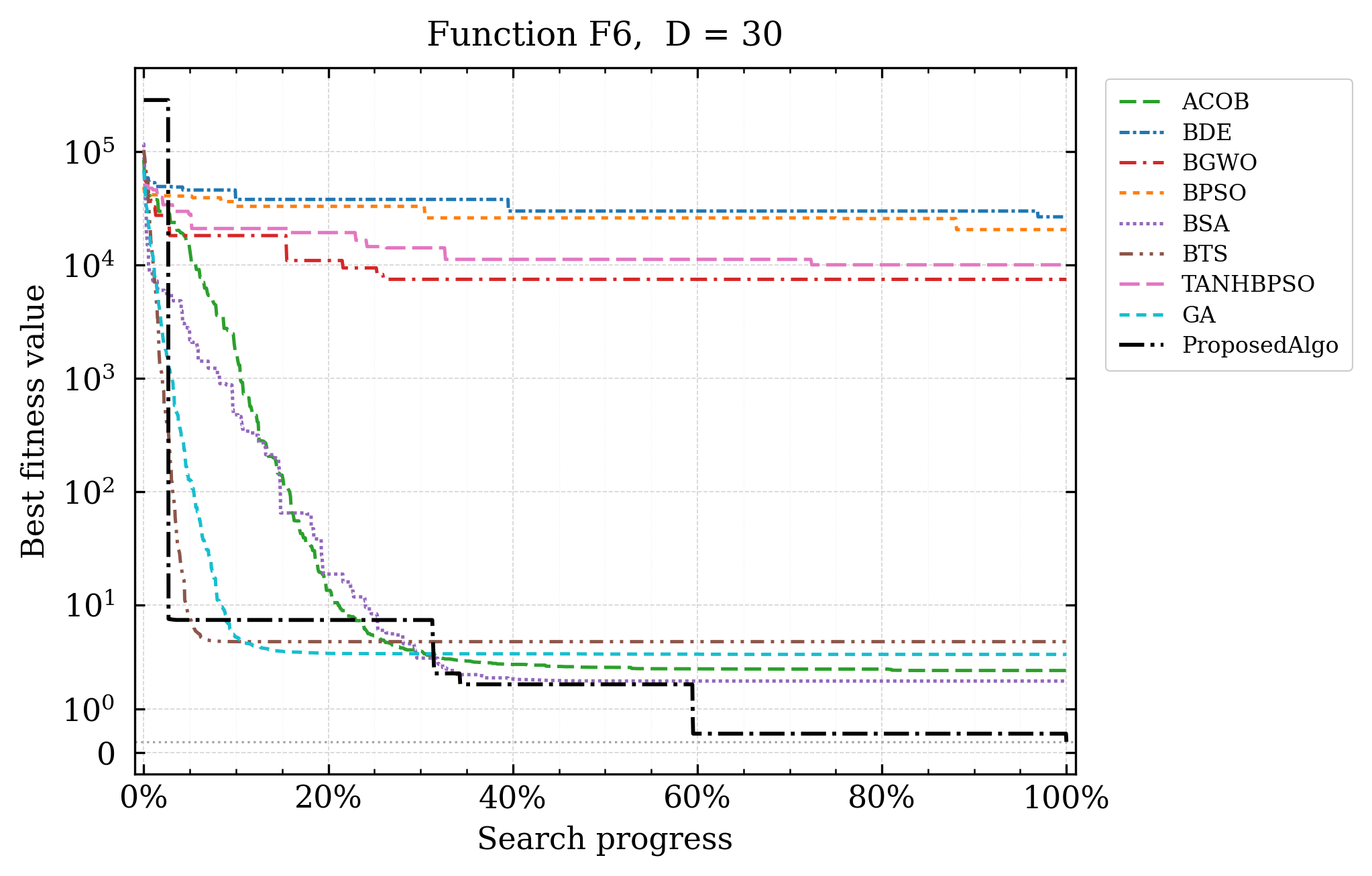} & 
\includegraphics[width=0.23\textwidth]{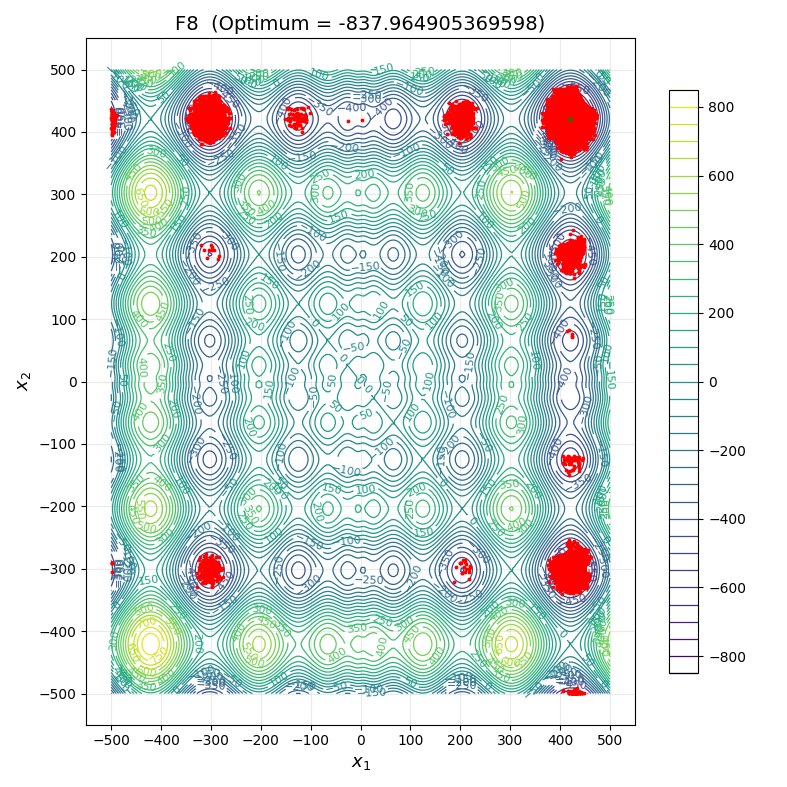} &
\includegraphics[width=0.25\textwidth]{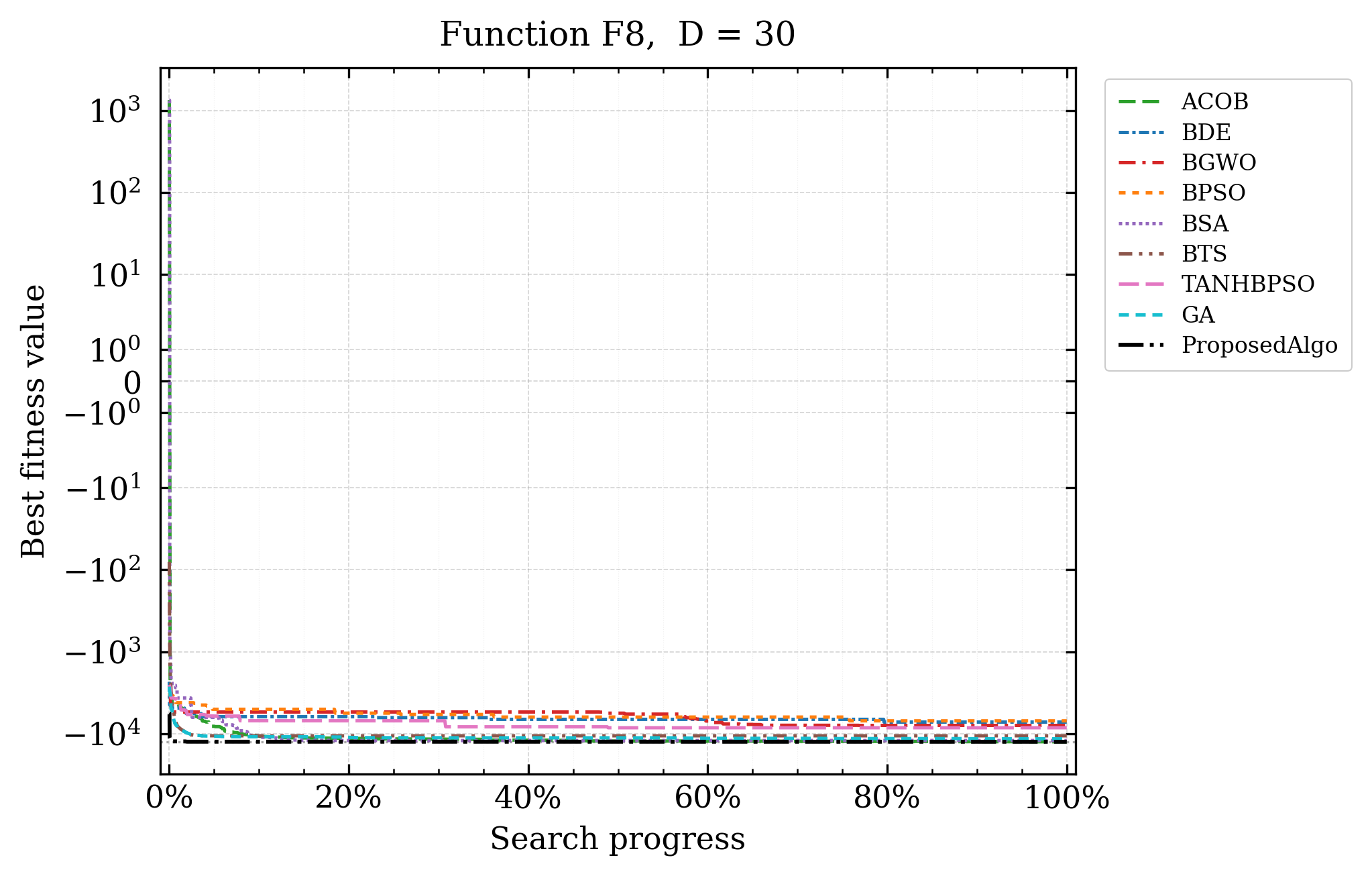} \\

\includegraphics[width=0.23\textwidth]{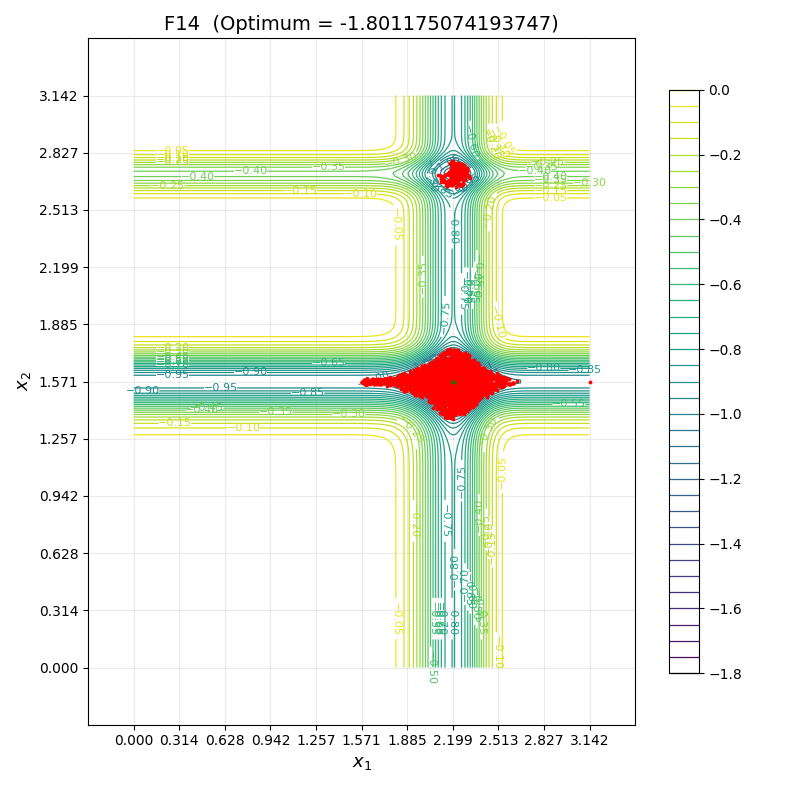} &
\includegraphics[width=0.25\textwidth]{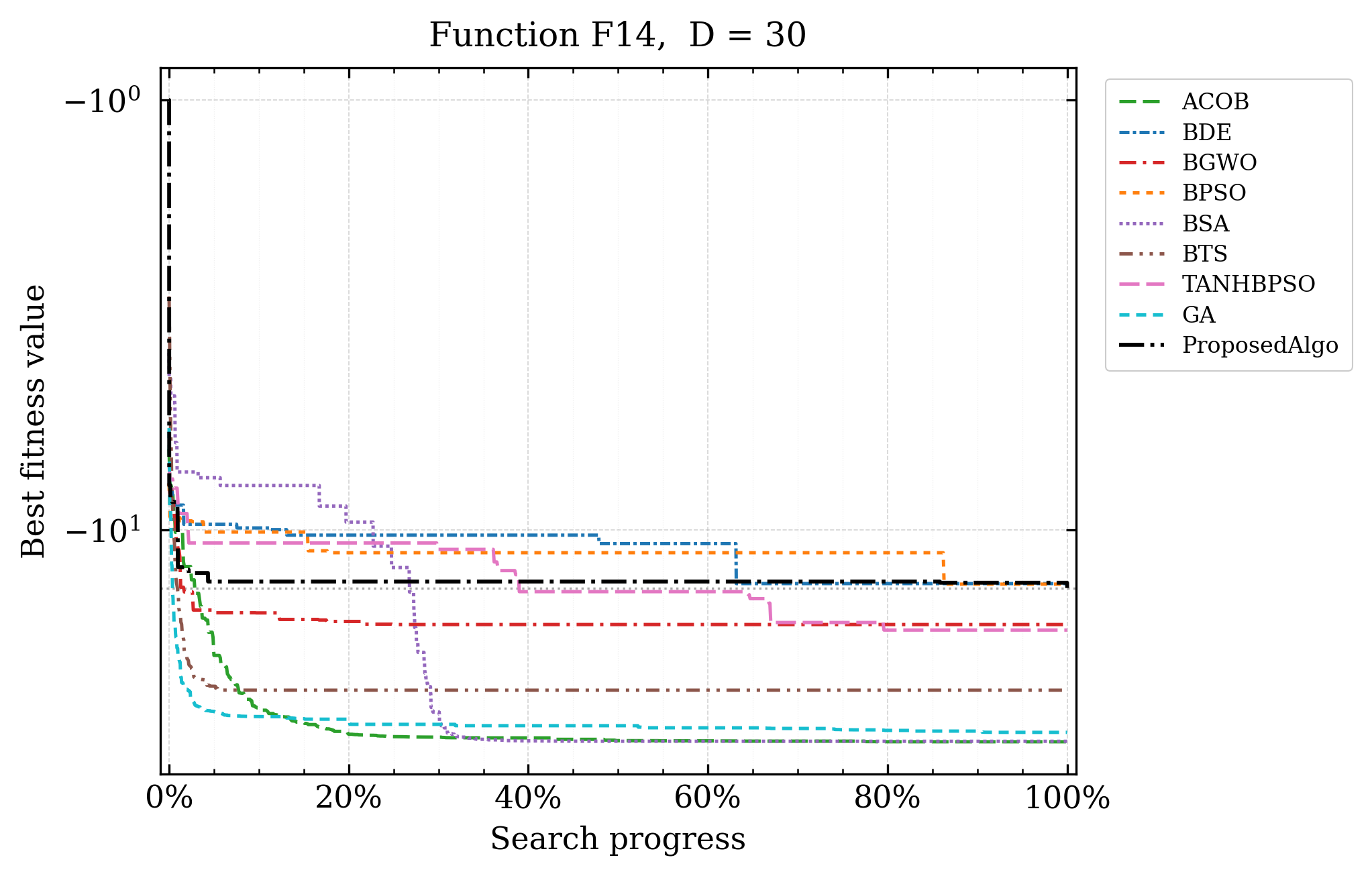} & 
\includegraphics[width=0.23\textwidth]{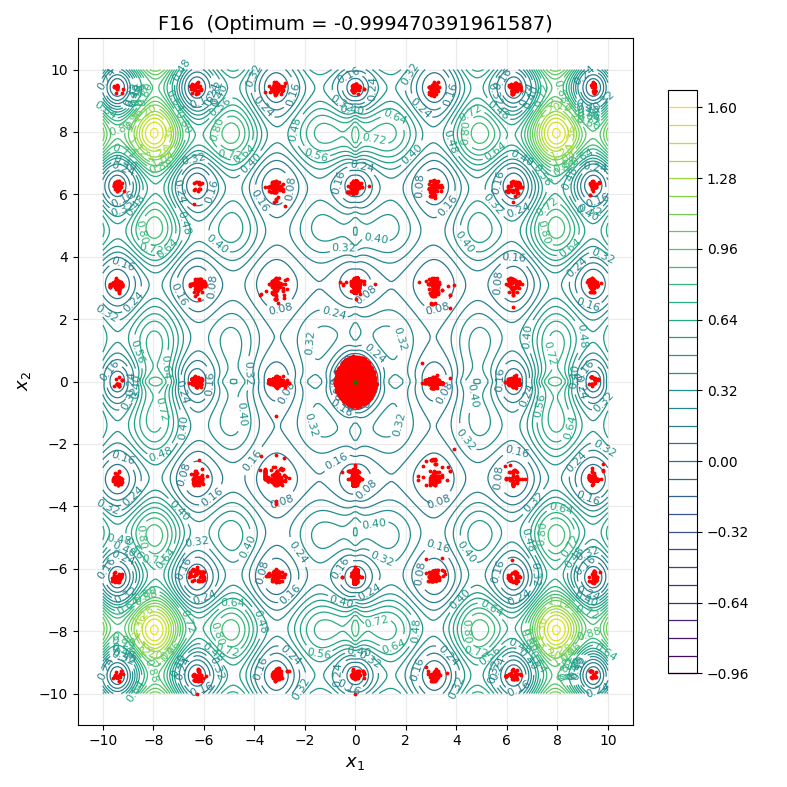} &
\includegraphics[width=0.25\textwidth]{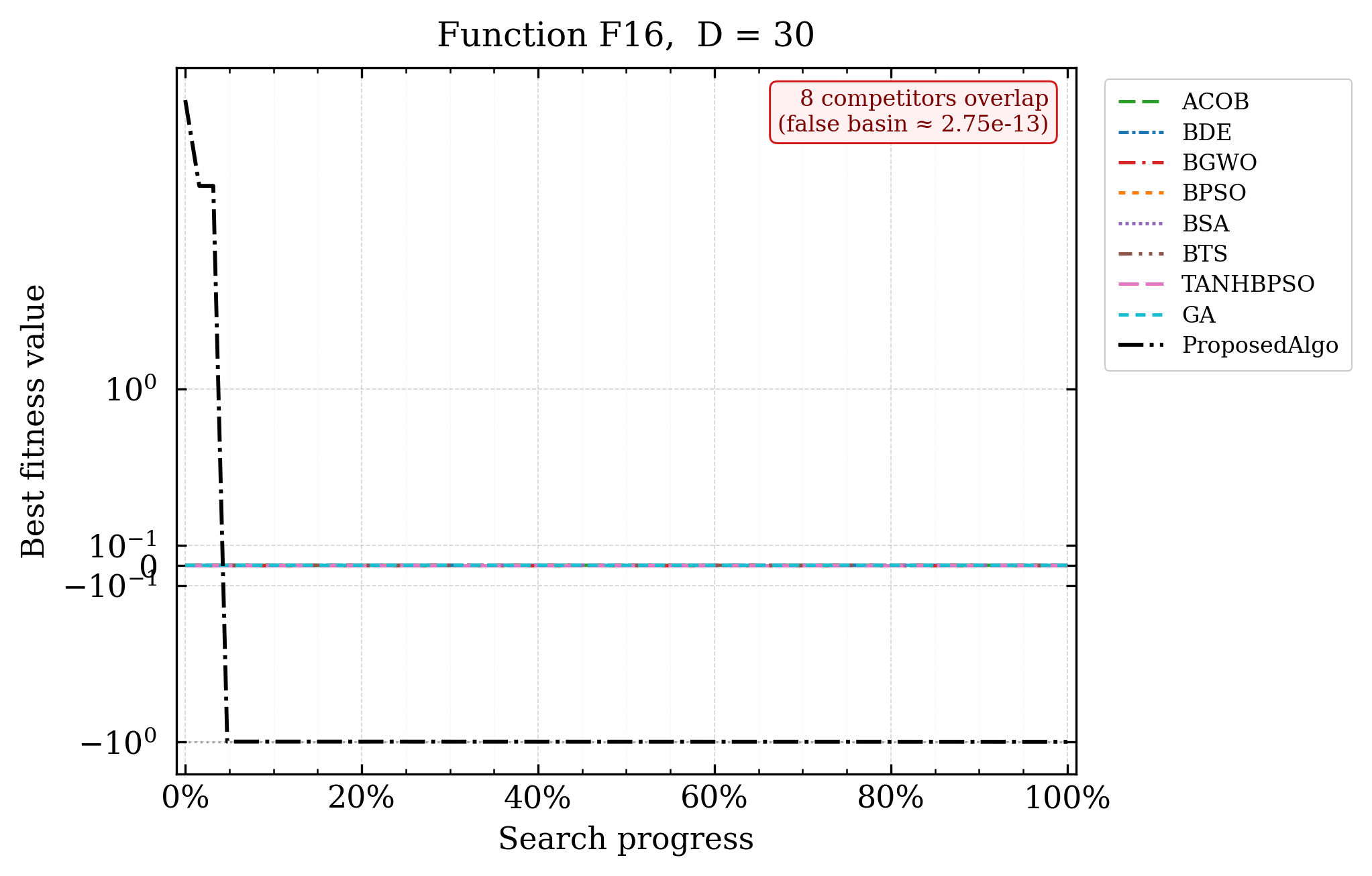} \\

\caption{Visual analysis of the S-LCG algorithm on selected scalable functions from (F1-F16). The columns display the exploitation history of the sampled points and the convergence curves tracking fitness evaluation over time.}
\label{fig:visual_unimodal}

\end{longtable}
\egroup

\textbf{Fixed-Dimension Landscapes (F17--F26):} are built to mimic real-world engineering problems and complicated mathematical structures. These functions are evaluated to determine the ability of the algorithm to function in complex, non-scalable search domains. On functions F17-F22, S-LCG is competitive with the stochastic baseline set by GA. But its structural dominance is displayed on the densely packed Shekel family (F24-F26). Stochastic algorithms spread their evaluations too widely because of the presence of deep, narrow, overlapping minima. S-LCG always finds the deepest basins, while the randomized evolutionary mechanisms are much worse. Table ~\ref{tab:results_mean_F17F26_combined} demonstrates the achieved result following the same statistical comparison, followed previously in the scalable table, to depict convergence and exploitation history, which has been demonstrated in figure ~\ref{fig:visual_fixed}.

\newcolumntype{C}{>{\centering\arraybackslash\scriptsize}p{1.5cm}}
\small
\setlength{\tabcolsep}{2pt}
\renewcommand{\arraystretch}{0.9}
\begin{longtable}{lcCCCCCCCCC}
\toprule
\textbf{Function} & \textbf{Dim} & \multicolumn{1}{c}{\textbf{S-LCG}} & \multicolumn{1}{c}{\textbf{GA}} & \multicolumn{1}{c}{\textbf{BDE}} & \multicolumn{1}{c}{\textbf{BPSO}} & \multicolumn{1}{c}{\textbf{BACO}} & \multicolumn{1}{c}{\textbf{T-BPSO}} & \multicolumn{1}{c}{\textbf{BTS}} & \multicolumn{1}{c}{\textbf{BSA}} & \multicolumn{1}{c}{\textbf{BGWO}} \\
\midrule
\endfirsthead

\bottomrule
\noalign{\vspace{6pt}} 
\caption{Mean Results on Benchmark Functions F17--F26 across dimensions $d \in \{2,3,4,6\}$}\label{tab:results_mean_F17F26_combined}\\
\endlastfoot

F17 & 2 & \phantom{-}9.98e-01\phantom{$^{+}$} & \phantom{-}1.03e+00$^{-}$ & \phantom{-}1.26e+00$^{+}$ & \phantom{-}1.05e+00$^{+}$ & \phantom{-}1.20e+00$^{+}$ & \phantom{-}1.04e+00$^{+}$ & \phantom{-}7.48e+00$^{+}$ & \phantom{-}5.87e+00$^{+}$ & \phantom{-}5.23e+00$^{+}$ \\
F19 & 2 & -1.03e+00\phantom{$^{+}$} & -1.03e+00$^{-}$ & -1.03e+00$^{+}$ & -1.03e+00$^{+}$ & -1.01e+00$^{+}$ & -1.03e+00$^{+}$ & -9.96e-01$^{+}$ & -1.00e+00$^{+}$ & -8.94e-01$^{+}$ \\
F20 & 2 & \phantom{-}3.98e-01\phantom{$^{+}$} & \phantom{-}3.98e-01$^{=}$ & \phantom{-}3.98e-01$^{+}$ & \phantom{-}3.98e-01$^{+}$ & \phantom{-}4.55e-01$^{+}$ & \phantom{-}4.00e-01$^{+}$ & \phantom{-}2.90e+00$^{+}$ & \phantom{-}1.08e+00$^{+}$ & \phantom{-}8.45e-01$^{+}$ \\
F21 & 2 & \phantom{-}3.00e+00\phantom{$^{+}$} & \phantom{-}3.00e+00$^{=}$ & \phantom{-}3.00e+00$^{+}$ & \phantom{-}3.00e+00$^{+}$ & \phantom{-}5.70e+00$^{-}$ & \phantom{-}3.00e+00$^{+}$ & \phantom{-}1.30e+02$^{+}$ & \phantom{-}6.30e+01$^{+}$ & \phantom{-}1.47e+01$^{+}$ \\

F22 & 3 & -3.86e+00\phantom{$^{+}$} & -3.86e+00$^{-}$ & -3.86e+00$^{+}$ & -3.86e+00$^{-}$ & -3.86e+00$^{=}$ & -3.86e+00$^{=}$ & -2.77e+00$^{+}$ & -3.85e+00$^{+}$ & -3.78e+00$^{+}$ \\

F18 & 4 & \phantom{-}3.67e-04\phantom{$^{+}$} & \phantom{-}2.51e-03$^{+}$ & \phantom{-}1.04e-03$^{+}$ & \phantom{-}6.59e-04$^{+}$ & \phantom{-}9.08e-03$^{+}$ & \phantom{-}1.07e-03$^{+}$ & \phantom{-}3.65e-02$^{+}$ & \phantom{-}4.05e-03$^{+}$ & \phantom{-}1.23e-02$^{+}$ \\
F24 & 4 & -1.02e+01\phantom{$^{+}$} & -6.98e+00$^{-}$ & -5.57e+00$^{+}$ & -6.59e+00$^{+}$ & -4.05e+00$^{+}$ & -6.75e+00$^{+}$ & -2.08e+00$^{+}$ & -4.17e+00$^{+}$ & -2.24e+00$^{+}$ \\
F25 & 4 & -1.04e+01\phantom{$^{+}$} & -6.82e+00$^{+}$ & -7.30e+00$^{+}$ & -8.97e+00$^{+}$ & -4.71e+00$^{+}$ & -7.26e+00$^{+}$ & -2.00e+00$^{+}$ & -4.56e+00$^{+}$ & -2.53e+00$^{+}$ \\
F26 & 4 & -1.05e+01\phantom{$^{+}$} & -7.04e+00$^{+}$ & -8.09e+00$^{+}$ & -8.52e+00$^{+}$ & -4.53e+00$^{+}$ & -8.05e+00$^{+}$ & -1.95e+00$^{+}$ & -4.52e+00$^{+}$ & -3.03e+00$^{+}$ \\

F23 & 6 & -3.11e+00\phantom{$^{+}$} & -3.24e+00$^{-}$ & -3.17e+00$^{-}$ & -3.24e+00$^{-}$ & -3.21e+00$^{-}$ & -3.22e+00$^{-}$ & -2.78e+00$^{+}$ & -3.23e+00$^{-}$ & -3.12e+00$^{=}$ \\
\midrule
\multicolumn{2}{l}{\textbf{+/=/-}} & \multicolumn{1}{c}{-} & \multicolumn{1}{c}{3/2/5} & \multicolumn{1}{c}{9/0/1} & \multicolumn{1}{c}{8/0/2} & \multicolumn{1}{c}{7/1/2} & \multicolumn{1}{c}{8/1/1} & \multicolumn{1}{c}{10/0/0} & \multicolumn{1}{c}{9/0/1} & \multicolumn{1}{c}{9/1/0} \\
\end{longtable}

\bgroup
\renewcommand\LTcaptype{figure} 

\begin{longtable}{cccc}

\textbf{Exploitation History} & \textbf{Convergence} & \textbf{Exploitation History} & \textbf{Convergence} \\[2ex]

\includegraphics[width=0.23\textwidth]{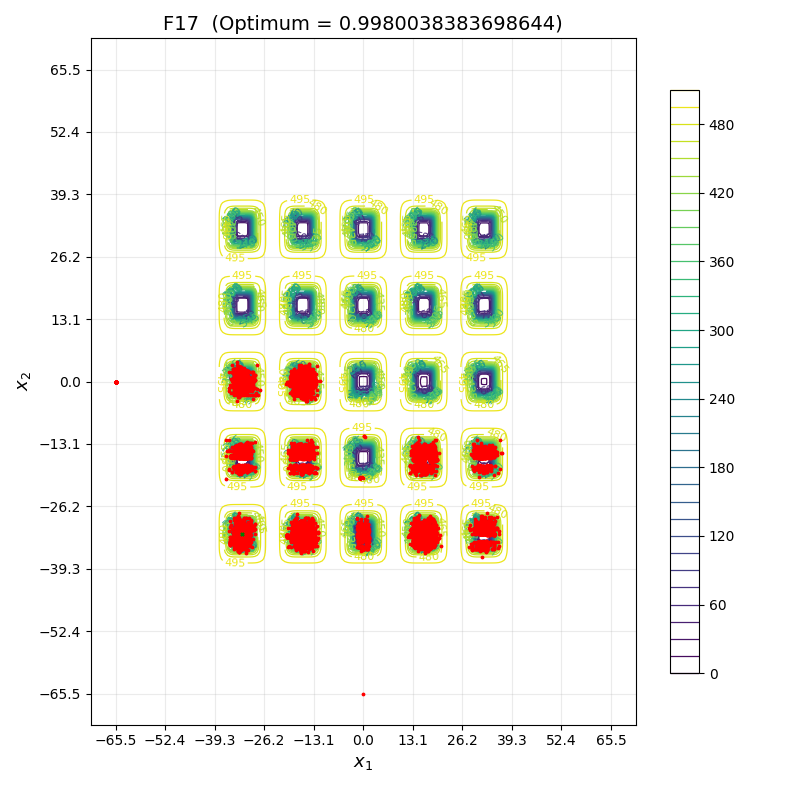} &
\includegraphics[width=0.25\textwidth]{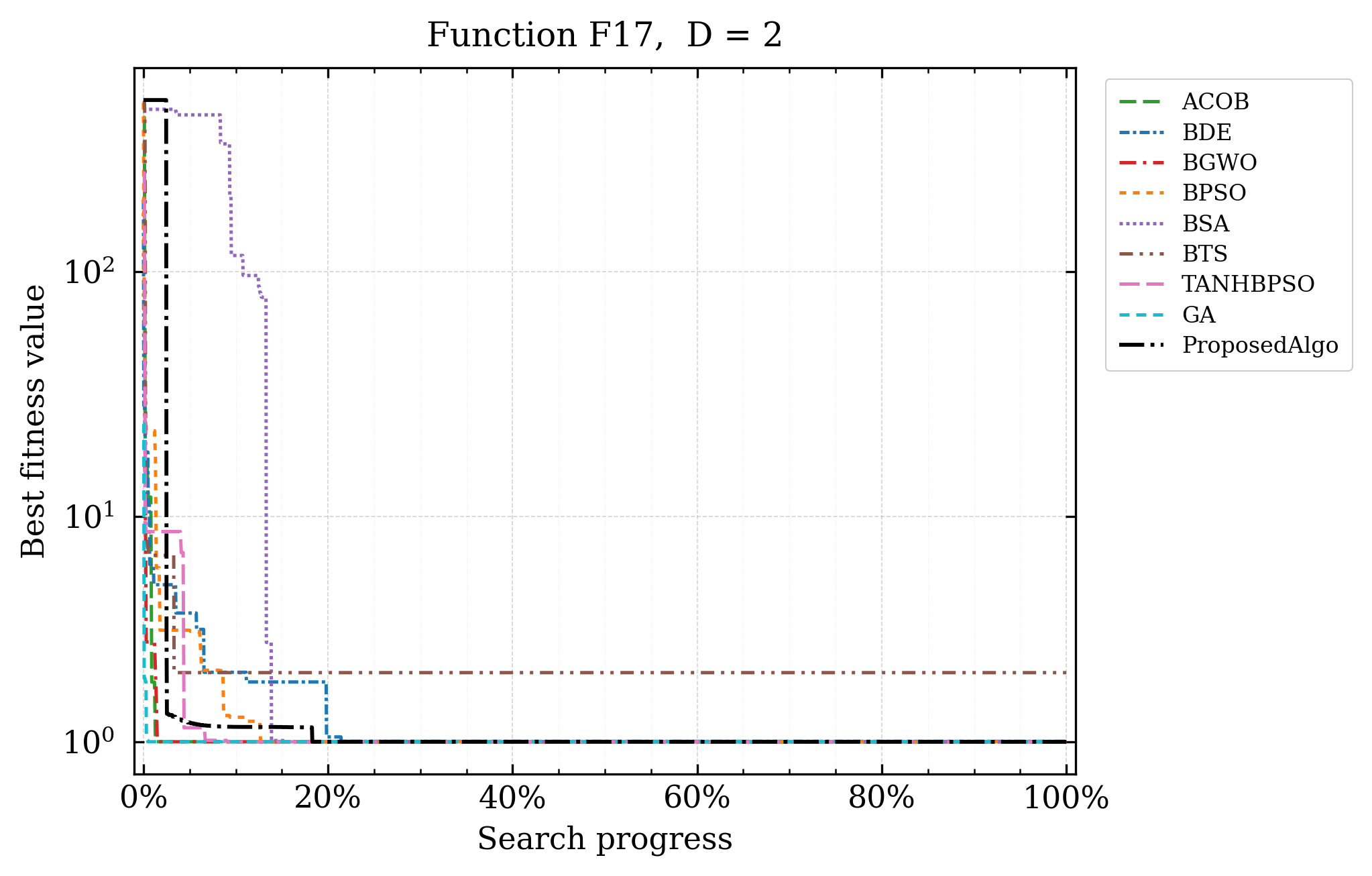} & 
\includegraphics[width=0.23\textwidth]{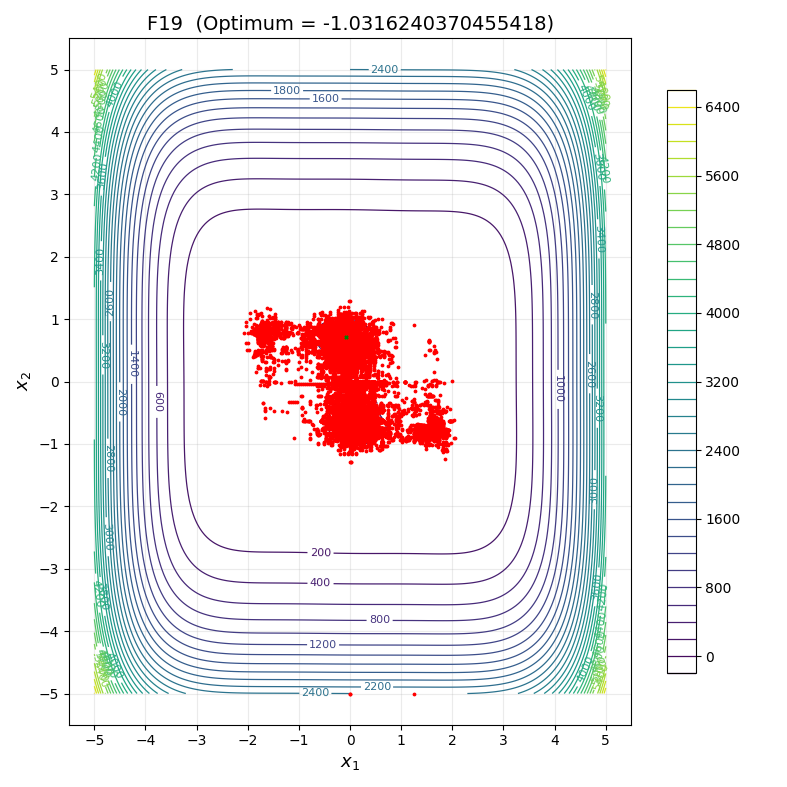} &
\includegraphics[width=0.25\textwidth]{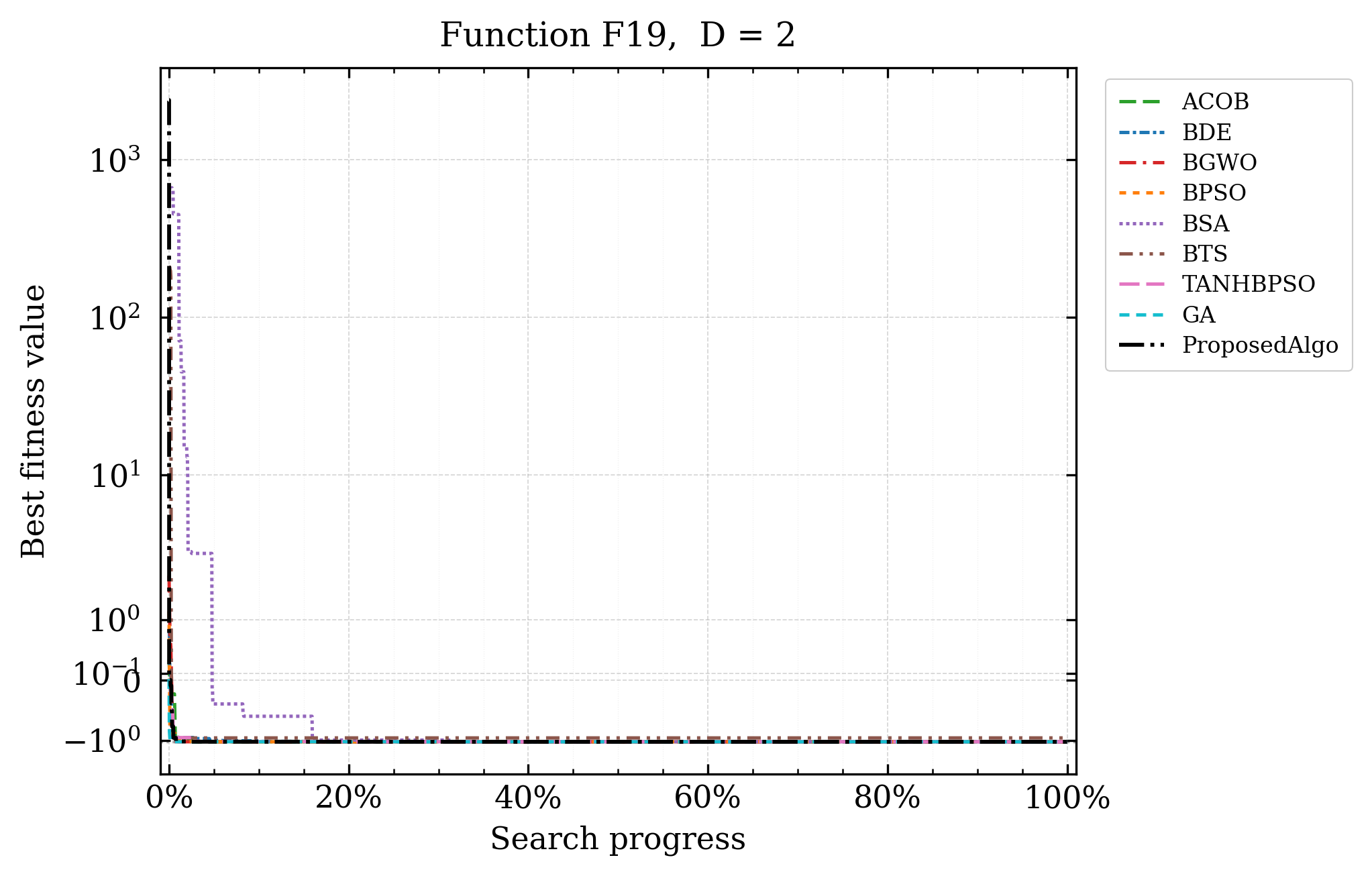} \\[2ex]

\includegraphics[width=0.23\textwidth]{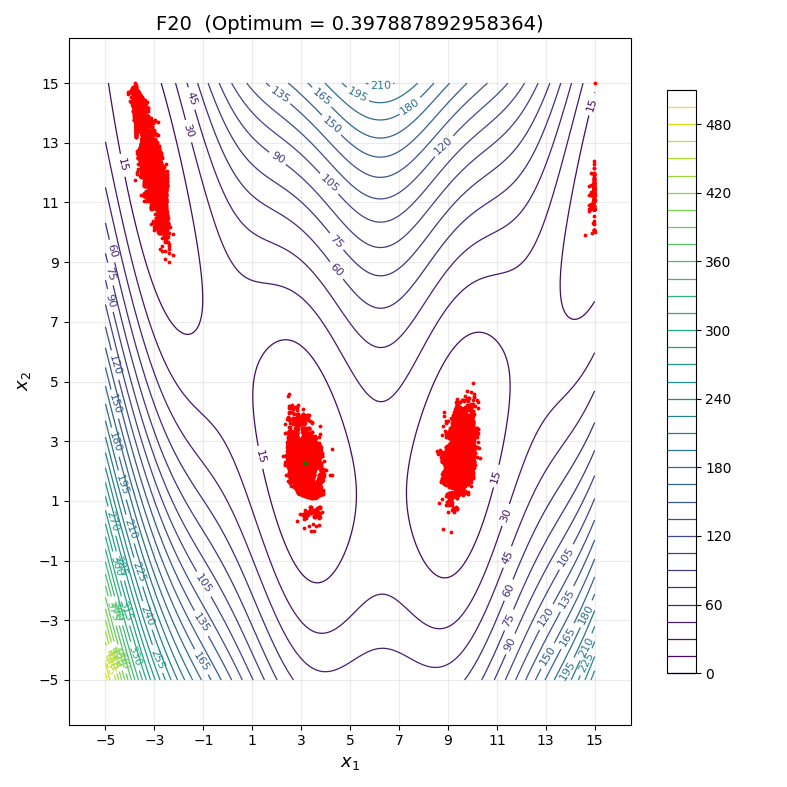} &
\includegraphics[width=0.25\textwidth]{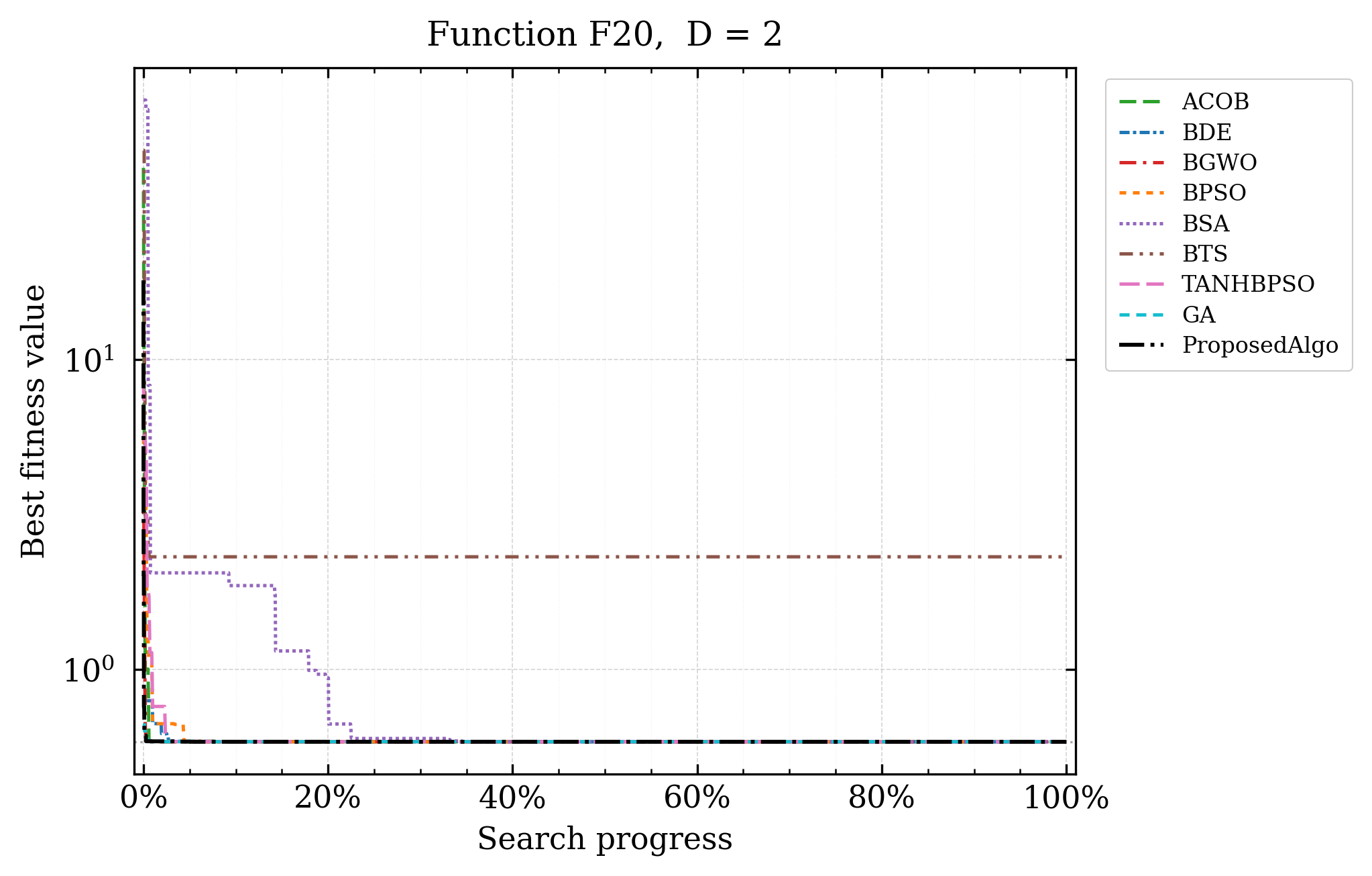} & 
\includegraphics[width=0.23\textwidth]{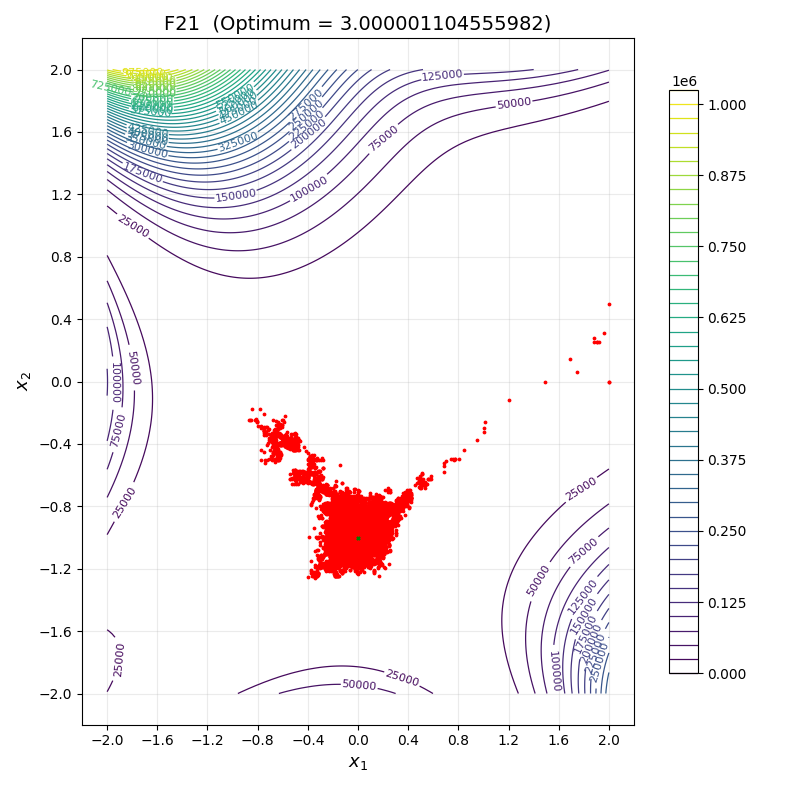} &
\includegraphics[width=0.25\textwidth]{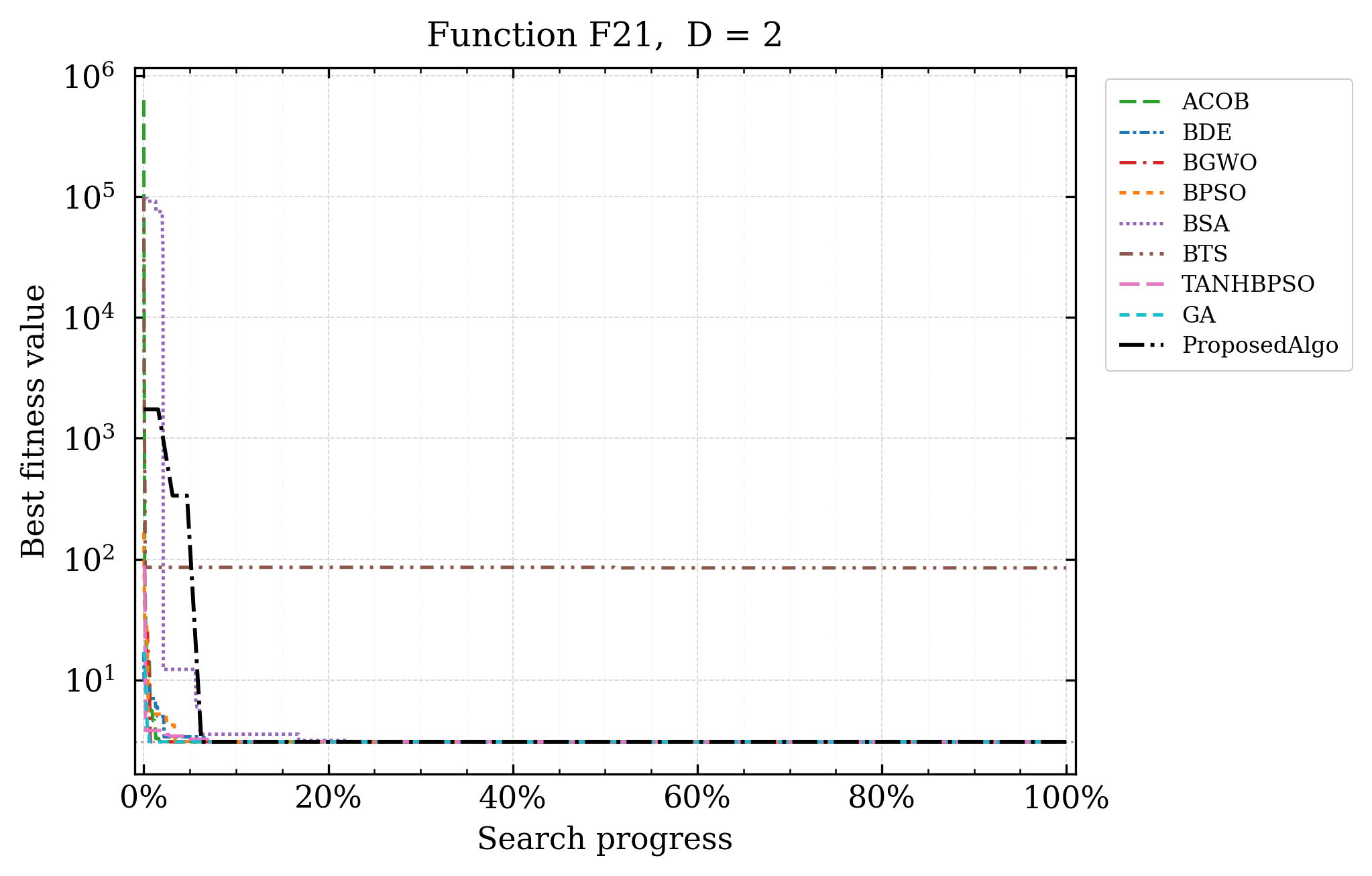} \\[2ex]

\caption{Visual analysis of the S-LCG algorithm on fixed-dimension multimodal functions.}
\label{fig:visual_fixed}

\end{longtable}
\egroup

\subsection{Constrained Engineering Design Problems}\label{sec:engineering}

To verify the practical usefulness of S-LCG, it was applied to three classical constrained engineering problems~\cite{GWO}: Tension/Compression Spring, Welded Beam, and Pressure Vessel design. Infeasible solutions were handled with a strict death penalty ($f = +\infty$).
Summing up, the results prove the S-LCG to be uniquely robust in the context of statistical methods. For the Spring and Welded Beam continuous designs the S-LCG was compared to or got less optimal costs than the competitors. On the other hand, the Pressure Vessel problem presents a mixed variable environment, as both discrete thickness levels and continuous dimensions are tested. Continuous meta-heuristics incur post-hoc rounding penalties that frequently violate constraints. The native bit-splitting encoding of S-LCG assigns exact discrete subsets to thickness variables. So, S-LCG converged to the minimum cost ($6145.06$) reliably and beat all 8 competitors. All results presented in the tables (~\ref{tab:constraint_problem1},~\ref{tab:constraint_problem2},~\ref{tab:constraint_problem3}) while convergebce plot depicted in figures (~\ref{fig:spring_convergence},~\ref{fig:welded_convergence},~\ref{fig:vessel_convergence}) 

\begin{figure}[H]
    \centering
    
    \begin{minipage}[t]{0.55\textwidth}
        \vspace{0pt} 

        \resizebox{\linewidth}{!}{
            \begin{tabular}{lcccccc}
            \toprule
            \multirow{2}{*}{\textbf{Algorithm Name}} & \multicolumn{3}{c}{\textbf{Dimension}} & \multirow{2}{*}{\textbf{Achieved Solution}} & \multirow{2}{*}{\textbf{Mean}} & \multirow{2}{*}{\textbf{STD}} \\ \cmidrule(lr){2-4}
             & \textbf{$x_1$} & \textbf{$x_2$} & \textbf{$x_3$} & & & \\ 
            \midrule
            S-LCG & 0.05435 & 0.42127 &  8.47099 & 1.303e-02 & - & - \\
            BPSO &  0.05279 & 0.38344 & 10.48835 & 1.268e-02 $+$ & 1.31e-02 & 2.96e-04 \\
            BDE &   0.05253 & 0.37499 & 10.41615 & 1.285e-02 $+$ & 1.37e-02 & 4.97e-04 \\
            GA &    0.05333 & 0.39768 &  9.23791 & 1.271e-02 $+$ & 1.67e-02 & 6.04e-03 \\
            T-PSO & 0.05179 & 0.35826 & 11.31447 & 1.280e-02 $+$ & 1.44e-02 & 1.10e-03 \\
            BSA & 0.05547 & 0.45388 & 7.30867 & 1.300e-02 $+$ & 2.38e-02 & 3.36e-02 \\
            BACO & 0.05261 & 0.37919 & 10.12500 & 1.273e-02 $+$ & 4.67e+14 & 5.07e+14 \\
            BGWO & 0.05761 & 0.51250 & 5.88476 & 1.341e-02 $+$ & 2.11e-02 & 8.99e-03 \\
            BTS & 0.05856 & 0.54531 & 5.250001 & 1.356e-02 $+$ & 6.52e-02 & 6.87e-02 \\
            \midrule
            $+/=/-$ & \multicolumn{3}{c}{} & 8/0/0 & & \\ 
            \bottomrule
            \end{tabular}
        }
        \captionof{table}{\footnotesize Achieved result for tension/compression spring problem}
        \label{tab:constraint_problem1}

    \end{minipage}\hfill 
    \begin{minipage}[t]{0.35\textwidth}
        \vspace{0pt} 
        \centering
        
        \includegraphics[width=\linewidth]{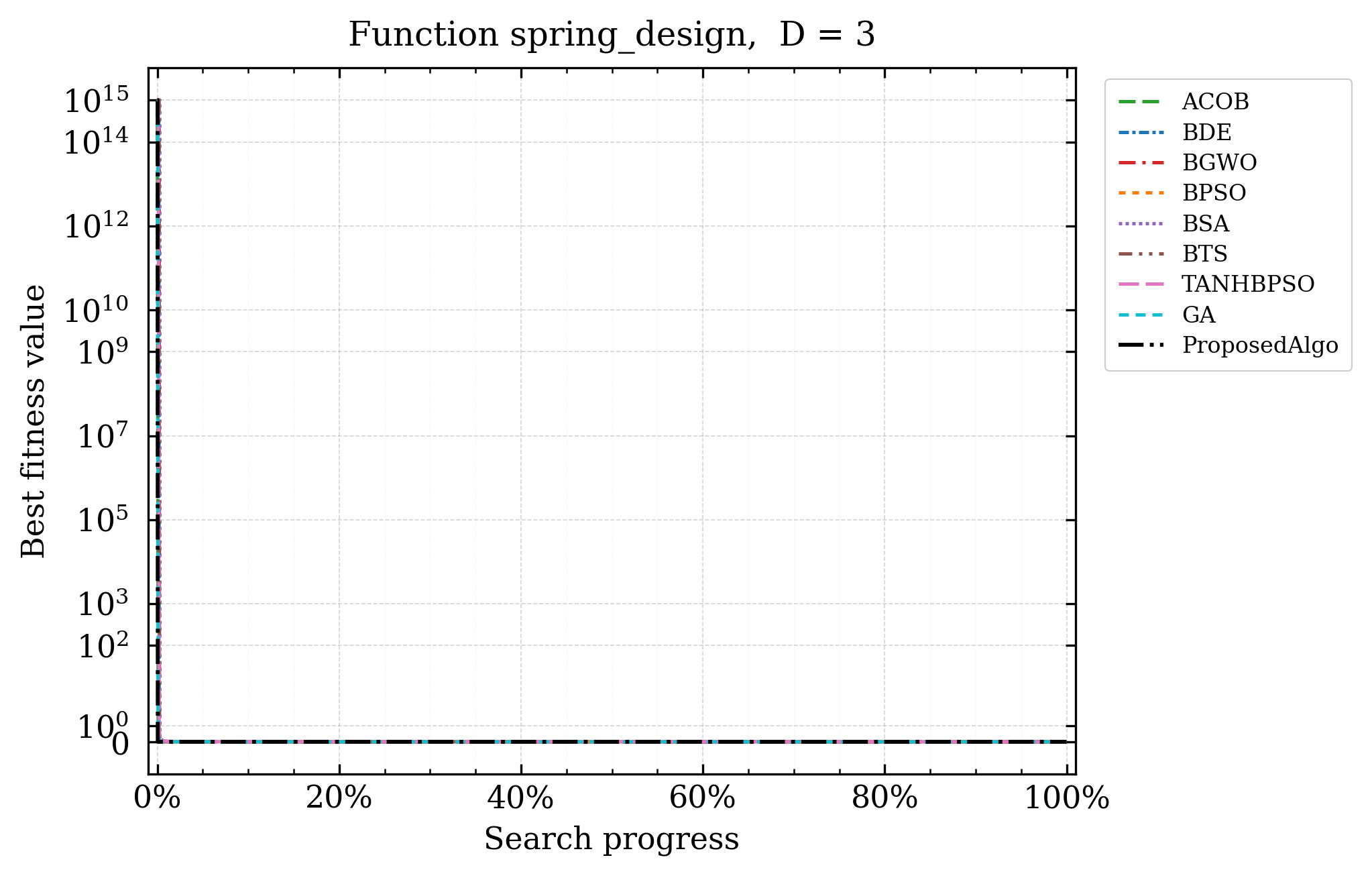}
        
        \captionof{figure}{Convergence curves for the tension/compression spring.}
        \label{fig:spring_convergence}
    \end{minipage}
\end{figure}
\begin{figure}[H]
    \centering
    
    \begin{minipage}[t]{0.55\textwidth}
        \vspace{0pt} 

        \resizebox{\linewidth}{!}{
            \begin{tabular}{lccccccc}
            \toprule
            \multirow{2}{*}{\textbf{Algorithm Name}} & \multicolumn{4}{c}{\textbf{Dimension}} & \multirow{2}{*}{\textbf{Achieved Solution}} & \multirow{2}{*}{\textbf{Mean}} & \multirow{2}{*}{\textbf{STD}} \\ \cmidrule(lr){2-5}
              & \textbf{$x_1$} & \textbf{$x_2$} & \textbf{$x_3$} & \textbf{$x_4$} & & & \\ 
            \midrule
S-LCG & 0.20568 & 3.47837 & 9.03680 & 0.20763 & 1.74036 & - & - \\
BPSO &0.20654&3.68103&9.03752&0.20729&1.76711 $+$ & 1.81693e+00 & 3.61520e-02\\
BDE &0.20191&3.64236&9.11515&0.20702&1.76574 $+$ & 1.84824e+00 & 3.78380e-02\\
T-BPSO &0.19176&3.93923&9.004198&0.20987 &1.79101 $+$& 1.90310e+00 & 7.89416e-02\\
GA &0.21875&3.30976&8.76341&0.21875&1.77144 $+$ & 2.14566e+00 & 3.58402e-01\\
BACO &0.21689&3.34844&8.76371&0.21875 &1.77406 $+$& 2.44607e+00 & 7.09917e-01\\
BGWO &0.21875&3.50312&8.76250&0.21880&1.79970 $+$& 2.92176e+00 & 6.82489e-01\\
BSA &0.21874&3.30978&8.76371&0.21875&1.77144 $+$& 7.33333e+06 & 4.49776e+06\\
BTS &0.34202&2.19764&7.53467&0.34202&2.29220 $+$& 6.00000e+06 & 4.98273e+06\\
\midrule
$+/=/-$ & \multicolumn{4}{c}{} & 8/0/0 & & \\ 
\bottomrule
\end{tabular}
        }
        \captionof{table}{\footnotesize Achieved result for welded beam design problem.}
        \label{tab:constraint_problem2}

    \end{minipage}\hfill 
    \begin{minipage}[t]{0.35\textwidth}
        \vspace{0pt} 
        \centering
        \includegraphics[width=\linewidth]{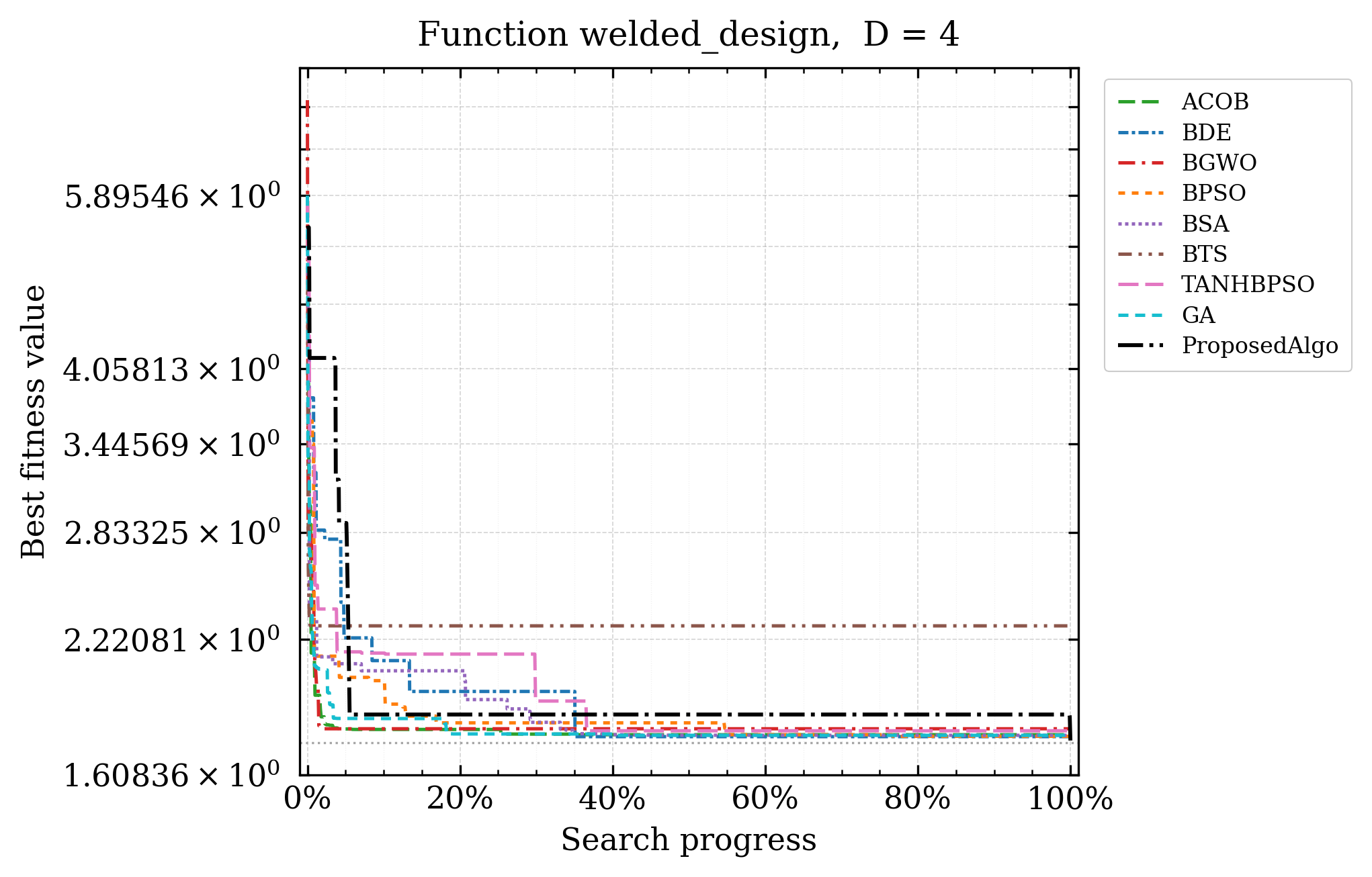}      
        \captionof{figure}{Convergence curves for the welded beam design problem.}
        \label{fig:welded_convergence}
    \end{minipage}
\end{figure}

\begin{figure}[H]
    \centering
    
    \begin{minipage}[t]{0.55\textwidth}
        \vspace{0pt} 

        \resizebox{\linewidth}{!}{
\begin{tabular}{lccccccc}
\toprule
\multirow{2}{*}{\textbf{Algorithm Name}} & \multicolumn{4}{c}{\textbf{Dimension}} & \multirow{2}{*}{\textbf{Achieved Cost}} & \multirow{2}{*}{\textbf{Mean}} & \multirow{2}{*}{\textbf{STD}} \\ \cmidrule(lr){2-5}
 & $\textbf{$x_1$}$ & $\textbf{$x_2$}$ & \textbf{$x_3$} & \textbf{$x_4$} & & & \\ 
\midrule
S-LCG & 0.8125 & 0.4375 &  41.45648 & 185.00077 & 6145.06920 & - & - \\

BDE &  0.8125 & 0.4375 & 42.08664 & 186.12366 & 6279.47710 $+$ & 6.463972e+03 & 1.43600e+02  \\

BPSO &   0.875 & 0.4375 & 44.84471 & 146.39341 & 6175.77147 $+$ & 6.758928e+03 & 3.83635e+02  \\

GA &    0.875 & 0.4375 & 44.17487 & 152.50006 & 6227.52031 $+$ & 6.83492e+03 & 3.973271e+02  \\

T-PSO & 0.875 & 0.4375 & 43.95289 & 155.85205  & 6278.85570 $+$ & 6.93534e+03&3.98748e+02 \\

BACO & 0.875 & 0.4375 & 44.17541  & 152.50006 & 6227.61108 $+$ &7.350988e+03 & 8.04005e+02  \\

BGWO & 0.875& 0.5625&44.18701&152.50006 & 6663.51329 $+$ & 2.71939e+03 & 6.66351e+03  \\

BTS & 1.375 & 0.625 & 63.71111 & 16.68258 & 7910.20035 $+$ &  3.98748e+02 & 6.27885e+03 \\

BSA & 1.0625 & 0.5625 & 54.27574 & 68.148678 & 6851.63959 $+$ &  3.48195e+04 &6.85164e+03  \\

\midrule
$+/=/-$ & \multicolumn{4}{c}{} & 8/0/0 & & \\ 
\bottomrule
\end{tabular}
        }
        \captionof{table}{\footnotesize Achieved result for the pressure vessel design problem.}
        \label{tab:constraint_problem3}

    \end{minipage}\hfill 
    \begin{minipage}[t]{0.35\textwidth}
        \vspace{0pt} 
        \centering
        \includegraphics[width=\linewidth]{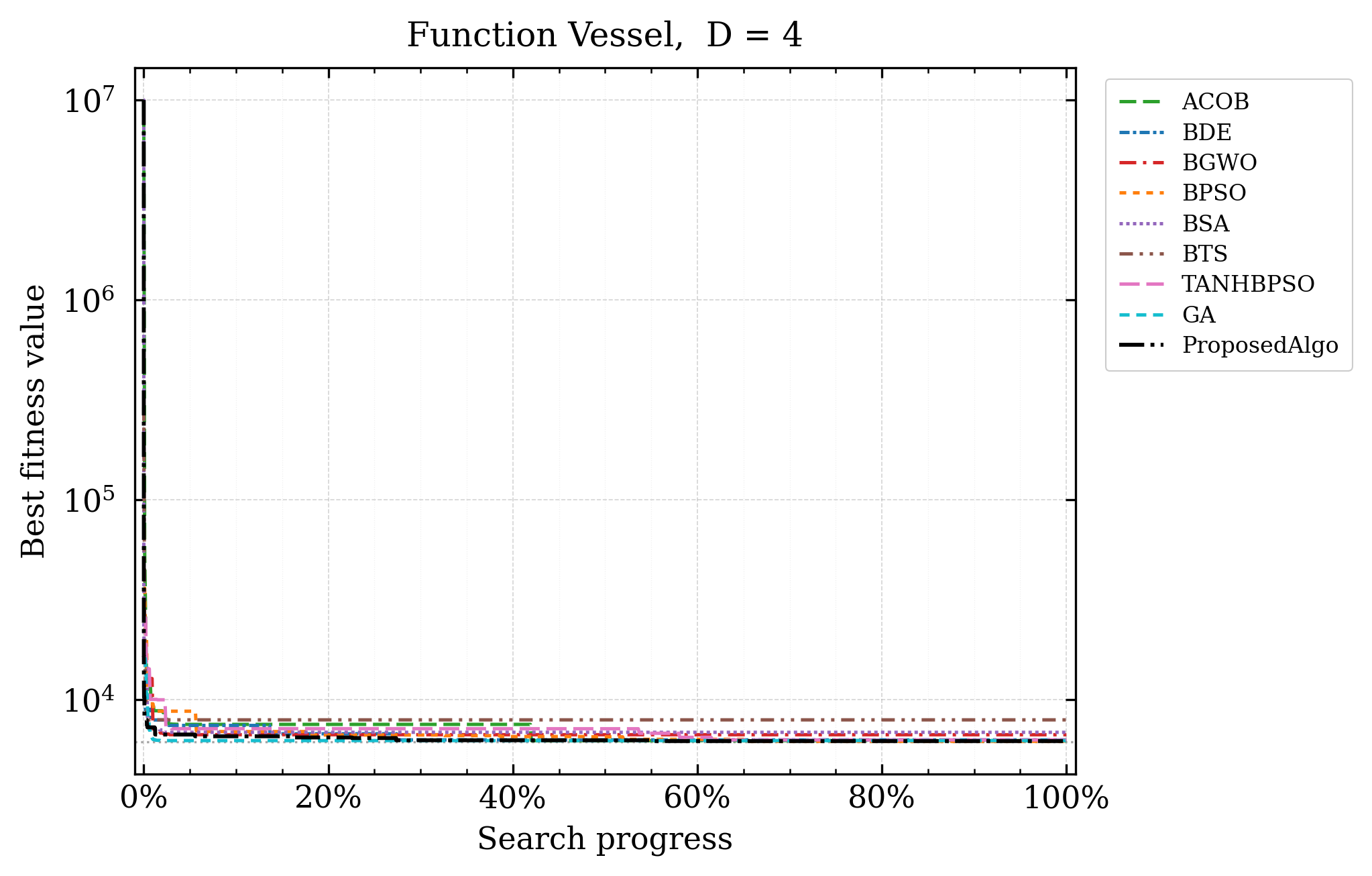}      
        \captionof{figure}{Convergence curves for the pressure vessel design problem.}
        \label{fig:vessel_convergence}
    \end{minipage}
\end{figure}

\subsection{Statistical and Efficiency Analysis}\label{sec:stats_efficiency}

Statistical validation is conducted using the Friedman mean rank test and the one-sample Wilcoxon signed-rank test because S-LCG yielded a single solution due to its deterministic design, compared with the median across 30 runs of the opposing algorithms with ($\alpha = 0.05$). Finally, to check the significance of the proposed algorithm, the Holm post hoc test was used.

\textbf{Friedman Rankings:} S-LCG and GA are the only algorithms to consistently occupy the top two tiers. Crucially, S-LCG's mean rank improves progressively with dimensionality: from $1.9$ at $d=2$ to an optimal $1.5$ at $d=30$. GA remains stagnant, while the remaining binary swarm and evolutionary methods degrade sharply.  Bold signifies the optimal (lowest) rank for each dimension. S-LCG and GA are the sole algorithms that consistently secure the top two places. as it is elaborated in the following table~\ref{tab:friedman_all}.


\begin{table}[H]
\small
\centering
\footnotesize
\setlength{\tabcolsep}{4pt}
\renewcommand{\arraystretch}{0.9}
\begin{tabular}{@{}lrrrrrrrrr@{}}
\toprule
Algorithm & $d=2$ & $d=3$ & $d=4$ & $d=5$ & $d=6$ & $d=10$ & $d=20$ & $d=30$ & all D \\
\midrule
S-LCG & \textbf{1.90} & \textbf{2.29} & \textbf{1.85} & \textbf{2.16} & 2.38 & \textbf{2.06} & \textbf{1.78} & \textbf{1.50} & \textbf{1.99} \\
GA & 2.35 & 2.88 & 2.88 & 2.16 & \textbf{2.03} & 2.44 & 2.72 & 2.81 & 2.54 \\
BACO & 4.92 & 4.62 & 4.67 & 3.72 & 3.09 & 2.94 & 2.66 & 3.00 & 3.77 \\
BSA & 7.03 & 5.26 & 5.20 & 4.72 & 4.26 & 3.69 & 3.91 & 3.69 & 4.80 \\
BPSO & 4.75 & 3.29 & 3.65 & 4.50 & 5.06 & 6.19 & 7.38 & 7.56 & 5.22 \\
T-BPSO & 5.20 & 6.18 & 5.95 & 7.00 & 6.59 & 6.56 & 6.19 & 6.25 & 6.20 \\
BTS & 6.90 & 7.18 & 7.50 & 6.38 & 6.47 & 5.19 & 4.94 & 4.81 & 6.24 \\
BDE & 3.70 & 4.82 & 5.00 & 5.94 & 7.00 & 7.94 & 8.44 & 8.69 & 6.31 \\
BGWO & 8.25 & 8.47 & 8.30 & 8.44 & 8.12 & 8.00 & 7.00 & 6.69 & 7.93 \\

\bottomrule
\end{tabular}
\caption{Friedman mean rankings for all dimensions.}
\label{tab:friedman_all}
\end{table}

\textbf{The One-Sample Wilcoxon W/T/L:} Across all dimensions used to check the performance of the proposed algorithm on each evaluated function against each competitor used, where S-LCG achieves a 52\% win rate against GA (the strongest competitor) and overwhelming dominance against the rest: a 91\% win rate against BDE, 93\% against T-BPSO, and 95\% against BGWO. Table~\ref{tab:wtl_all} demonstrates overall performance, while figure~\ref{fig:wtl} shows the comparison with reference to dimension.

\begin{figure}[H]
    \centering
    
    \begin{minipage}[H]{0.40\textwidth}
        \vspace{0pt} 
        
        \footnotesize
        \centering
        \setlength{\tabcolsep}{4pt}
        \renewcommand{\arraystretch}{0.9}
        
        \begin{tabular}{@{}lcccc@{}}
        \toprule
        vs.\ Algorithm & Total $+$ & Total $=$ & Total $-$ & Win rate \\
        \midrule
        GA         & 72      & 39      & 27      & 52\% \\
        ACOB       & 94      & 28      & 16      & 68\% \\
        BSA        & 96      & 25      & 17      & 70\% \\
        BTS        & 116     & 11      & 11      & 84\% \\
        BDE        & 126     & 3       & 9       & 91\% \\
        BPSO       & 129     & 0       & 9       & 93\% \\
        T-BPSO     & 128     & 3       & 7       & 93\% \\
        BGWO       & 131     & 2       & 5       & 95\% \\
        \bottomrule
        \end{tabular}
        
        \captionof{table}{Cumulative $+/=/-$ record of S-LCG versus each competitor over all combined dimensions}
        \label{tab:wtl_all}
        
    \end{minipage}\hfill 
    \begin{minipage}[ht]{0.50\textwidth}
        \vspace{0pt} 
        \centering
        \includegraphics[width=\linewidth,height=0.20\textheight]{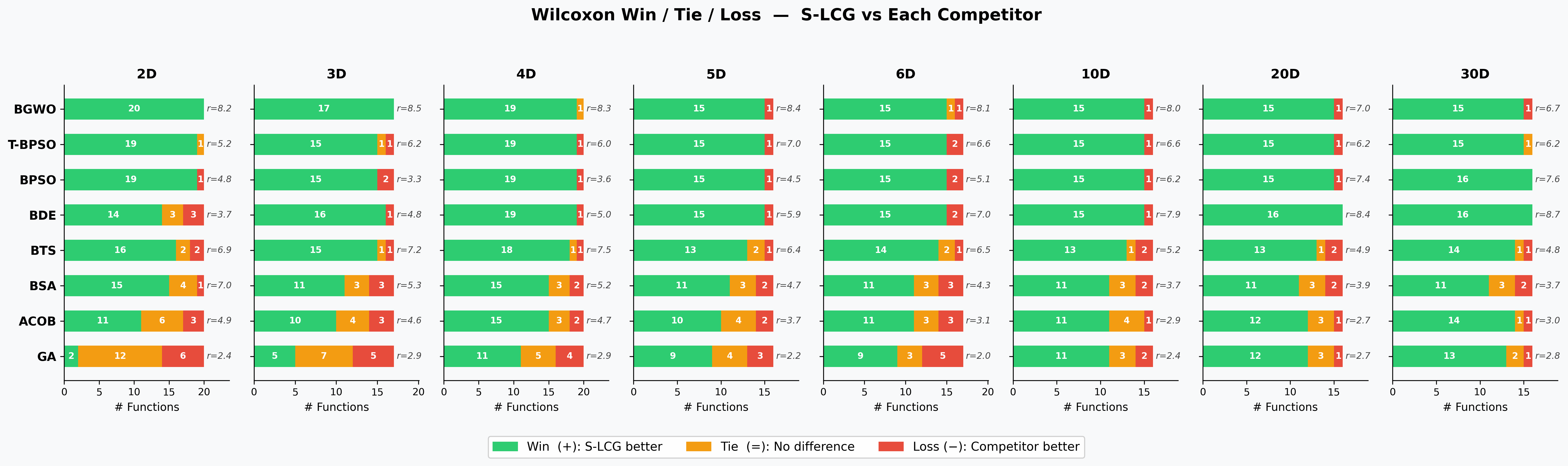}      
        
        \captionof{figure}{WTL comparison across dimensions.}
        \label{fig:wtl}
    \end{minipage}
\end{figure}

\textbf{The Holm Post Hoc Test:} used to evaluate the actual significance of the proposed algorithm against the competitor, where the Critical Difference (CD) Figure ~\ref{fig:CD} shows that the nearest competitor is GA, and table ~\ref{tab:holm_posthoc} it's show the calculation of Holm for 138 evaluated functions. 
\begin{figure}[H]
    \centering
    \includegraphics[width=0.7\linewidth]{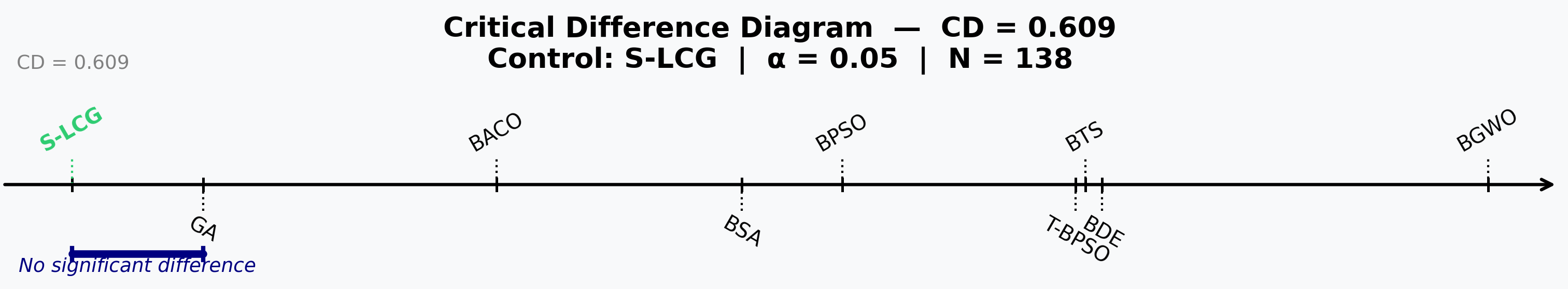}
    \caption{Critical Difference Diagram.}
    \label{fig:CD}
\end{figure}

\begin{table}[H]
\centering

\begin{tabular}{lcccccc}
\hline
\textbf{Competitor} & \textbf{Rank S-LCG} & \textbf{Rank Comp.} & \textbf{Z} & \textbf{$P_{raw}$} & \textbf{$P_{holm}$} & \textbf{Decision} \\
\hline
BSA     & 1.99 & 4.80 & 9.0402  & $< 0.0001$ & $< 0.0001$ & Significant \\
BPSO    & 1.99 & 5.22 & 10.3914 & $< 0.0001$ & $< 0.0001$ & Significant \\
T-BPSO  & 1.99 & 6.20 & 13.5442 & $< 0.0001$ & $< 0.0001$ & Significant \\
BTS     & 1.99 & 6.24 & 13.6729 & $< 0.0001$ & $< 0.0001$ & Significant \\
BDE     & 1.99 & 6.31 & 13.8981 & $< 0.0001$ & $< 0.0001$ & Significant \\
BGWO    & 1.99 & 7.93 & 19.1098 & $< 0.0001$ & $< 0.0001$ & Significant \\
BACO    & 1.99 & 3.77 & 5.7265  & $< 0.0001$ & $< 0.0001$ & Significant \\
GA      & 1.99 & 2.54 & 1.7694  & 0.0768     & 0.0768     & Not significant \\
\hline
\end{tabular}
\caption{Holm's Post-hoc Test Results -- Control: S-LCG ($\alpha = 0.05$, $N = 138$ functions, $k = 9$ algorithms)}
\label{tab:holm_posthoc}
\end{table}

\textbf{Computational Efficiency:} 
The increasing superiority of S-LCG in higher dimensions directly validates the mathematical propositions from Section~\ref{sec:theory}. As spatial volume expands exponentially, population-based methods suffer from convergence-induced redundancy, squandering evaluations. By maintaining an algebraically guaranteed constant information acquisition rate, S-LCG incurs a dimension-independent computational cost (evaluating roughly 35,000 to 50,000 generators across all problems, as detailed in the Supplementary Material). Furthermore, S-LCG circumvents the overhead of population sorting, velocity updates, and PRNG calls, operating with an absolute $O(d)$ memory complexity.

\section{Conclusions and Future Directions}\label{sec:conclusion}

    The S-LCG algorithm is introduced as a deterministic meta-heuristic optimisation algorithm. This means that S-LCG can be fully reproduced without statistical averaging. The method achieves search diversification by using algebraic structures, such as disjoint orbits and modular arithmetic, rather than stochastic operators. It employs a two-level nested loop architecture, including an adaptive outer loop for seed navigation and an exhaustive inner loop for $S$-LCG cycle assessment, ensuring that function evaluation is never redundant and that information is always gathered at a constant rate. Moreover, S-LCG requires only one sensitive parameter ($\Delta_{\text{step}}$), unlike other population-based meta-hueuristic methods, which require more than one. The algorithm also implicitly optimises a structurally smoother surrogate function $g(\alpha)$ and uses bit-splitting encoding to mitigate Marsaglia's effect, making it very efficient for expensive functions.

    The algorithm was evaluated against eight binary optimization methods (GA, ACOB, BDE, BGWO, BPSO, BSA, BTS, T-BPSO) and over 26 benchmark functions with dimensions ranging from $d = 2$ to $d = 30$. In the majority of instances, the S-LCG achieved outcomes within $1\%$ of the established continuous optimum. On the deceptive Schwefel function (F8), S-LCG performed particularly well, reaching within $0.01\%$ of the global optimum across all dimensions. This was achievable due to the algorithm's guaranteed zero-redundancy coverage, which ensures sampling within the basin of attraction of the isolated global optimum despite the terrain's difficulty. On Rastrigin (F9), which has around $10^d$ local minima, S-LCG achieved results almost as good as those of a machine even at $d = 30$. The Friedman ranking analysis revealed distinct performance, ranking highest across all tested dimensions. This substantiates the notion that the zero-redundancy quality becomes increasingly advantageous as the search space expands exponentially.

    Future research will focus primarily on five principal domains, namely formulating a formal convergence theory by connecting the coverage fraction $\rho(K)$ with the probability of attaining the global optimum's basin of attraction, executing adaptive $\Delta$ schemes that modify step sizes according to the local autocorrelation of the surrogate function $g(\alpha)$, utilizing the disjoint orbit structure for communication-free parallelization, facilitating linear scalability across $P$ workers, and broadening the framework to encompass multi-objective and constrained optimization, while integrating S-LCG's zero-redundancy global sweep with local refinement techniques such as Nelder-Mead or L-BFGS-B.

\newpage


    \bibliographystyle{unsrt}  
\bibliography{references}  

\end{document}